\title[Fat minors in finitely presented groups]{Fat minors in finitely presented groups}
\author{Joseph Paul MacManus}
\date{20th August, 2024}
\address{Mathematical Institute,  University of Oxford, Oxford, OX2 6GG, UK}
\email{macmanus@maths.ox.ac.uk}
\DeclareMathOperator{\diam}{diam}
\DeclareMathOperator{\length}{length}
\DeclareMathOperator{\dHaus}{Haus}
\DeclareMathOperator{\dist}{d}
\DeclareMathOperator{\Aut}{Aut}
\newcommand{\R}{\mathbb{R}}
\newcommand{\Z}{\mathbb{Z}}
\newcommand{\N}{\mathbb{N}}
\newcommand{\into}{\hookrightarrow}
\newcommand{\actson}{\curvearrowright}
\newcommand{\pres}[2]{\langle #1 \ ; \ #2 \rangle}
\newtheorem{theorem}{Theorem}[section]
\newtheorem{alphtheorem}{Theorem}
\newtheorem{alphcor}[alphtheorem]{Corollary}
\newtheorem*{theorem*}{Theorem}
\newtheorem*{conjecture*}{Conjecture}
\newtheorem*{falseconjecture*}{False Conjecture}
\newtheorem{proposition}[theorem]{Proposition}
\newtheorem{lemma}[theorem]{Lemma}
\newtheorem{corollary}[theorem]{Corollary}
\newtheorem{conjecture}[theorem]{Conjecture}
\theoremstyle{definition}
\newtheorem{definition}[theorem]{Definition}
\newtheorem{remark}[theorem]{Remark}
\newcommand{\myitem}[1]{%
\item[#1]\protected@edef\@currentlabel{#1}%
}
\begin{document}

\begin{abstract}
    We show that a finitely presented group virtually admits a planar Cayley graph if and only if it is asymptotically minor-excluded, partially answering a conjecture of Georgakopoulos and Papasoglu in the affirmative.     
\end{abstract}

\maketitle



\section{Introduction}

Given graphs $\Gamma$ and $H$, recall that $\Gamma$ is said to contain an $H$-\textit{minor} if $H$ can be obtained by contracting edges of a subgraph of $\Gamma$. The idea of a minor is one of the most fundamental and important concepts in all of graph theory, the highlight of their study being the famous Robertson--Seymour `Graph Minors Project' \cite{robertson1983graph}. 

In the realm of geometric group theory, there has been some recent work in the direction of understanding when the minors present in a Cayley graph affect the structure of the group. For example, an infinite graph $\Gamma$ is said to be \textit{minor-excluded} if there exists a finite graph which is not a minor of $\Gamma$, and a theorem of Khukhro \cite{khukhro2023characterisation} states that \textbf{every} Cayley graph of a finitely generated group $G$ is minor-excluded if and only if $G$ is virtually free.
In a similar vein, theorems of Esperet, Giocanti, Legrand-Duchesne and the author \cite{esperet2023structure, esperet2024coarse, macmanus2023accessibility} combine to show that if $G$ admits \textbf{some} minor-excluded Cayley graph then $G$ is virtually a free product of free and surface groups.

Since `the' Cayley graph of a finitely generated group is only well-defined up to quasi-isometry, it is likely hard to say much more than the above about minors in Cayley graphs. 
However, recently interest has grown in a new topic known as `coarse graph theory' where one studies the `large-scale' features of graphs. Popularised by Georgakopoulos and Papasoglu in their seminal paper \cite{georgakopoulos2023graph}, the analogue of a graph minor in this field is something called an `asymptotic minor'. Roughly speaking, the idea is that a graph $H$ is called an asymptotic minor of a metric space $X$ if we see arbitrarily `fat' copies of $H$ within $X$. See Section~\ref{sec:fat-minors} below for a precise definition. The benefit here is that the set of asymptotic minors of a graph is easily seen to be a quasi-isometry invariant. 
Originally, Georgakopoulos and Papasoglu posed the following conjecture.

\begin{falseconjecture*}[{\cite[1.1]{georgakopoulos2023graph}}]
	Let $X$, $H$ be connected graphs with $H$ finite. Then $H$ is not an asymptotic minor of $X$ if and only if $X$ is quasi-isometric to some graph $Y$ which contains no $H$-minor. 
\end{falseconjecture*}

The `if' direction of the above is an easy exercise. Regarding the converse, 
several positive results are known for certain small $H$ \cite{manning2005geometry, chepoi2012constant, fujiwara2023coarse, georgakopoulos2023graph}. Recently, a counterexample was found by Davies, Hickingbotham, Illingworth, and McCarty \cite{davies2024fat}. 

Georgakopoulos and Papasoglu also posed several related questions. In particular, the following conjecture is still  open.

\begin{conjecture}[{\cite[9.3]{georgakopoulos2023graph}}]\label{con:main-conjecture}
    A connected, locally finite, quasi-transitive graph $X$ is asymptotically minor-excluded if and only if it is quasi-isometric to a planar graph. 
\end{conjecture}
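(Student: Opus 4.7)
The plan is to prove the finitely presented case of the conjecture, following the strategy suggested by the abstract. The easy direction is that planarity is preserved by asymptotic minors: if $X$ is quasi-isometric to a planar graph $Y$, then any finite $H$ with $H \not\leq Y$ as a minor (e.g.\ $H = K_5$) cannot appear as an asymptotic minor of $X$, since a fat $H$-minor in $X$ would pull back across the quasi-isometry to an honest $H$-minor in a subdivision of $Y$. Thus the content is the converse: if the Cayley graph of a finitely presented group $G$ is asymptotically minor-excluded, then $G$ virtually admits a planar Cayley graph.

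My overall strategy would be to decompose $G$ using Dunwoody's accessibility theorem for finitely presented groups, reducing to a graph-of-groups decomposition over finite edge groups whose vertex groups are finite or one-ended and finitely presented. Asymptotic minor-exclusion passes to quasi-isometrically embedded subspaces, and in particular to vertex groups of such a decomposition (using, say, a Bass--Serre covering argument on the Cayley $2$\nobreakdash-complex). So it suffices to treat the one-ended case: I would aim to prove that a one-ended finitely presented group whose Cayley graph is asymptotically minor-excluded is a virtual surface group. Combining with the free/finite vertex groups and reassembling the graph-of-groups then yields the desired virtually planar Cayley graph, since graphs of free and surface groups over finite edge groups are known to admit planar Cayley graphs (this also matches the conclusion of the Esperet--Giocanti--Legrand-Duchesne--MacManus theorem cited in the introduction, suggesting the same class is the right target).

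For the one-ended case I would work with the Cayley $2$\nobreakdash-complex $\widetilde{X}$ of a finite presentation and try to promote asymptotic minor-exclusion to a \emph{coarse planarity} statement: namely, that $\widetilde{X}$ is quasi-isometric to the universal cover of a compact surface. The idea is to use Dunwoody tracks or Papasoglu-style coarse separators to build an equivariant family of disjoint loops that cut $\widetilde{X}$ into regions of bounded diameter modulo the $G$-action. The absence of fat $K_5$ and $K_{3,3}$ minors should force any such cut system to be `planar' in a coarse sense, meaning that cyclic orderings on the boundaries of tracks can be chosen coherently. One would then appeal to a quasi-isometric rigidity result (in the spirit of Maillot or Kapovich--Kleiner) to conclude that $\widetilde{X}$ is quasi-isometric to $\mathbb{E}^2$, $\mathbb{H}^2$, or $S^2$, so $G$ is a virtual surface group.

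The main obstacle, and where the finite-presentation hypothesis really bites, is building this equivariant system of coarse planar separators from the asymptotic hypothesis. A fat $K_5$- or $K_{3,3}$-minor is a purely coarse-geometric obstruction, and turning its absence into a genuine cut system requires control on how coarse discs in $\widetilde{X}$ can intersect: one needs that every coarse loop bounds, up to bounded error, a `disc' whose interior does not cross too many other loops. Without a finite $2$-complex to house these discs, it is unclear how to run the argument, which is presumably why only the finitely presented case is resolved here. I would expect the bulk of the technical work of the paper to be in this step, likely proving a coarse analogue of the Kuratowski theorem for Cayley $2$\nobreakdash-complexes of asymptotically minor-excluded finitely presented groups.
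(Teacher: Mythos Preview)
Your reduction via Dunwoody accessibility to the one-ended case matches the paper, and you correctly identify that the content is the one-ended finitely presented case. However, your approach to that case is fundamentally different from the paper's and leaves its central step as speculation.

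The paper argues the contrapositive: given a one-ended finitely presented $G$ that is \emph{not} a virtual surface group, it explicitly constructs arbitrarily fat $K_n$-minors in any Cayley graph. This goes via a structural trichotomy. Either $G$ admits an infinite descending chain of one-ended infinite-index subgroups (then one inducts, adding a branch set at each level using normal rays and one-endedness), or $G$ has an infinite-index surface subgroup (then one draws an arbitrary finite graph in the plane, pushes it into the subgroup, and uses translates of a normal ray as ``bridges'' to resolve the crossings), or, after iterating JSJ-type splittings and invoking Bestvina--Feighn, Wise, and Wilton, one finds a one-ended finitely presented subgroup that does not split over a two-ended subgroup. In that last case Papasoglu's theorem says no bi-infinite geodesic coarsely separates, and an Arzel\`a--Ascoli compactness argument upgrades this to a local ``diversion lemma'': near any long geodesic, two coarsely perpendicular paths can be joined while avoiding a $K$-neighbourhood of the geodesic. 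This lets one lay down $n$ long normal geodesics as branch sets and thread $\binom{n}{2}$ edge-paths at distinct heights, diverting each around the branch sets it should not meet. No coarse Kuratowski theorem, no Dunwoody tracks, and no QI rigidity appear anywhere.

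Your proposed route---using the absence of fat $K_5$ and $K_{3,3}$ to build an equivariant track system with coherent cyclic orderings, then invoking Maillot or Kapovich--Kleiner rigidity---is not obviously wrong, but the step you yourself flag as the ``main obstacle'' is exactly the one you do not supply. Nothing in the literature converts asymptotic minor-exclusion into a coarse planar cut system, and the absence of fat $K_{3,3}$ gives no direct control on how tracks in the Cayley $2$-complex intersect. Kapovich--Kleiner-style rigidity needs a genuine coarse $\mathrm{PD}(2)$ or planar-boundary hypothesis that you have not produced from minor-exclusion. So as written this is a plan whose decisive lemma is missing; the paper avoids the issue entirely by building fat minors rather than ruling them out.
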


In this paper, we partially answer Conjecture~\ref{con:main-conjecture} in the affirmative, under the additional assumption that $X$ is a Cayley graph of some finitely presented group.

\begin{restatable}{alphtheorem}{fp}\label{thm:main-result}
A finitely \textbf{presented} group is asymptotically minor-excluded if and only if some finite index subgroup of $G$ admits a planar Cayley graph. 
\end{restatable}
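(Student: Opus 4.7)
The easy direction is straightforward: suppose some finite-index $H \leq G$ admits a planar Cayley graph $\Gamma$. Then $\Gamma$ contains no $K_5$ minor, and consequently no $K$-fat $K_5$-minor for any $K \geq 1$. Since $G$ is quasi-isometric to $\Gamma$ (as $H$ has finite index in $G$), and since asymptotic minor-exclusion is a quasi-isometry invariant, $G$ is asymptotically minor-excluded.

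For the forward direction, my plan is to combine Dunwoody's accessibility theorem with a structural analysis of one-ended pieces. Since $G$ is finitely presented, it splits as the fundamental group of a finite graph of groups $\cG$ with finite edge groups and vertex groups that are either finite or one-ended. Each vertex group is finitely presented, and because the edge groups are finite, it is quasi-isometrically embedded in $G$. Hence each one-ended vertex group inherits asymptotic minor-exclusion, and the problem reduces to the one-ended case.

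The crux is then to show that any one-ended, finitely presented, asymptotically minor-excluded group $H$ is virtually a closed surface group. My approach would be to extract, from asymptotic $K_5$- and $K_{3,3}$-exclusion, an honest \emph{coarse planar structure} on a Cayley graph of $H$: concretely, at some sufficiently large scale $K$, an equivariant cover of $H$ by disc-like regions with controlled overlaps whose incidence pattern embeds in the plane. Using finite presentability, I would hope to promote this coarse data to a quasi-isometry between $H$ and a closed surface $\Sigma$, whence quasi-isometric rigidity of surface groups (and of $\Z^2$) identifies $H$ with $\pi_1(\Sigma)$ up to finite index. This promotion step is the main obstacle: the Davies--Hickingbotham--Illingworth--McCarty counterexample \cite{davies2024fat} to the general Georgakopoulos--Papasoglu conjecture shows that asymptotic minor-exclusion alone is not enough, so finite presentability must be used essentially -- likely by exploiting the 2-dimensional cell structure of a presentation complex to globalise local planar patches into an embedding into a surface.

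Finally, once each one-ended vertex group is known to be virtually a surface group, I pass to a torsion-free finite-index subgroup $G_0 \leq G$; the induced graph-of-groups splitting of $G_0$ has surface or free vertex groups and trivial edge groups, so $G_0$ is a free product of finitely many closed surface groups and a finitely generated free group. Each factor admits a planar Cayley graph (trees for free factors; tilings of $\bbH^2$ or $\R^2$ for surface factors), and the standard Cayley graph of a free product preserves planarity in a controlled fashion. Hence $G_0$ has a planar Cayley graph, completing the proof.
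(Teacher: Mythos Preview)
Your reduction to the one-ended case via Dunwoody accessibility is fine, and the easy direction matches the paper. But the heart of your argument is a genuine gap: you acknowledge that the ``promotion step'' --- going from asymptotic $K_5$- and $K_{3,3}$-exclusion to a coarse planar structure and then to a quasi-isometry with a surface --- is ``the main obstacle'', and you do not supply a mechanism for it. Phrases like ``I would hope to promote this coarse data'' are not a proof. The Davies--Hickingbotham--Illingworth--McCarty counterexample you cite shows that some non-trivial input beyond minor-exclusion is needed, and you have not identified what that input is or how finite presentability would be used. As written, this is the statement of a program, not a proof.

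The paper avoids this difficulty entirely by working contrapositively: rather than extracting planarity from minor-exclusion, it shows directly that any one-ended finitely presented group which is \emph{not} virtually a surface group admits arbitrarily fat $K_n$-minors. The argument is a case analysis driven by structure theory. If $G$ does not split over a two-ended subgroup, Papasoglu's theorem on quasi-lines forces a ``diversion lemma'' (no geodesic locally coarsely separates), and one builds fat $K_n$-minors by hand, with normals to a bi-infinite geodesic as branch sets. If $G$ does split, one passes to vertex groups and iterates; when vertex groups are virtually free, results of Bestvina--Feighn, Wise, Wilton, and Karrass--Solitar produce either a Baumslag--Solitar subgroup or an infinite-index one-ended (possibly surface) subgroup. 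Infinite-index surface subgroups, and infinite descending chains of one-ended subgroups (if the iteration never terminates), are each shown separately to force all $K_n$ as asymptotic minors via explicit constructions. None of this requires building a planar model of $G$; all the work is in constructing fat minors in the non-planar cases.
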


Since the asymptotic minors of a graph are a quasi-isometry invariant, we immediately obtain the following corollary. 

\begin{alphcor}
    Let $G$ be a finitely presented group which is quasi-isometric to some minor-excluded graph. Then some finite-index subgroup of $G$ admits a planar Cayley graph. 
\end{alphcor}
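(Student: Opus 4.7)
The plan is to derive the corollary directly from Theorem A by invoking the quasi-isometry invariance of asymptotic minors. Suppose $G$ is quasi-isometric to a graph $Y$ that excludes some finite graph $H$ as a classical minor. I first want to observe that $Y$ is then also asymptotically $H$-minor-excluded: by definition (to be made precise in Section~\ref{sec:fat-minors}), any asymptotic $H$-minor of $Y$ is witnessed by pairwise disjoint connected branch sets and connecting paths that are pairwise well-separated, and in particular such a configuration yields an ordinary $H$-minor of $Y$ by contracting each branch set and each connecting path. This is the easy direction of the false conjecture, already noted in the excerpt.

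Next, I would apply the quasi-isometry invariance of the asymptotic-minor relation, which is a consequence of the definition that appears immediately after its introduction in the excerpt. Since any Cayley graph of $G$ (with respect to a finite generating set) is quasi-isometric to $Y$, and $Y$ is asymptotically $H$-minor-excluded, the Cayley graph of $G$ is asymptotically $H$-minor-excluded as well. In particular, $G$ itself is asymptotically minor-excluded in the sense of Conjecture~\ref{con:main-conjecture}.

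Finally, since $G$ is finitely presented by hypothesis, Theorem~\ref{thm:main-result} applies and yields a finite-index subgroup of $G$ admitting a planar Cayley graph, which is the desired conclusion.

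There is no real obstacle here beyond checking the definitions carefully: the entire argument consists of (i) the trivial implication that an excluded classical minor is an excluded asymptotic minor, (ii) the fact that asymptotic minors are quasi-isometry invariants, and (iii) the main theorem. The only place requiring a small amount of care is (i), where one must verify from the precise definition of an asymptotic minor that collapsing the witness produces an honest $H$-minor — but this is a routine unpacking of the definition.
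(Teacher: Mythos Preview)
Your proposal is correct and matches the paper's own argument exactly: the paper derives the corollary in one line from Theorem~\ref{thm:main-result} by noting that asymptotic minors are a quasi-isometry invariant, together with the easy observation (flagged after the False Conjecture) that a minor-excluded graph is asymptotically minor-excluded. Your write-up simply spells out these two steps in more detail.
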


This can be seen as a strengthening for finitely presented groups of \cite[Cor.~D]{macmanus2023accessibility}, which records the same conclusion for those finitely generated groups which are quasi-isometric to planar graphs. 

Dropping the hypothesis of finite presentability would require a different approach to that presented in this paper, as the geometry of an arbitrary finitely generated group, or more generally a quasi-transitive graph, has the potential to be far more pathological than that of any finitely presented group. For example, one such challenge comes from the existence of inaccessible groups \cite{dunwoody1993inaccessible}.

\subsection*{Outline}

In order to prove Theorem~\ref{thm:main-result}, we may first note that Dunwoody's accessibility theorem \cite{dunwoody1985accessibility} allows us to immediately restrict to the one-ended case. Then, known structure results  imply that given a one-ended finitely presented group $G$ which is not a virtual surface group, at least one of the following holds:
\begin{enumerate}
	
	\item $G$ contains a infinite descending chain of one-ended subgroups
	$$
	G \geq G_1 \geq G_2 \ldots ,
	$$ 
	where each has infinite index in the last. 
	
	\item $G$ contains an infinite-index surface subgroup. 
	\item\label{itm:third} $G$ contains a one-ended, finitely presented subgroup which is not virtually a surface group, and does not split over a two-ended subgroup.
\end{enumerate}

To prove Theorem~\ref{thm:main-result} we thus proceed by considering each of these cases individually, making use of the hypothesised geometry present to directly construct fat minors in our Cayley graph. 

The first two cases are fairly elementary and self-contained, and do not require the hypothesis of finite presentability. However, the proof in the third case heavily relies on Papasoglu's geometric characterisation of two-ended splittings \cite{papasoglu2005quasi}, which fails for arbitrary one-ended groups \cite{papasoglu2012splittings}.

\section{Preliminaries}\label{sec:prelims}

\subsection{Notation and terminology}

Given a metric space $X$, we denote by $\dist_X$ its metric. If there is no risk of ambiguity then we may write $\dist = \dist_X$. Given $x \in X$ and $R > 0$ we denote by $N_R(x)$ the closed $R$-ball around $x$. If further clarity is needed then we may instead write $B_X(x;R)$ to make the choice of ambient space clear. We define similarly notation for the closed $R$-neighbourhood of a subset of $X$.

If $A, B \subset X$ are subsets of $X$ then we write $\dist_X(A,B)$ to mean their \textit{infimal distance}. That is, we define
$$
\dist_X(A,B) := \inf \{\dist_X(a,b) : a \in A, \ b \in B\}
$$
Given $x \in X$ and $A \subset X$ we will simplify notation by writing $\dist_X(x, A) = \dist_X(\{x\}, A)$. 
We denote the \textit{Hausdorff distance} of $A$ and $B$ in $X$ as 
$$
\dHaus_X(A,B) := \inf \{R >0 : A \subset N_R(B) \text{ and } B \subset N_R(A) \}.
$$
Given $R, \varepsilon > 0$ and $Z \subset X$, we will write
$$
A_{R \pm \varepsilon}(Z) := \{x \in X : \dist_X(x, Z) \in [R-\varepsilon, R+\varepsilon] \}. 
$$
We will also write 
$$
S_{R}(Z) := \{x \in X : \dist_X(x, Z) = R \}.
$$

If $Y$ is another metric space and $\lambda \geq 1$ is a constant, then a map $\varphi : X \to Y$ is called a \textit{$\lambda$-quasi-isometric embedding} if 
$$
\tfrac 1 \lambda \dist_X(x,y) - \lambda \leq \dist_Y(\varphi(x), \varphi(y)) \leq \lambda \dist_X(x,y) + \lambda,
$$
for all $x, y \in X$.
Furthermore, if $\varphi$ satisfies that for every $y \in Y$ there exists $x \in X$ such that 
$$
\dist_Y(y, \varphi(x)) \leq \lambda,
$$
then we call $\varphi$ a $\lambda$-quasi-isometry. Given two quasi-isometries $\varphi : X \to Y$, $\psi : Y \to X$, and $\lambda \geq 1$, we say that \emph{$\psi$ is a $\lambda$-quasi-inverse to $\varphi$} if for all $x \in X$ we have that $\dist_X(\psi \circ \varphi(x), x) \leq \lambda$.  It is easy to check that every quasi-isometry has a quasi-inverse. 

We will also need the following weaker notion. We say that a map $F : X \to Y$ is a \textit{coarse embedding} if there exists $\xi_-, \xi_+ : [0,\infty) \to [0,\infty)$ such that 
$
\lim_{t \to \infty} \xi_\pm (t) = \infty
$
and 
$$
\xi_-(\dist_X(x, y)) \leq \dist_Y(F(x), F(y)) \leq \xi_+(\dist_X(x, y))
$$
for all $x, y \in X$. It is easy to see that we may assume without loss of generality that $\xi_-$ is increasing, surjective, and every value in $[0,\infty)$ except for 0 has exactly one preimage under $\xi_-$. The canonical example of a coarse embedding is that if $X$, $Y$ are Cayley graphs of finitely generated groups $H$ and $G$ respectively where $H \leq G$, then $X$ coarsely embeds into $Y$. 

Now, suppose that $X$ is a (connected, locally finite) graph. For us, a \textit{graph} shall always be taken to mean a 1-dimensional simplicial complex. We write $VX$ and $EX$ for the vertex set and edge set of $X$, respectively.  For the purposes of this paper, every graph we consider will be assumed to be simplicial. We denote by $\Delta(X)$ the maximal degree of any vertex of $X$. We will normally abuse notation and identify $X$ with its geometric realisation as a CW-complex. We then metrise $VX$ via the natural path metric and extend this metric naturally to all of $X$ by identifying each edge with a copy of the unit interval. 

Throughout this paper we will abuse notation relating to paths. Sometimes we may view path a path $p$ in a graph $X$ as subgraph, and other times we will parameterise $p$ as a map $p : I \to X$, where $I$ is some interval. In the latter case, unless otherwise stated we will usually take $I = [0, \ell]$ for some $\ell \geq 0$, and $p$ will be taken to be parameterised at unit speed. That is, $\ell = \length(p)$.

\subsection{Fat and asymptotic minors}\label{sec:fat-minors}

Let $(X,\dist)$ be a length space, $H$ a connected simplicial graph, and $K \geq 1$. Then a \textit{$K$-fat $H$-minor} is a subspace $M \subset X$ equipped with a decomposition into pieces
$$
M =  \bigcup_{v \in VH} U_v \cup \bigcup_{e \in EH} P_e,
$$
satisfying the following:
\begin{enumerate}
    \item each $U_v$ is path connected, 

    \item if $e \in EH$ is an edge with endpoints $u, v \in VH$, then $P_e$ is a path connecting $U_u$ to $U_v$, 

    \item For all distinct $A, B \in \{U_v : v \in VH\} \cup \{P_e : e \in EH\}$ we have that $\dist(A, B) > K$, unless $A = P_e$, $B= U_v$, or vice versa, and $v$ is an endpoint of $e$.

\end{enumerate}
The $U_v$ are called the \textit{branch sets} of $M$ and the $P_e$ are called the \textit{edge paths}.

If $X$ contains a $K$-fat $H$-minor for every $K \geq 1$, then we say that $H$ is an \textit{asymptotic minor} of $X$. If there exists some finite $H$ such that $H$ is not an asymptotic minor of $X$, then we say that $X$ is \textit{asymptotically minor-excluded}. 

It is immediate that if $X$ and $Y$ are length spaces and $X$ coarsely embeds into $Y$, then every asymptotic minor of $X$ is also an asymptotic minor of $Y$.

As a warm-up to constructing fat minors, we record the following easy example of groups which are not asymptotically minor-excluded. 
Given integers $n, m \neq 0$, the \textit{Baumslag--Solitar group} $BS(n,m)$ is given by the one-relator presentation
$$
BS(n,m) := \pres{x,t}{x^n = t^{-1} x^m t}.
$$
For example, $BS(1,1) \cong \Z^2$ and $BS(1,-1)$ is the fundamental group of a Klein bottle. Outside of these basic examples, we note the following. 

\begin{proposition}\label{lem:bs}
    Given $n > 1$, $m \neq 0$, we have that the group $BS(n,m)$ is not asymptotically minor-excluded.
\end{proposition}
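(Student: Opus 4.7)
The plan is to show that every finite graph $H$ is an asymptotic minor of $X := \Cay(BS(n,m), \{x, t\})$. Since fat minors pass to minors of the target, it suffices to construct, for every $N$ and every $K$, a $K$-fat $K_N$-minor in $X$; every finite $H$ will then be an asymptotic minor via its realisation as a minor of some $K_N$.

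The two structural features I would use both come from the HNN decomposition of $BS(n,m)$ over $\langle x\rangle \cong \Z$. First, each coset $L_g := g\langle x\rangle$ is a bi-infinite geodesic ``$x$-line'' in $X$, and the Bass--Serre tree $T$ has valence $n + m \geq 3$ at every vertex (since $n \geq 2$), so $T$ contains arbitrarily large finite subtrees. Second, between two $x$-lines adjacent in $T$ there are infinitely many parallel $t$-edges: if $L = L_g$ and $L' = L_{gx^i t}$ for some $0 \leq i < n$, then the $t$-edges from $L$ to $L'$ occur at the vertices $\{gx^{i+jn} : j \in \Z\}$, spaced $n$ apart along $L$.

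With these in hand, I would build a $K$-fat $K_N$-minor as follows. Pick $N$ distinct $x$-lines $L_1, \ldots, L_N$ whose images are pairwise at distance at least $D \gg K$ in $T$, which is possible since $T$ has valence $\geq 3$, and take each branch set $U_i$ to be a long enough segment of $L_i$. For each edge $\{i,j\}$ of $K_N$, route an edge path $P_{ij}$ along the unique tree-path from $L_i$ to $L_j$, transitioning between consecutive $x$-lines via one of the infinitely many parallel $t$-edges described above. Since at most $\binom{N}{2}$ distinct paths cross any one tree-edge, one can assign these $t$-edges to distinct paths at spacing at least $K$ along every $x$-line; combined with the large tree-distance $D$, this yields the required pairwise $K$-separation of all the branch sets and edge paths.

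The main obstacle would be the combinatorial bookkeeping in this routing: making all $\binom{N}{2}$ edge paths pairwise $K$-separated outside their declared common endpoints, while also ensuring that the edge paths incident to any single branch set enter at $K$-separated points of $U_i$. Both conditions are achievable because every $x$-line is bi-infinite and there are infinitely many parallel $t$-edges along every tree-edge, leaving ample room to perform all allocations consistently. One could equivalently proceed by fixing a single combinatorially defined ``pattern'' for the minor and inflating it using the self-similarity $t x^n t^{-1} = x^m$ to achieve arbitrary $K$, but the direct routing above avoids the need to analyse that self-similarity explicitly.
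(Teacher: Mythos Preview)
The paper itself gives no proof here: it simply says ``This follows very quickly from inspecting the standard Cayley graphs of these groups. We leave this as an exercise to the reader.'' So there is nothing to compare against in terms of method, and your approach via the Bass--Serre tree and the bi-infinite $x$-lines is a perfectly natural way to carry out that exercise. Your structural observations are correct: the tree has valence $|n|+|m|\geq 3$, adjacent $x$-lines are joined by infinitely many parallel $t$-edges, and the Cayley-graph distance between two $x$-lines equals their tree distance.

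That said, the routing step hides two genuine subtleties that your sketch does not quite address. First, spacing the chosen $t$-edges $K$ apart along each line controls the \emph{crossing points} of different edge paths, but each $P_{ij}$ also contains the $x$-segment joining its entry and exit points on every intermediate line; if two paths' entry/exit points interleave on some line $L$, their $x$-segments overlap regardless of how far apart the individual $t$-edges are. You need to assign to each path, on each line it visits, an entire interval disjoint and $K$-separated from all other paths' intervals, and do so compatibly with the scaling $jn\mapsto jm$ across $t$-edges. Second, ``pairwise tree-distance $\geq D$ between the $L_i$'' is not enough: two tree-geodesics $[L_i,L_j]$ and $[L_k,L_l]$ can be vertex-disjoint yet at tree-distance $1$ (the ``H''-configuration of four leaves), and then the corresponding edge paths in $X$ could sit on adjacent $x$-lines and be at Cayley distance $1$. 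Both issues are fixable --- for instance, take the $L_i$ to be leaves of a complete binary subtree in which each edge has been stretched to length $>K$ (forcing disjoint leaf-geodesics to be at tree-distance $>K$), and process the finite Steiner tree greedily, reserving widely separated zones on each line --- but this is exactly the ``combinatorial bookkeeping'' you flag, and it is a bit more than a one-line remark.
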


\begin{proof}
    This follows very quickly from inspecting the standard  Cayley graphs of these groups. We leave this as an exercise to the reader. 
\end{proof}


\section{Descending chains of one-ended subgroups}\label{sec:desc-chain}

In this section we prove that groups which contain an infinite descending chain of one-ended, infinite-index subgroups cannot be asymptotically minor-excluded. 

Throughout this paper, we will use the following terminology. 

\begin{definition}[Normal geodesic]\label{def:normal}
	Let $X$ be a graph and $Y \subset X$ a subgraph. 
    Let $\rho : I \to X$ be a (finite or one-way infinite) geodesic with one endpoint $y_0$ lying on $Y$. We say that $\rho$ is a \textit{normal to $Y$} if 
    $
    \dist_X(\rho(t), Y) = t
    $
    for all $t \in I$. 
\end{definition}
 
We first note the following standard application of the Arzela--Ascoli theorem, which says that infinite normals often exist.

\begin{lemma}\label{lem:normal-ray-subgroup}
    Let $X$ be a connected, locally finite graph. Let $Y \subset X$ be a subgraph such that the setwise satibliser of $Y$ in $ \Aut(X)$ acts coboundedly on $Y$, and $\dHaus_X(Y, X) = \infty$. Then there exists a one-way infinite geodesic ray $\rho : [0,\infty) \to X$ which is normal to $Y$. 
\end{lemma}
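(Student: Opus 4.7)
The plan is a fairly standard Arzel\`a--Ascoli argument combined with a translation step using the cobounded action, so most of the work will be organisational rather than substantive.

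First I would use the hypothesis $\dHaus_X(Y, X) = \infty$ to produce, for each $n \in \N$, a vertex $x_n \in VX$ with $\dist_X(x_n, Y) \geq n$. For each $n$, let $\gamma_n : [0, L_n] \to X$ be a geodesic in $X$ from a point $y_n \in Y$ to $x_n$ realising $\dist_X(x_n, Y)$; then $L_n \geq n$ and by construction $\gamma_n$ is already a normal to $Y$ in the sense of Definition~\ref{def:normal}, since any shortest path to $Y$ satisfies $\dist_X(\gamma_n(t), Y) = t$ for all $t \in [0, L_n]$. So the issue is not producing long normals, but extracting a single infinite limit from them.

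Next, I would invoke the cobounded action to prevent the basepoints $y_n$ from escaping to infinity. Let $H \leq \Aut(X)$ denote the setwise stabiliser of $Y$. By hypothesis there exist a vertex $y_0 \in Y$ and a constant $C > 0$ such that $H \cdot (N_C(y_0) \cap Y) \supseteq Y$. For each $n$ choose $g_n \in H$ with $g_n \cdot y_n \in N_C(y_0)$, and set $\gamma_n' := g_n \circ \gamma_n$. Because $g_n$ is an automorphism of $X$ preserving $Y$, the translate $\gamma_n'$ is still a normal to $Y$, now with starting point in the finite vertex set $N_C(y_0) \cap VX$ (finite by local finiteness of $X$).

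At this point the Arzel\`a--Ascoli step is routine. Passing to a subsequence I may assume $\gamma_n'(0)$ is a constant vertex $y^\ast$. Since $X$ is locally finite, for each $T > 0$ there are only finitely many length-$T$ geodesic initial segments starting at $y^\ast$, so a standard diagonal extraction yields a subsequence along which the $\gamma_n'$ stabilise on every compact initial interval, giving a one-way infinite geodesic ray $\rho : [0,\infty) \to X$ with $\rho(0) = y^\ast \in Y$. To verify normality, fix $t \geq 0$; for all sufficiently large $n$ we have $\rho|_{[0,t]} = \gamma_n'|_{[0,t]}$, so $\dist_X(\rho(t), Y) = \dist_X(\gamma_n'(t), Y) = t$, as required.

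The only real subtlety is ensuring that the elements $g_n \in H$ can genuinely be chosen to put $\gamma_n'(0)$ in a common bounded set, which is exactly what coboundedness of the $H$-action on $Y$ is designed to give. Nothing in the argument is delicate; the hardest part is just writing the diagonal extraction cleanly enough that the normality of $\rho$ follows transparently from that of the $\gamma_n'$.
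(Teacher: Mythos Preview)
Your proposal is correct and follows essentially the same route as the paper: produce arbitrarily long normal segments using $\dHaus_X(Y,X)=\infty$, translate their basepoints into a fixed bounded set via the cobounded action of the setwise stabiliser, and extract a limiting ray by Arzel\`a--Ascoli, with normality inherited from the approximants. The paper's write-up is terser but the argument is identical.
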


\begin{proof}
    Since $\dHaus_X(Y, X) = \infty$ we have that for all $k > 0$ there exists a geodesic segment $\rho_k : [0,k] \to X$ of length $k$ which is normal to $Y$. Using the action of $\Gamma_Y$, we may assume that there exists some compact subgraph $K \subset Y$ such that $\rho_k(0) \in K$ for all $k \geq 0$. 

    By the Arzela--Ascoli theorem, the sequence $(\rho_k)$ contains a subsequence $(\rho_{n_k})$ which converges uniformly on compact subsets to an infinite geodesic ray $\rho$. 
    In particular, for all $N \geq 1$ there exists $M \geq 1$ such that for all $k \geq M$ we have 
    $$
    \rho|_{[0,N]} = \rho_{n_k}|_{[0,N]}.
    $$ 
    From this property, it is clear that $\rho$ is normal to $Y$.
\end{proof}

Our proof of Theorem~\ref{thm:descending-chain} will mostly follow from the next lemma.

\begin{lemma}\label{lem:induct}
    Let $G$ be finitely generated, one-ended. 
    Fix $m \geq 1$. Suppose there exists some infinite-index, finitely generated $H \leq G$ such that $H$ contains $K_m$ as an asymptotic minor. 
    Then $G$ contains $K_{m+1}$ as an asymptotic minor. 
\end{lemma}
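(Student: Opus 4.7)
The plan is to build a $K$-fat $K_{m+1}$-minor of $X := \Cay(G)$ by \emph{augmenting} a very fat $K_m$-minor $M_0$ inside $H \subset X$ with a single new branch set $U_\ast$ living outside $H$, joined to each existing branch set $U_v$ by a fresh edge path $Q_v$. The two geometric inputs are the infinite index of $H$ in $G$ (which supplies an ``outward'' direction from $H$ through Lemma~\ref{lem:normal-ray-subgroup}) and the one-endedness of $G$ (which lets us stitch the outward ends together far from $M_0$). The fatness of $M_0$ will be taken enormous compared to $K$ to give the slack needed throughout.

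Fix a parameter $L \gg K$ to be specified later. The coarse embedding $\Cay(H) \hookrightarrow X$ combined with the hypothesis on $H$ yields an $L$-fat $K_m$-minor $M_0 = \bigcup_v U_v \cup \bigcup_e P_e$ in $X$, contained in $N_D(H)$ for some constant $D = D(H,X)$. Since $[G : H] = \infty$ and $X$ is locally finite we have $\dHaus_X(H,X) = \infty$; and since $H$ acts on $X$ by left multiplication, stabilising $H$ and acting transitively on its vertex set, Lemma~\ref{lem:normal-ray-subgroup} furnishes an infinite normal ray $\rho : [0,\infty) \to X$ from $H$ based at some $p_0 \in H$. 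For each $v \in V K_m$, pick a base point $h_v \in U_v \cap H$ at distance greater than $2K + D$ from each of the (at most $m-1$) entry points of the existing $P_e$'s into $U_v$ — a choice possible once $L$ is large enough in terms of $K$ and $m$, since the branch sets of an $L$-fat $K_m$-minor are forced to have diameter of order at least $(m-2)L$. Translate $\rho$ to $\rho_v := (h_v p_0^{-1}) \cdot \rho$, an infinite normal ray from $H$ based at $h_v$, and truncate to $\sigma_v := \rho_v|_{[0,T]}$ for some $T$ with $K + D < T < \tfrac12(L - K)$. Because $G$ is one-ended and $M_0$ is bounded, $X \setminus N_{K+1}(M_0)$ has a unique infinite component $C_0$; and since $\dist_X(\rho_v(t), H) = t$, each endpoint $\rho_v(T)$ lies in $C_0$. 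Join these $m$ endpoints by paths inside $C_0$ to form a connected subgraph $U_\ast$. Setting $Q_v := \sigma_v$, the new minor has branch sets $(U_v)_v, U_\ast$ and edge paths $(P_e)_e, (Q_v)_v$.

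The required $K$-farness conditions then reduce to a routine triangle-inequality chase. The bound $\dist_X(\sigma_v(t), \sigma_u(s)) \geq \dist_X(h_v, h_u) - t - s > L - 2T > K$ keeps the new edge paths pairwise $K$-far. For $t \geq K + D$ the point $\sigma_v(t)$ is more than $K$ from $M_0$, so $U_\ast \subset C_0$ is $K$-far from every piece of the original minor. The edge path $Q_v$ is $K$-far from any $P_e$ with $v \notin e$ by the $L$-fatness of $M_0$, and is $K$-far from the $P_e$'s with $v \in e$ thanks to the placement of $h_v$ away from their entries into $U_v$.

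The main obstacle I expect is precisely the base-point selection: verifying for each $v$ that a fresh entry point $h_v \in U_v$ can be chosen sufficiently far from the (at most $m-1$) taken entries of the existing $P_e$'s meeting $U_v$. This is essentially a combinatorial pigeonhole statement, tractable once $L$ is taken large enough relative to $K$ and $m$, but may require enlarging each $U_v$ by a geodesic protrusion inside $H$ into empty space so that a fresh entry is available. Everything else amounts to organising the constants $K, D, T, L$ consistently.
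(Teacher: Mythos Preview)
Your approach mirrors the paper's: take a very fat $K_m$-minor $M_0$ in a neighbourhood of $H$, use Lemma~\ref{lem:normal-ray-subgroup} to obtain a normal ray from $H$, translate it by the $H$-action to each branch set, and use one-endedness to assemble the new branch set in the unbounded complement. The paper differs only in that it uses the coboundedness of $H \curvearrowright Y$ to place each $\rho_i(0)$ merely within $B$ of $U_i$ (not on it), and takes $U_{m+1}$ to be the entire unbounded component of $X \setminus N_{5K}(M)$; this sidesteps any base-point selection.

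Your identified ``main obstacle'' is slightly misdiagnosed, and the stated justification for $\dist(Q_v, P_e) > K$ when $v \in e$ does not work as written. Choosing $h_v$ far from the \emph{entry points} of the $P_e$'s into $U_v$ is neither sufficient nor necessary: the path $P_e$ may wander arbitrarily close to other points of $U_v$ (there is no fatness constraint between $P_e$ and $U_v$ when $v \in e$), so $\dist(h_v, P_e)$ need not be large. The clean fix is to absorb the initial segment $\sigma_v|_{[0,\,K+D]}$ into the branch set $U_v$. The remaining edge path $\sigma_v|_{[K+D,\,T]}$ then satisfies $\dist(\sigma_v(t), P_e) \geq \dist(\sigma_v(t), H) - D = t - D > K$ for every $P_e \subset N_D(H)$, with no condition on $h_v$ beyond lying in $U_v$; the enlarged branch sets remain pairwise $K$-far provided $L > 3K + 2D$. (Minor point: $U_v \cap H$ may be empty; take $h_v \in H$ within $D$ of $U_v$ and adjoin the short connecting segment to $U_v$.) With this adjustment the constants organise themselves and no protrusion argument is needed.
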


\begin{proof}
	Let $X$ be some fixed Cayley graph of $G$. 
	Fix $K > 0$, $m \geq 0$, and suppose that $H$ contains $K_m$ as an asymptotic minor. 
    Let $Y \subset X$ be a connected tubular neighbourhood of $H$, so that $H \actson Y$ freely and cocompactly. Note that $Y$ therefore also contains $K_m$ as an asymptotic minor by assumption, and also the inclusion $Y \into X$ is a coarse embedding. 
    Fix $B \geq 1$ such that the action of $H$ on $Y$ is $B$-cobounded (with respect to the ambient metric on $X$). 
    
    Using Lemma~\ref{lem:normal-ray-subgroup}, let $\rho : [0, \infty) \to X$ be a one-way infinite geodesic ray based in $H$ such that $\dist_X(\rho(t), H) = t$ for every $t \geq 0$. 
    Since $Y$ contains $K_m$ as an asymptotic minor and the inclusion $Y \into X$ is a coarse embedding, we have that there exists arbitrarily fat $K_m$-minors in $X$ which are contained in $Y$. Let $K' = 10(K+B)$. Let $M \subset Y$ be a $K'$-fat (with respect to the ambient metric on $X$) $K_m$-minor. Let $U_1, \ldots, U_m$ be the branch sets of $M$. 

    Using the action of $H$ on $Y$, we place a copy $\rho_i$ of $\rho$ near each $U_i$, so $\dist_X(\rho_i(0), U_i) \leq B$ and  $\dist_X(\rho_i(0), U_j) > K'-B$ for $j \neq i$. 
    Since $X$ is one-ended, let 
    $
    U_{m+1}$ denote the unique unbounded component of $X \setminus N_{5K}(M). 
    $
    We have that all the $\rho_i$ are eventually contained in $U_{m+1}$. In particular, it is clear that 
    $$
    M' :=  M \cup U_{m+1} \cup \bigcup_i \rho_i,
    $$
    can be decomposed as a $K$-fat $K_{m+1}$-minor. See Figure~\ref{fig:km1} for a cartoon of this construction.
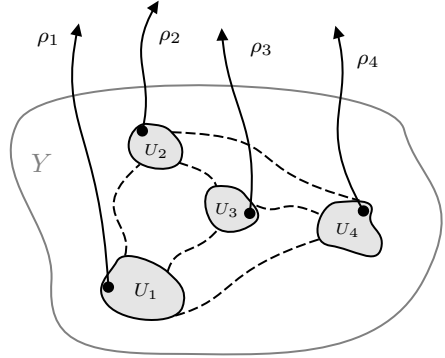
\begin{figure}
    \centering

\tikzset{every picture/.style={line width=0.75pt}} 

\begin{tikzpicture}[x=0.75pt,y=0.75pt,yscale=-1,xscale=1]

\draw  [color={rgb, 255:red, 128; green, 128; blue, 128 }  ,draw opacity=1 ] (162.6,93.4) .. controls (176.61,65.96) and (348.35,70.2) .. (355.4,97) .. controls (362.45,123.8) and (387.49,130.36) .. (365,173) .. controls (342.51,215.64) and (284.46,209.87) .. (238.1,209.87) .. controls (191.73,209.87) and (177.55,201.95) .. (179.4,173.4) .. controls (181.25,144.85) and (148.59,120.84) .. (162.6,93.4) -- cycle ;
\draw  [color={rgb, 255:red, 0; green, 0; blue, 0 }  ,draw opacity=1 ][fill={rgb, 255:red, 229; green, 229; blue, 229 }  ,fill opacity=1 ] (212.08,163.28) .. controls (215.62,156.6) and (232.76,164.24) .. (237.39,168.69) .. controls (242.02,173.15) and (248,177.29) .. (244.74,186.52) .. controls (241.47,195.75) and (215.62,194.47) .. (207.45,185.24) .. controls (199.29,176.01) and (208.54,169.97) .. (212.08,163.28) -- cycle ;
\draw  [color={rgb, 255:red, 0; green, 0; blue, 0 }  ,draw opacity=1 ][fill={rgb, 255:red, 229; green, 229; blue, 229 }  ,fill opacity=1 ] (227.05,94.22) .. controls (233.04,94.85) and (236.35,94.96) .. (239.36,98.37) .. controls (242.36,101.77) and (246.25,104.93) .. (244.13,111.98) .. controls (242.01,119.04) and (225.22,118.06) .. (219.91,111.01) .. controls (214.61,103.96) and (221.06,93.58) .. (227.05,94.22) -- cycle ;
\draw  [color={rgb, 255:red, 0; green, 0; blue, 0 }  ,draw opacity=1 ][fill={rgb, 255:red, 229; green, 229; blue, 229 }  ,fill opacity=1 ] (321.7,133.66) .. controls (329.65,133.04) and (341.63,131.45) .. (338.97,139.02) .. controls (336.31,146.6) and (346.21,147.55) .. (343.99,156.7) .. controls (341.76,165.85) and (330.47,157.23) .. (318.54,155.44) .. controls (306.62,153.64) and (313.75,134.27) .. (321.7,133.66) -- cycle ;
\draw  [color={rgb, 255:red, 0; green, 0; blue, 0 }  ,draw opacity=1 ][fill={rgb, 255:red, 229; green, 229; blue, 229 }  ,fill opacity=1 ] (261.71,125.55) .. controls (268.15,125.09) and (273.38,126.33) .. (275.99,129.95) .. controls (278.6,133.58) and (286.65,139.73) .. (280.14,144.45) .. controls (273.62,149.18) and (263.71,150.93) .. (259.1,143.42) .. controls (254.49,135.91) and (255.27,126.02) .. (261.71,125.55) -- cycle ;
\draw [color={rgb, 255:red, 0; green, 0; blue, 0 }  ,draw opacity=1 ] [dash pattern={on 3.75pt off 1.5pt}]  (216.16,160.42) .. controls (220.79,143.55) and (196.29,138.46) .. (223.51,114.58) ;
\draw [color={rgb, 255:red, 0; green, 0; blue, 0 }  ,draw opacity=1 ] [dash pattern={on 3.75pt off 1.5pt}]  (237.39,168.69) .. controls (241.2,154.69) and (251.27,160.74) .. (262.43,147.37) ;
\draw [color={rgb, 255:red, 0; green, 0; blue, 0 }  ,draw opacity=1 ] [dash pattern={on 3.75pt off 1.5pt}]  (241.2,190.65) .. controls (271.96,182.06) and (262.97,168.37) .. (313.32,152.14) ;
\draw [color={rgb, 255:red, 0; green, 0; blue, 0 }  ,draw opacity=1 ] [dash pattern={on 3.75pt off 1.5pt}]  (239.36,98.37) .. controls (301.89,100.58) and (270.05,116.18) .. (333.19,132.73) ;
\draw [color={rgb, 255:red, 0; green, 0; blue, 0 }  ,draw opacity=1 ] [dash pattern={on 3.75pt off 1.5pt}]  (281.75,135.27) .. controls (295.36,141.32) and (292.37,130.18) .. (312.78,139.41) ;
\draw [color={rgb, 255:red, 0; green, 0; blue, 0 }  ,draw opacity=1 ] [dash pattern={on 3.75pt off 1.5pt}]  (242.83,113.63) .. controls (249.09,115.86) and (259.44,118.09) .. (261.71,125.55) ;
\draw [color={rgb, 255:red, 0; green, 0; blue, 0 }  ,draw opacity=1 ]   (224.77,97.74) .. controls (231.69,68.1) and (215.42,61.73) .. (231.6,34.35) ;
\draw [shift={(232.91,32.2)}, rotate = 122.11] [fill={rgb, 255:red, 0; green, 0; blue, 0 }  ,fill opacity=1 ][line width=0.08]  [draw opacity=0] (6.25,-3) -- (0,0) -- (6.25,3) -- cycle    ;
\draw [shift={(224.77,97.74)}, rotate = 283.13] [color={rgb, 255:red, 0; green, 0; blue, 0 }  ,draw opacity=1 ][fill={rgb, 255:red, 0; green, 0; blue, 0 }  ,fill opacity=1 ][line width=0.75]      (0, 0) circle [x radius= 2.34, y radius= 2.34]   ;
\draw [color={rgb, 255:red, 0; green, 0; blue, 0 }  ,draw opacity=1 ]   (207.82,176.31) .. controls (203.56,96.97) and (182.01,95.4) .. (192.61,45.94) ;
\draw [shift={(193.12,43.65)}, rotate = 102.81] [fill={rgb, 255:red, 0; green, 0; blue, 0 }  ,fill opacity=1 ][line width=0.08]  [draw opacity=0] (6.25,-3) -- (0,0) -- (6.25,3) -- cycle    ;
\draw [shift={(207.82,176.31)}, rotate = 266.93] [color={rgb, 255:red, 0; green, 0; blue, 0 }  ,draw opacity=1 ][fill={rgb, 255:red, 0; green, 0; blue, 0 }  ,fill opacity=1 ][line width=0.75]      (0, 0) circle [x radius= 2.34, y radius= 2.34]   ;
\draw [color={rgb, 255:red, 0; green, 0; blue, 0 }  ,draw opacity=1 ]   (278.4,139.2) .. controls (283.66,93.54) and (267.48,87.32) .. (264.55,48.44) ;
\draw [shift={(264.39,46.02)}, rotate = 86.62] [fill={rgb, 255:red, 0; green, 0; blue, 0 }  ,fill opacity=1 ][line width=0.08]  [draw opacity=0] (6.25,-3) -- (0,0) -- (6.25,3) -- cycle    ;
\draw [shift={(278.4,139.2)}, rotate = 276.57] [color={rgb, 255:red, 0; green, 0; blue, 0 }  ,draw opacity=1 ][fill={rgb, 255:red, 0; green, 0; blue, 0 }  ,fill opacity=1 ][line width=0.75]      (0, 0) circle [x radius= 2.34, y radius= 2.34]   ;
\draw [color={rgb, 255:red, 0; green, 0; blue, 0 }  ,draw opacity=1 ]   (335.15,138.01) .. controls (309.21,91.58) and (332.28,85.72) .. (321.82,47.23) ;
\draw [shift={(321.13,44.83)}, rotate = 73.42] [fill={rgb, 255:red, 0; green, 0; blue, 0 }  ,fill opacity=1 ][line width=0.08]  [draw opacity=0] (6.25,-3) -- (0,0) -- (6.25,3) -- cycle    ;
\draw [shift={(335.15,138.01)}, rotate = 240.81] [color={rgb, 255:red, 0; green, 0; blue, 0 }  ,draw opacity=1 ][fill={rgb, 255:red, 0; green, 0; blue, 0 }  ,fill opacity=1 ][line width=0.75]      (0, 0) circle [x radius= 2.34, y radius= 2.34]   ;

\draw (218.99,173.09) node [anchor=north west][inner sep=0.75pt]  [font=\scriptsize,color={rgb, 255:red, 0; green, 0; blue, 0 }  ,opacity=1 ]  {$U_{1}$};
\draw (224.27,102.7) node [anchor=north west][inner sep=0.75pt]  [font=\tiny,color={rgb, 255:red, 0; green, 0; blue, 0 }  ,opacity=1 ]  {$U_{2}$};
\draw (319.75,142.48) node [anchor=north west][inner sep=0.75pt]  [font=\tiny,color={rgb, 255:red, 0; green, 0; blue, 0 }  ,opacity=1 ]  {$U_{4}$};
\draw (259.54,132.53) node [anchor=north west][inner sep=0.75pt]  [font=\tiny,color={rgb, 255:red, 0; green, 0; blue, 0 }  ,opacity=1 ]  {$U_{3}$};
\draw (167.81,107.82) node [anchor=north west][inner sep=0.75pt]  [color={rgb, 255:red, 128; green, 128; blue, 128 }  ,opacity=1 ]  {$Y$};
\draw (170.77,47.16) node [anchor=north west][inner sep=0.75pt]  [font=\footnotesize,color={rgb, 255:red, 0; green, 0; blue, 0 }  ,opacity=1 ]  {$\rho _{1}$};
\draw (231.57,45.96) node [anchor=north west][inner sep=0.75pt]  [font=\footnotesize,color={rgb, 255:red, 0; green, 0; blue, 0 }  ,opacity=1 ]  {$\rho _{2}$};
\draw (277.17,51.96) node [anchor=north west][inner sep=0.75pt]  [font=\footnotesize,color={rgb, 255:red, 0; green, 0; blue, 0 }  ,opacity=1 ]  {$\rho _{3}$};
\draw (330.37,57.96) node [anchor=north west][inner sep=0.75pt]  [font=\footnotesize,color={rgb, 255:red, 0; green, 0; blue, 0 }  ,opacity=1 ]  {$\rho _{4}$};

\end{tikzpicture}

    \caption{Extending a fat $K_{m}$-minor in $Y$ to a fat $K_{m+1}$-minor in $X$.}
    \label{fig:km1}
\end{figure}
\end{proof}

From Lemma~\ref{lem:induct}, we may easily deduce the following.

\begin{theorem}\label{thm:descending-chain}
Let $G$ be a finitely generated group. Suppose  there exists an infinite descending chain of subgroups
        $$
        G \geq G_0 \geq G_1 \geq  \ldots \geq G_i \geq \ldots 
        $$
        such that for all $i \geq 0$ we have that $G_i$ is one-ended and $|G_i : G_{i+1}| = \infty$.
    Then $G$ is not asymptotically minor-excluded. 
\end{theorem}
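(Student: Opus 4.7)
The plan is to apply Lemma~\ref{lem:induct} iteratively along the chain, pushing arbitrarily large complete graphs up as asymptotic minors from deep in the chain into $G_0$, and then transferring them into $G$ via a coarse embedding. The argument is a straightforward induction on the size of the complete graph one wishes to realise.

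For the base case I would first observe that $K_2$ is an asymptotic minor of any infinite connected graph: given $K > 0$, pick two vertices at distance strictly greater than $K$, use them as singleton branch sets, and take a geodesic between them as the single edge path. In particular, since each $G_i$ is one-ended and therefore infinite, $K_2$ is an asymptotic minor of any Cayley graph of $G_i$.

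Now fix any $n \geq 2$ and, proceeding by descending induction on $i \in \{n-2, n-3, \ldots, 0\}$, I would prove that $G_i$ contains $K_{n-i}$ as an asymptotic minor. The case $i = n-2$ is covered above. For the inductive step, assume $G_i$ contains $K_{n-i}$; then $G_{i-1}$ is finitely generated and one-ended, $G_i$ is a finitely generated subgroup (since one-ended groups are finitely generated by convention), and $|G_{i-1}:G_i| = \infty$ by hypothesis, so Lemma~\ref{lem:induct} applies and yields $K_{n-i+1}$ as an asymptotic minor of $G_{i-1}$. Taking $i = 0$ produces $K_n$ as an asymptotic minor of $G_0$. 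Since $G_0$ is a finitely generated subgroup of $G$, the orbit map realises a Cayley graph of $G_0$ as a coarse embedding into a Cayley graph of $G$, and asymptotic minors are preserved under coarse embeddings, as noted in Section~\ref{sec:fat-minors}. Hence $K_n$ is an asymptotic minor of $G$ as well, and since $n$ was arbitrary, $G$ is not asymptotically minor-excluded.

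Once Lemma~\ref{lem:induct} is in hand there is essentially no remaining obstacle; the entire work of the theorem sits inside that inductive step. The only things to verify are bookkeeping: that at each rung the hypotheses of the lemma are satisfied (which is immediate from the chain conditions together with the fact that a one-ended group is finitely generated), and that the induction bottoms out at a one-ended group in which $K_2$ trivially appears as an asymptotic minor. It is worth noting that finite presentability is never invoked in this argument, matching the remark in the outline that this case of the main theorem only requires finite generation.
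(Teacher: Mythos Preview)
Your proof is correct and follows essentially the same route as the paper: start deep in the chain with a trivial asymptotic minor and push up repeatedly via Lemma~\ref{lem:induct}. Your version is arguably slightly cleaner in two places: you begin with $K_2$ (so the hypothesis $m \geq 1$ of the lemma is visibly satisfied), and you handle the final passage from $G_0$ to $G$ by coarse embedding rather than by the lemma, which is the right move since the theorem does not assume $G$ itself is one-ended.
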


\begin{proof}
    Given $n \geq 0$ we have that $G_n$ trivially contains $K_0$ as an asymptotic minor. Applying Lemma~\ref{lem:induct} a total of $n$ times, we deduce that $G$ contains $K_n$ as an asymptotic minor. Since $n$ was arbitrary, the result follows. 
\end{proof}

\section{Groups with surface subgroups}\label{sec:plane}

In this section we prove that a one-ended, finitely generated group with an infinite-index surface subgroup is not asymptotically minor-excluded. 

\subsection{Idea of proof}

Given any finite graph $F$, we can consider a particular drawing of this graph, where we allow edges to cross each other. We can then view this drawing itself as a planar graph $P$, where the `crossings' correspond to degree-four vertices, which we will call \textit{marked} vertices.
Given a one-ended finitely generated group $G$ with Cayley graph $X$ and an infinite-index surface subgroup $H \leq G$, our strategy to build fat minors of an arbitrary $F$ is thus as follows:
\begin{enumerate}
	\item Consider a particular `drawing' of $F$ and its corresponding planar graph $P$. 
	\item Draw a very large, very controlled copy of $P$ in the (hyperbolic or Euclidean) plane. 
	\item Embed this copy of $P$ into a neighbourhood of $H$ in $X$, and use the aforementioned control together with the action of $H$ to turn the marked vertices back into `fat crossings'. 
\end{enumerate}
See Figure~\ref{fig:surface-final} for an illustration of this strategy.

\subsection{Controlled fat minors in the plane}

It is obvious that the (hyperbolic or Euclidean) plane contains every finite planar graph as an asymptotic minor. Our immediate goal is to gain a bit more control over the drawings of these fat minors. 

We first state the following standard definition. 

\begin{definition}[Coarse cover]
    Let $X$ be a metric space, $\varepsilon > 0$. Then an \textit{$\varepsilon$-cover} of $X$ is a subset $N \subset X$ such that for all $x \in X$ there exists $a \in N$ such that $\dist(a,x) \leq \varepsilon$. 

    If $N$ is an $\varepsilon$-cover for some $\varepsilon > 0$, we may suppress $\varepsilon$ from our notation and simply call $N$ a \textit{coarse cover} of $X$. 
\end{definition}

We now have the following key lemma, dubbed the 'carefully-drawn lemma'. 

\begin{lemma}[Carefully-drawn lemma]\label{lem:dial}
     Consider the (Euclidean or hyperbolic) plane. Let $N$ be a coarse cover of the plane. Let $P$ be a finite simplicial planar graph such that $\Delta(P) \leq 4$, equipped with a fixed drawing in the plane. Let $r, K > 0$ such that $K > 10r$. 
     
     Then there exists a $K$-fat $P$-minor $M$ in the plane with the following properties:
     \begin{enumerate}

         \item\label{itm:disc-branchsets} For every vertex $v \in VP$, the branch set $U_v$ is a solid circular disk of radius $r$ based at a point in $N$. 

		\item Suppose that for every $x \in N$ we are given a list of four evenly spaced angles $\theta_1, \ldots, \theta_4$. If $U_v$ is a branch set centred at $x$ then we may assume that any edge path terminating at $U_v$ does so at one of the $\theta_i$.  

        \item \label{itm:same-drawing}The cyclic order in which the edge paths leave the branch sets agrees with the cyclic  order with which the edges leave the vertices in the given drawing of $P$. 
     \end{enumerate}
\end{lemma}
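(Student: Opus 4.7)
The plan is to take the given drawing of $P$, realise it at a very large scale so that all features are pairwise far apart, snap each vertex to a nearby point of $N$, and then apply a local surgery at each endpoint of each edge to enforce the prescribed angles while preserving the cyclic order.

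Concretely, fix a large constant $L \gg K+r$. In the Euclidean case one simply rescales the drawing by $L$; in the hyperbolic case one instead chooses a configuration of points $x_v$ realising the same combinatorial planar pattern as the drawing (using that any finite planar graph admits a straight-line embedding, and that finite configurations with prescribed large pairwise distances can be placed in the hyperbolic plane), connected consecutively by geodesic arcs along each edge. In either setting this produces a planar realisation of $P$ in which vertex centres are pairwise at distance $\geq L$ and non-adjacent edges are at pairwise distance $\gg K$. Since $N$ is a coarse cover, each $x_v$ may then be replaced by a point $y_v \in N$ within bounded distance, at the cost of a negligible adjustment of the adjacent edges. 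Setting $U_v$ to be the closed disk of radius $r$ around $y_v$ gives condition~(1) of the statement. For each edge $e = uv$ of $P$, begin with the preliminary edge path obtained from this large-scale drawing. Because $\deg(v) \leq 4$ and there are four evenly spaced prescribed angles at $y_v$, the edges of $P$ incident to $v$ admit a cyclic-order-preserving injection into the four given angles $\theta_1,\ldots,\theta_4$; fix one such injection at each $v$. Now modify each preliminary edge path near each endpoint by a local detour of diameter $O(r)$, confined to a ball of radius, say, $2r$ around the relevant branch set, that reroutes the path so as to terminate at the chosen prescribed angle. This gives conditions~(2) and~(3).

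The required $K$-fatness follows from scale: pieces of $M$ associated to non-adjacent features of the drawing lie at distance $\gg K$ by choice of $L$, and the detour near any $U_v$ stays within distance $O(r) \ll L$ of $U_v$, so it remains at distance $\gg K$ from all other pieces. The main obstacle, ultimately mild, is coordinating the three conditions simultaneously: the cyclic-order-preserving angle assignment is always available precisely because the hypothesis $\Delta(P) \leq 4$ matches the number of prescribed angles, and the local endpoint surgery is small enough not to disturb the rest of the drawing. A minor additional subtlety arises in the hyperbolic case, where the absence of global scaling means one must verify directly that pairs of disjoint geodesic arcs between far-apart endpoints stay far apart throughout their length; but this follows from the planarity of the drawing together with elementary comparison with the Euclidean case, using that geodesics in $\mathbb{H}^2$ diverge at least as fast as in $\mathbb{R}^2$.
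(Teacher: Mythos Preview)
Your approach differs from the paper's in one structural way: the paper does not scale the given drawing directly. Instead it first embeds $P$ into the infinite square half-grid (straightforward since $\Delta(P)\le 4$ and the embedding can be chosen to respect the given cyclic orders), and then uses that both $\mathbb{R}^2$ and $\mathbb{H}^2$ contain arbitrarily fat copies of the half-grid---trivially in the Euclidean case, and via an explicit construction with horocycles and orthogonal geodesics in the upper half-plane model in the hyperbolic case. This yields a fat $P$-minor whose branch sets are large disks of radius $R\approx \varepsilon + r + 5K$; the snapping to $N$ and the angle adjustment are then both carried out inside each such disk, using nested circular arcs spaced roughly $K$ apart to keep the four rerouted paths separated (the paper's Figure~3). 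The half-grid detour is what buys a uniform treatment of the two geometries.

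Your hyperbolic case has a real gap. Placing vertices far apart in $\mathbb{H}^2$ and joining them by geodesics does \emph{not} force non-adjacent edge-geodesics to stay far apart: two disjoint geodesics in $\mathbb{H}^2$ have a unique common perpendicular, and their minimum distance along it is not controlled by the pairwise distances of their endpoints (take two long segments lying close to a common bi-infinite geodesic---the endpoints can be made arbitrarily far apart while the segments remain within any prescribed distance of each other in the middle). The divergence property you invoke only bounds how distance grows \emph{away} from that minimum, not the minimum itself, so the appeal to ``comparison with the Euclidean case'' does not go through. This is precisely what the paper's half-grid construction sidesteps, since the pairwise distances between the grid pieces can be read off directly in the half-plane model. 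A secondary point: your endpoint surgery is confined to a ball of radius $2r$, but since $K>10r$ such a ball cannot accommodate four pairwise $K$-separated reroutes; the paper works in disks of radius comparable to $5K$ for exactly this reason.
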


\begin{remark}
	Note that we only consider the case where $\Delta(P) \leq 4$ for simplicity. This lemma should be true without this restriction, but a proof in this generality would be a bit more involved and is not necessary for our purposes. 
\end{remark}

Intuitively, Lemma~\ref{lem:dial} says that we may build a fat $P$-minor $M$ for any finite planar graph $P$, where the vertices of $P$ are circular `dials' centered at prescribed points in the plane which can be rotated without destroying the fatness of $M$. 
Condition~(\ref{itm:same-drawing}) essentially says that the constructed drawing of $M$ and the given drawing of $P$ `look the same'.

We encourage the reader to try to convince themselves that this lemma is intuitively true. For the sake of completeness we now sketch a proof of Lemma~\ref{lem:dial}, though the exact details of this construction are not particularly enlightening. 

\begin{proof}[Sketch of Lemma~\ref{lem:dial}]
	Let $N$ be an $\varepsilon$-coarse cover of the plane, and fix $r, K > 0$ such that $K > 10r$. We will build a $K$-fat $P$-minor in the plane such that the branch sets are uniform disks of radius $r$ centred at points in $N$, with the property that the edge paths meet these disks at prescribed angles. 
	
	We first note that both the Euclidean and hyperbolic planes contain arbitrarily `fat' copies of the square half-grid. Indeed, in the Euclidean plane this is a wholly trivial observation, and in the hyperbolic plane we can construct examples easily using the Poincar\'e half-plane model. See Figure~\ref{fig:half-plane}. 
	
	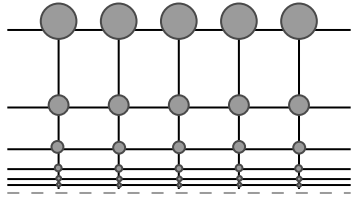
\begin{figure}[t]
		\centering

\tikzset{every picture/.style={line width=0.75pt}} 

\begin{tikzpicture}[x=0.75pt,y=0.75pt,yscale=-1,xscale=1]

\draw [color={rgb, 255:red, 155; green, 155; blue, 155 }  ,draw opacity=1 ] [dash pattern={on 4.5pt off 4.5pt}]  (74.4,207.6) -- (245.8,207.6) ;
\draw [color={rgb, 255:red, 0; green, 0; blue, 0 }  ,draw opacity=1 ]   (74.4,125.6) -- (245.8,125.6) ;
\draw [color={rgb, 255:red, 0; green, 0; blue, 0 }  ,draw opacity=1 ]   (74.4,164.6) -- (245.8,164.6) ;
\draw [color={rgb, 255:red, 0; green, 0; blue, 0 }  ,draw opacity=1 ]   (74.4,185.6) -- (245.8,185.6) ;
\draw [color={rgb, 255:red, 0; green, 0; blue, 0 }  ,draw opacity=1 ]   (74.4,195.6) -- (245.8,195.6) ;
\draw [color={rgb, 255:red, 0; green, 0; blue, 0 }  ,draw opacity=1 ]   (74.4,200.6) -- (245.8,200.6) ;
\draw [color={rgb, 255:red, 0; green, 0; blue, 0 }  ,draw opacity=1 ]   (74.4,203.6) -- (245.8,203.6) ;
\draw    (100,125.2) -- (100,205.6) ;
\draw    (130,125.2) -- (130,205.6) ;
\draw    (160,125.2) -- (160,205.6) ;
\draw    (190,125.2) -- (190,205.6) ;
\draw    (220,125.2) -- (220,205.6) ;
\draw  [color={rgb, 255:red, 74; green, 74; blue, 74 }  ,draw opacity=1 ][fill={rgb, 255:red, 155; green, 155; blue, 155 }  ,fill opacity=1 ] (91.1,121.2) .. controls (91.1,116.28) and (95.08,112.3) .. (100,112.3) .. controls (104.92,112.3) and (108.9,116.28) .. (108.9,121.2) .. controls (108.9,126.12) and (104.92,130.1) .. (100,130.1) .. controls (95.08,130.1) and (91.1,126.12) .. (91.1,121.2) -- cycle ;
\draw  [color={rgb, 255:red, 74; green, 74; blue, 74 }  ,draw opacity=1 ][fill={rgb, 255:red, 155; green, 155; blue, 155 }  ,fill opacity=1 ] (121.1,121.2) .. controls (121.1,116.28) and (125.08,112.3) .. (130,112.3) .. controls (134.92,112.3) and (138.9,116.28) .. (138.9,121.2) .. controls (138.9,126.12) and (134.92,130.1) .. (130,130.1) .. controls (125.08,130.1) and (121.1,126.12) .. (121.1,121.2) -- cycle ;
\draw  [color={rgb, 255:red, 74; green, 74; blue, 74 }  ,draw opacity=1 ][fill={rgb, 255:red, 155; green, 155; blue, 155 }  ,fill opacity=1 ] (151.1,121.2) .. controls (151.1,116.28) and (155.08,112.3) .. (160,112.3) .. controls (164.92,112.3) and (168.9,116.28) .. (168.9,121.2) .. controls (168.9,126.12) and (164.92,130.1) .. (160,130.1) .. controls (155.08,130.1) and (151.1,126.12) .. (151.1,121.2) -- cycle ;
\draw  [color={rgb, 255:red, 74; green, 74; blue, 74 }  ,draw opacity=1 ][fill={rgb, 255:red, 155; green, 155; blue, 155 }  ,fill opacity=1 ] (181.1,121.2) .. controls (181.1,116.28) and (185.08,112.3) .. (190,112.3) .. controls (194.92,112.3) and (198.9,116.28) .. (198.9,121.2) .. controls (198.9,126.12) and (194.92,130.1) .. (190,130.1) .. controls (185.08,130.1) and (181.1,126.12) .. (181.1,121.2) -- cycle ;
\draw  [color={rgb, 255:red, 74; green, 74; blue, 74 }  ,draw opacity=1 ][fill={rgb, 255:red, 155; green, 155; blue, 155 }  ,fill opacity=1 ] (211.1,121.2) .. controls (211.1,116.28) and (215.08,112.3) .. (220,112.3) .. controls (224.92,112.3) and (228.9,116.28) .. (228.9,121.2) .. controls (228.9,126.12) and (224.92,130.1) .. (220,130.1) .. controls (215.08,130.1) and (211.1,126.12) .. (211.1,121.2) -- cycle ;
\draw  [color={rgb, 255:red, 74; green, 74; blue, 74 }  ,draw opacity=1 ][fill={rgb, 255:red, 155; green, 155; blue, 155 }  ,fill opacity=1 ] (95.05,163.4) .. controls (95.05,160.67) and (97.27,158.45) .. (100,158.45) .. controls (102.73,158.45) and (104.95,160.67) .. (104.95,163.4) .. controls (104.95,166.13) and (102.73,168.35) .. (100,168.35) .. controls (97.27,168.35) and (95.05,166.13) .. (95.05,163.4) -- cycle ;
\draw  [color={rgb, 255:red, 74; green, 74; blue, 74 }  ,draw opacity=1 ][fill={rgb, 255:red, 155; green, 155; blue, 155 }  ,fill opacity=1 ] (125.05,163.4) .. controls (125.05,160.67) and (127.27,158.45) .. (130,158.45) .. controls (132.73,158.45) and (134.95,160.67) .. (134.95,163.4) .. controls (134.95,166.13) and (132.73,168.35) .. (130,168.35) .. controls (127.27,168.35) and (125.05,166.13) .. (125.05,163.4) -- cycle ;
\draw  [color={rgb, 255:red, 74; green, 74; blue, 74 }  ,draw opacity=1 ][fill={rgb, 255:red, 155; green, 155; blue, 155 }  ,fill opacity=1 ] (155.05,163.4) .. controls (155.05,160.67) and (157.27,158.45) .. (160,158.45) .. controls (162.73,158.45) and (164.95,160.67) .. (164.95,163.4) .. controls (164.95,166.13) and (162.73,168.35) .. (160,168.35) .. controls (157.27,168.35) and (155.05,166.13) .. (155.05,163.4) -- cycle ;
\draw  [color={rgb, 255:red, 74; green, 74; blue, 74 }  ,draw opacity=1 ][fill={rgb, 255:red, 155; green, 155; blue, 155 }  ,fill opacity=1 ] (185.05,163.4) .. controls (185.05,160.67) and (187.27,158.45) .. (190,158.45) .. controls (192.73,158.45) and (194.95,160.67) .. (194.95,163.4) .. controls (194.95,166.13) and (192.73,168.35) .. (190,168.35) .. controls (187.27,168.35) and (185.05,166.13) .. (185.05,163.4) -- cycle ;
\draw  [color={rgb, 255:red, 74; green, 74; blue, 74 }  ,draw opacity=1 ][fill={rgb, 255:red, 155; green, 155; blue, 155 }  ,fill opacity=1 ] (215.05,163.4) .. controls (215.05,160.67) and (217.27,158.45) .. (220,158.45) .. controls (222.73,158.45) and (224.95,160.67) .. (224.95,163.4) .. controls (224.95,166.13) and (222.73,168.35) .. (220,168.35) .. controls (217.27,168.35) and (215.05,166.13) .. (215.05,163.4) -- cycle ;
\draw  [color={rgb, 255:red, 74; green, 74; blue, 74 }  ,draw opacity=1 ][fill={rgb, 255:red, 155; green, 155; blue, 155 }  ,fill opacity=1 ] (96.45,184.43) .. controls (96.45,182.78) and (97.78,181.45) .. (99.43,181.45) .. controls (101.07,181.45) and (102.4,182.78) .. (102.4,184.43) .. controls (102.4,186.07) and (101.07,187.4) .. (99.43,187.4) .. controls (97.78,187.4) and (96.45,186.07) .. (96.45,184.43) -- cycle ;
\draw  [color={rgb, 255:red, 74; green, 74; blue, 74 }  ,draw opacity=1 ][fill={rgb, 255:red, 155; green, 155; blue, 155 }  ,fill opacity=1 ] (127.25,184.83) .. controls (127.25,183.18) and (128.58,181.85) .. (130.23,181.85) .. controls (131.87,181.85) and (133.2,183.18) .. (133.2,184.83) .. controls (133.2,186.47) and (131.87,187.8) .. (130.23,187.8) .. controls (128.58,187.8) and (127.25,186.47) .. (127.25,184.83) -- cycle ;
\draw  [color={rgb, 255:red, 74; green, 74; blue, 74 }  ,draw opacity=1 ][fill={rgb, 255:red, 155; green, 155; blue, 155 }  ,fill opacity=1 ] (157.13,184.6) .. controls (157.13,182.96) and (158.46,181.63) .. (160.1,181.63) .. controls (161.74,181.63) and (163.08,182.96) .. (163.08,184.6) .. controls (163.08,186.24) and (161.74,187.58) .. (160.1,187.58) .. controls (158.46,187.58) and (157.13,186.24) .. (157.13,184.6) -- cycle ;
\draw  [color={rgb, 255:red, 74; green, 74; blue, 74 }  ,draw opacity=1 ][fill={rgb, 255:red, 155; green, 155; blue, 155 }  ,fill opacity=1 ] (187.13,184.4) .. controls (187.13,182.76) and (188.46,181.43) .. (190.1,181.43) .. controls (191.74,181.43) and (193.08,182.76) .. (193.08,184.4) .. controls (193.08,186.04) and (191.74,187.38) .. (190.1,187.38) .. controls (188.46,187.38) and (187.13,186.04) .. (187.13,184.4) -- cycle ;
\draw  [color={rgb, 255:red, 74; green, 74; blue, 74 }  ,draw opacity=1 ][fill={rgb, 255:red, 155; green, 155; blue, 155 }  ,fill opacity=1 ] (217.13,184.8) .. controls (217.13,183.16) and (218.46,181.83) .. (220.1,181.83) .. controls (221.74,181.83) and (223.08,183.16) .. (223.08,184.8) .. controls (223.08,186.44) and (221.74,187.78) .. (220.1,187.78) .. controls (218.46,187.78) and (217.13,186.44) .. (217.13,184.8) -- cycle ;
\draw  [color={rgb, 255:red, 74; green, 74; blue, 74 }  ,draw opacity=1 ][fill={rgb, 255:red, 155; green, 155; blue, 155 }  ,fill opacity=1 ] (98.13,195.01) .. controls (98.13,194.08) and (98.88,193.33) .. (99.81,193.33) .. controls (100.74,193.33) and (101.5,194.08) .. (101.5,195.01) .. controls (101.5,195.94) and (100.74,196.7) .. (99.81,196.7) .. controls (98.88,196.7) and (98.13,195.94) .. (98.13,195.01) -- cycle ;
\draw  [color={rgb, 255:red, 74; green, 74; blue, 74 }  ,draw opacity=1 ][fill={rgb, 255:red, 155; green, 155; blue, 155 }  ,fill opacity=1 ] (98.88,200.26) .. controls (98.88,199.61) and (99.41,199.08) .. (100.06,199.08) .. controls (100.72,199.08) and (101.25,199.61) .. (101.25,200.26) .. controls (101.25,200.92) and (100.72,201.45) .. (100.06,201.45) .. controls (99.41,201.45) and (98.88,200.92) .. (98.88,200.26) -- cycle ;
\draw  [color={rgb, 255:red, 74; green, 74; blue, 74 }  ,draw opacity=1 ][fill={rgb, 255:red, 155; green, 155; blue, 155 }  ,fill opacity=1 ] (99.13,203.39) .. controls (99.13,202.87) and (99.54,202.45) .. (100.06,202.45) .. controls (100.58,202.45) and (101,202.87) .. (101,203.39) .. controls (101,203.91) and (100.58,204.33) .. (100.06,204.33) .. controls (99.54,204.33) and (99.13,203.91) .. (99.13,203.39) -- cycle ;
\draw  [color={rgb, 255:red, 74; green, 74; blue, 74 }  ,draw opacity=1 ][fill={rgb, 255:red, 155; green, 155; blue, 155 }  ,fill opacity=1 ] (128.38,195.26) .. controls (128.38,194.33) and (129.13,193.58) .. (130.06,193.58) .. controls (130.99,193.58) and (131.75,194.33) .. (131.75,195.26) .. controls (131.75,196.19) and (130.99,196.95) .. (130.06,196.95) .. controls (129.13,196.95) and (128.38,196.19) .. (128.38,195.26) -- cycle ;
\draw  [color={rgb, 255:red, 74; green, 74; blue, 74 }  ,draw opacity=1 ][fill={rgb, 255:red, 155; green, 155; blue, 155 }  ,fill opacity=1 ] (129.13,200.51) .. controls (129.13,199.86) and (129.66,199.33) .. (130.31,199.33) .. controls (130.97,199.33) and (131.5,199.86) .. (131.5,200.51) .. controls (131.5,201.17) and (130.97,201.7) .. (130.31,201.7) .. controls (129.66,201.7) and (129.13,201.17) .. (129.13,200.51) -- cycle ;
\draw  [color={rgb, 255:red, 74; green, 74; blue, 74 }  ,draw opacity=1 ][fill={rgb, 255:red, 155; green, 155; blue, 155 }  ,fill opacity=1 ] (129.38,203.64) .. controls (129.38,203.12) and (129.79,202.7) .. (130.31,202.7) .. controls (130.83,202.7) and (131.25,203.12) .. (131.25,203.64) .. controls (131.25,204.16) and (130.83,204.58) .. (130.31,204.58) .. controls (129.79,204.58) and (129.38,204.16) .. (129.38,203.64) -- cycle ;
\draw  [color={rgb, 255:red, 74; green, 74; blue, 74 }  ,draw opacity=1 ][fill={rgb, 255:red, 155; green, 155; blue, 155 }  ,fill opacity=1 ] (158.38,195.26) .. controls (158.38,194.33) and (159.13,193.58) .. (160.06,193.58) .. controls (160.99,193.58) and (161.75,194.33) .. (161.75,195.26) .. controls (161.75,196.19) and (160.99,196.95) .. (160.06,196.95) .. controls (159.13,196.95) and (158.38,196.19) .. (158.38,195.26) -- cycle ;
\draw  [color={rgb, 255:red, 74; green, 74; blue, 74 }  ,draw opacity=1 ][fill={rgb, 255:red, 155; green, 155; blue, 155 }  ,fill opacity=1 ] (159.13,200.51) .. controls (159.13,199.86) and (159.66,199.33) .. (160.31,199.33) .. controls (160.97,199.33) and (161.5,199.86) .. (161.5,200.51) .. controls (161.5,201.17) and (160.97,201.7) .. (160.31,201.7) .. controls (159.66,201.7) and (159.13,201.17) .. (159.13,200.51) -- cycle ;
\draw  [color={rgb, 255:red, 74; green, 74; blue, 74 }  ,draw opacity=1 ][fill={rgb, 255:red, 155; green, 155; blue, 155 }  ,fill opacity=1 ] (159.38,203.64) .. controls (159.38,203.12) and (159.79,202.7) .. (160.31,202.7) .. controls (160.83,202.7) and (161.25,203.12) .. (161.25,203.64) .. controls (161.25,204.16) and (160.83,204.58) .. (160.31,204.58) .. controls (159.79,204.58) and (159.38,204.16) .. (159.38,203.64) -- cycle ;
\draw  [color={rgb, 255:red, 74; green, 74; blue, 74 }  ,draw opacity=1 ][fill={rgb, 255:red, 155; green, 155; blue, 155 }  ,fill opacity=1 ] (188.38,195.01) .. controls (188.38,194.08) and (189.13,193.33) .. (190.06,193.33) .. controls (190.99,193.33) and (191.75,194.08) .. (191.75,195.01) .. controls (191.75,195.94) and (190.99,196.7) .. (190.06,196.7) .. controls (189.13,196.7) and (188.38,195.94) .. (188.38,195.01) -- cycle ;
\draw  [color={rgb, 255:red, 74; green, 74; blue, 74 }  ,draw opacity=1 ][fill={rgb, 255:red, 155; green, 155; blue, 155 }  ,fill opacity=1 ] (189.13,200.26) .. controls (189.13,199.61) and (189.66,199.08) .. (190.31,199.08) .. controls (190.97,199.08) and (191.5,199.61) .. (191.5,200.26) .. controls (191.5,200.92) and (190.97,201.45) .. (190.31,201.45) .. controls (189.66,201.45) and (189.13,200.92) .. (189.13,200.26) -- cycle ;
\draw  [color={rgb, 255:red, 74; green, 74; blue, 74 }  ,draw opacity=1 ][fill={rgb, 255:red, 155; green, 155; blue, 155 }  ,fill opacity=1 ] (189.38,203.39) .. controls (189.38,202.87) and (189.79,202.45) .. (190.31,202.45) .. controls (190.83,202.45) and (191.25,202.87) .. (191.25,203.39) .. controls (191.25,203.91) and (190.83,204.33) .. (190.31,204.33) .. controls (189.79,204.33) and (189.38,203.91) .. (189.38,203.39) -- cycle ;
\draw  [color={rgb, 255:red, 74; green, 74; blue, 74 }  ,draw opacity=1 ][fill={rgb, 255:red, 155; green, 155; blue, 155 }  ,fill opacity=1 ] (218.13,195.26) .. controls (218.13,194.33) and (218.88,193.58) .. (219.81,193.58) .. controls (220.74,193.58) and (221.5,194.33) .. (221.5,195.26) .. controls (221.5,196.19) and (220.74,196.95) .. (219.81,196.95) .. controls (218.88,196.95) and (218.13,196.19) .. (218.13,195.26) -- cycle ;
\draw  [color={rgb, 255:red, 74; green, 74; blue, 74 }  ,draw opacity=1 ][fill={rgb, 255:red, 155; green, 155; blue, 155 }  ,fill opacity=1 ] (218.88,200.51) .. controls (218.88,199.86) and (219.41,199.33) .. (220.06,199.33) .. controls (220.72,199.33) and (221.25,199.86) .. (221.25,200.51) .. controls (221.25,201.17) and (220.72,201.7) .. (220.06,201.7) .. controls (219.41,201.7) and (218.88,201.17) .. (218.88,200.51) -- cycle ;
\draw  [color={rgb, 255:red, 74; green, 74; blue, 74 }  ,draw opacity=1 ][fill={rgb, 255:red, 155; green, 155; blue, 155 }  ,fill opacity=1 ] (219.13,203.64) .. controls (219.13,203.12) and (219.54,202.7) .. (220.06,202.7) .. controls (220.58,202.7) and (221,203.12) .. (221,203.64) .. controls (221,204.16) and (220.58,204.58) .. (220.06,204.58) .. controls (219.54,204.58) and (219.13,204.16) .. (219.13,203.64) -- cycle ;

\end{tikzpicture}
		\caption{Example of a fat half-grid in the hyperbolic plane with a uniform disk at every lattice point, shown here in the Poincar\'e half-plane model. }
		\label{fig:half-plane}
	\end{figure}

It is clear that any finite planar graph $P$ satisfying $\Delta(P) \leq 4$ embeds topologically into the half-grid, in such a way that will `agree' with any given drawing of $P$.  We thus obtain arbitrarily fat $P$-minors in the plane where the branch sets are uniform solid disks of any given size and the edge-paths meet these disks are right angles. All that remains is to modify these disks so that they are centred on elements of a coarse cover, and the edge-paths meet these disks are prescribed angles. 
This can be achieved as follows. We choose our half-grid to be such that the branch-set disks have radius $R := \varepsilon+r + 5K + 1$. If $a$ lies at the centre of some branch set of the constructed fat $P$-minor, then there is some $b \in N$ which lies at most $\varepsilon$ away from $a$. It is now easy to construct four pairwise distance-$K$ paths inside the disk of radius $R$ at $a$ which terminate at on the circle of radius $r$ based at $b$ at prescribed angles, while preserving the cyclic order these paths arrive in. One such construction is shown in Figure~\ref{fig:circles}. Note that this figure depicts the construction in the Euclidean plane, but it is easy to check that an analogous construction is possible in the hyperbolic plane. 
\end{proof}
	
	\begin{figure}[t]
\centering

\tikzset{every picture/.style={line width=0.75pt}} 

\begin{tikzpicture}[x=0.75pt,y=0.75pt,yscale=-1,xscale=1]

\draw  [fill={rgb, 255:red, 247; green, 247; blue, 247 }  ,fill opacity=1 ] (130.86,157.07) .. controls (114.43,102.52) and (145.34,44.98) .. (199.89,28.55) .. controls (254.43,12.13) and (311.97,43.03) .. (328.4,97.58) .. controls (344.83,152.13) and (313.92,209.66) .. (259.38,226.09) .. controls (204.83,242.52) and (147.29,211.62) .. (130.86,157.07) -- cycle ;
\draw [color={rgb, 255:red, 128; green, 128; blue, 128 }  ,draw opacity=1 ] [dash pattern={on 1.5pt off 3pt}]  (199.89,28.22) -- (229.63,126.99) ;
\draw [color={rgb, 255:red, 128; green, 128; blue, 128 }  ,draw opacity=1 ] [dash pattern={on 1.5pt off 3pt}]  (229.63,126.99) -- (259.38,225.75) ;
\draw [color={rgb, 255:red, 128; green, 128; blue, 128 }  ,draw opacity=1 ] [dash pattern={on 1.5pt off 3pt}]  (229.63,126.99) -- (130.86,156.73) ;
\draw [color={rgb, 255:red, 128; green, 128; blue, 128 }  ,draw opacity=1 ] [dash pattern={on 1.5pt off 3pt}]  (328.4,97.24) -- (229.63,126.99) ;

\draw    (135.97,25.01) .. controls (168.03,2.21) and (178.47,25.84) .. (199.89,28.55) ;
\draw [shift={(199.89,28.55)}, rotate = 7.21] [color={rgb, 255:red, 0; green, 0; blue, 0 }  ][fill={rgb, 255:red, 0; green, 0; blue, 0 }  ][line width=0.75]      (0, 0) circle [x radius= 2.01, y radius= 2.01]   ;
\draw [shift={(133.98,26.46)}, rotate = 323.13] [color={rgb, 255:red, 0; green, 0; blue, 0 }  ][line width=0.75]    (6.56,-1.97) .. controls (4.17,-0.84) and (1.99,-0.18) .. (0,0) .. controls (1.99,0.18) and (4.17,0.84) .. (6.56,1.97)   ;
\draw    (67.19,156.54) .. controls (97.81,181.05) and (109.78,154.36) .. (131.2,157.07) ;
\draw [shift={(131.2,157.07)}, rotate = 7.21] [color={rgb, 255:red, 0; green, 0; blue, 0 }  ][fill={rgb, 255:red, 0; green, 0; blue, 0 }  ][line width=0.75]      (0, 0) circle [x radius= 2.01, y radius= 2.01]   ;
\draw [shift={(65.29,154.98)}, rotate = 40.31] [color={rgb, 255:red, 0; green, 0; blue, 0 }  ][line width=0.75]    (6.56,-1.97) .. controls (4.17,-0.84) and (1.99,-0.18) .. (0,0) .. controls (1.99,0.18) and (4.17,0.84) .. (6.56,1.97)   ;
\draw    (350.65,217.47) .. controls (309.66,203.36) and (314.07,225.87) .. (259.71,226.09) ;
\draw [shift={(259.71,226.09)}, rotate = 179.77] [color={rgb, 255:red, 0; green, 0; blue, 0 }  ][fill={rgb, 255:red, 0; green, 0; blue, 0 }  ][line width=0.75]      (0, 0) circle [x radius= 2.01, y radius= 2.01]   ;
\draw [shift={(352.56,218.14)}, rotate = 199.77] [color={rgb, 255:red, 0; green, 0; blue, 0 }  ][line width=0.75]    (6.56,-1.97) .. controls (4.17,-0.84) and (1.99,-0.18) .. (0,0) .. controls (1.99,0.18) and (4.17,0.84) .. (6.56,1.97)   ;
\draw [line width=0.75]    (362.64,115.09) .. controls (338.11,112.94) and (343.35,92.52) .. (328.4,97.24) ;
\draw [shift={(328.4,97.24)}, rotate = 162.47] [color={rgb, 255:red, 0; green, 0; blue, 0 }  ][fill={rgb, 255:red, 0; green, 0; blue, 0 }  ][line width=0.75]      (0, 0) circle [x radius= 2.01, y radius= 2.01]   ;
\draw [shift={(365,115.24)}, rotate = 182.12] [color={rgb, 255:red, 0; green, 0; blue, 0 }  ][line width=0.75]    (6.56,-1.97) .. controls (4.17,-0.84) and (1.99,-0.18) .. (0,0) .. controls (1.99,0.18) and (4.17,0.84) .. (6.56,1.97)   ;
\draw    (229.04,127.41) ;
\draw [shift={(229.04,127.41)}, rotate = 0] [color={rgb, 255:red, 0; green, 0; blue, 0 }  ][fill={rgb, 255:red, 0; green, 0; blue, 0 }  ][line width=0.75]      (0, 0) circle [x radius= 3.35, y radius= 3.35]   ;
\draw  [fill={rgb, 255:red, 206; green, 206; blue, 206 }  ,fill opacity=1 ] (237.12,134.81) .. controls (237.12,126.26) and (244.05,119.33) .. (252.6,119.33) .. controls (261.15,119.33) and (268.08,126.26) .. (268.08,134.81) .. controls (268.08,143.36) and (261.15,150.29) .. (252.6,150.29) .. controls (244.05,150.29) and (237.12,143.36) .. (237.12,134.81) -- cycle ;
\draw    (252.6,134.81) ;
\draw [shift={(252.6,134.81)}, rotate = 0] [color={rgb, 255:red, 0; green, 0; blue, 0 }  ][fill={rgb, 255:red, 0; green, 0; blue, 0 }  ][line width=0.75]      (0, 0) circle [x radius= 3.35, y radius= 3.35]   ;
\draw   (181.69,127.41) .. controls (181.69,101.25) and (202.89,80.06) .. (229.04,80.06) .. controls (255.19,80.06) and (276.39,101.25) .. (276.39,127.41) .. controls (276.39,153.56) and (255.19,174.76) .. (229.04,174.76) .. controls (202.89,174.76) and (181.69,153.56) .. (181.69,127.41) -- cycle ;
\draw    (252.6,134.81) -- (268.08,134.81) ;
\draw [shift={(268.08,134.81)}, rotate = 45] [color={rgb, 255:red, 0; green, 0; blue, 0 }  ][line width=0.75]    (-2.24,0) -- (2.24,0)(0,2.24) -- (0,-2.24)   ;
\draw [shift={(252.6,134.81)}, rotate = 45] [color={rgb, 255:red, 0; green, 0; blue, 0 }  ][line width=0.75]    (-2.24,0) -- (2.24,0)(0,2.24) -- (0,-2.24)   ;
\draw    (237.12,134.81) -- (252.6,134.81) ;
\draw [shift={(252.6,134.81)}, rotate = 45] [color={rgb, 255:red, 0; green, 0; blue, 0 }  ][line width=0.75]    (-2.24,0) -- (2.24,0)(0,2.24) -- (0,-2.24)   ;
\draw [shift={(237.12,134.81)}, rotate = 45] [color={rgb, 255:red, 0; green, 0; blue, 0 }  ][line width=0.75]    (-2.24,0) -- (2.24,0)(0,2.24) -- (0,-2.24)   ;
\draw    (252.6,134.81) -- (252.6,150.29) ;
\draw [shift={(252.6,150.29)}, rotate = 135] [color={rgb, 255:red, 0; green, 0; blue, 0 }  ][line width=0.75]    (-2.24,0) -- (2.24,0)(0,2.24) -- (0,-2.24)   ;
\draw [shift={(252.6,134.81)}, rotate = 135] [color={rgb, 255:red, 0; green, 0; blue, 0 }  ][line width=0.75]    (-2.24,0) -- (2.24,0)(0,2.24) -- (0,-2.24)   ;
\draw    (252.6,134.81) -- (252.6,119.33) ;
\draw [shift={(252.6,119.33)}, rotate = 315] [color={rgb, 255:red, 0; green, 0; blue, 0 }  ][line width=0.75]    (-2.24,0) -- (2.24,0)(0,2.24) -- (0,-2.24)   ;
\draw [shift={(252.6,134.81)}, rotate = 315] [color={rgb, 255:red, 0; green, 0; blue, 0 }  ][line width=0.75]    (-2.24,0) -- (2.24,0)(0,2.24) -- (0,-2.24)   ;
\draw    (182.13,134.81) -- (237.12,134.81) ;
\draw [shift={(237.12,134.81)}, rotate = 45] [color={rgb, 255:red, 0; green, 0; blue, 0 }  ][line width=0.75]    (-2.24,0) -- (2.24,0)(0,2.24) -- (0,-2.24)   ;
\draw [shift={(182.13,134.81)}, rotate = 45] [color={rgb, 255:red, 0; green, 0; blue, 0 }  ][line width=0.75]    (-2.24,0) -- (2.24,0)(0,2.24) -- (0,-2.24)   ;
\draw    (268.08,134.81) -- (275.95,134.81) ;
\draw [shift={(275.95,134.81)}, rotate = 45] [color={rgb, 255:red, 0; green, 0; blue, 0 }  ][line width=0.75]    (-2.24,0) -- (2.24,0)(0,2.24) -- (0,-2.24)   ;
\draw [shift={(268.08,134.81)}, rotate = 45] [color={rgb, 255:red, 0; green, 0; blue, 0 }  ][line width=0.75]    (-2.24,0) -- (2.24,0)(0,2.24) -- (0,-2.24)   ;
\draw    (252.6,119.33) -- (252.6,86.61) ;
\draw [shift={(252.6,86.61)}, rotate = 315] [color={rgb, 255:red, 0; green, 0; blue, 0 }  ][line width=0.75]    (-2.24,0) -- (2.24,0)(0,2.24) -- (0,-2.24)   ;
\draw [shift={(252.6,119.33)}, rotate = 315] [color={rgb, 255:red, 0; green, 0; blue, 0 }  ][line width=0.75]    (-2.24,0) -- (2.24,0)(0,2.24) -- (0,-2.24)   ;
\draw    (252.6,150.29) -- (252.6,168) ;
\draw [shift={(252.6,168)}, rotate = 135] [color={rgb, 255:red, 0; green, 0; blue, 0 }  ][line width=0.75]    (-2.24,0) -- (2.24,0)(0,2.24) -- (0,-2.24)   ;
\draw [shift={(252.6,150.29)}, rotate = 135] [color={rgb, 255:red, 0; green, 0; blue, 0 }  ][line width=0.75]    (-2.24,0) -- (2.24,0)(0,2.24) -- (0,-2.24)   ;
\draw  [color={rgb, 255:red, 155; green, 155; blue, 155 }  ,draw opacity=1 ][dash pattern={on 1.5pt off 2.25pt}] (153.45,150.17) .. controls (140.87,108.42) and (164.53,64.39) .. (206.28,51.81) .. controls (248.03,39.24) and (292.06,62.89) .. (304.64,104.64) .. controls (317.21,146.39) and (293.56,190.43) .. (251.81,203) .. controls (210.06,215.57) and (166.02,191.92) .. (153.45,150.17) -- cycle ;
\draw  [color={rgb, 255:red, 155; green, 155; blue, 155 }  ,draw opacity=1 ][dash pattern={on 1.5pt off 2.25pt}] (142.44,153.49) .. controls (128.03,105.66) and (155.13,55.21) .. (202.96,40.8) .. controls (250.79,26.4) and (301.24,53.49) .. (315.65,101.32) .. controls (330.05,149.15) and (302.95,199.61) .. (255.12,214.01) .. controls (207.29,228.41) and (156.84,201.32) .. (142.44,153.49) -- cycle ;
\draw  [color={rgb, 255:red, 155; green, 155; blue, 155 }  ,draw opacity=1 ][dash pattern={on 1.5pt off 2.25pt}] (164.99,146.7) .. controls (154.33,111.32) and (174.38,74.01) .. (209.75,63.35) .. controls (245.13,52.7) and (282.44,72.74) .. (293.09,108.12) .. controls (303.75,143.49) and (283.71,180.81) .. (248.33,191.46) .. controls (212.96,202.11) and (175.64,182.07) .. (164.99,146.7) -- cycle ;
\draw  [color={rgb, 255:red, 155; green, 155; blue, 155 }  ,draw opacity=1 ][dash pattern={on 1.5pt off 2.25pt}] (175.02,143.67) .. controls (166.04,113.84) and (182.94,82.37) .. (212.77,73.39) .. controls (242.61,64.4) and (274.07,81.31) .. (283.06,111.14) .. controls (292.04,140.97) and (275.14,172.44) .. (245.31,181.42) .. controls (215.48,190.41) and (184.01,173.51) .. (175.02,143.67) -- cycle ;
\draw [line width=0.75]    (182.13,134.4) -- (173.33,134.4) -- (173.66,138.78) -- (175.02,143.67) -- (130.86,156.73) ;
\draw [line width=0.75]    (252.6,86.61) -- (264.12,69.84) -- (254.7,64.8) -- (243.94,61.1) -- (235.2,60.09) -- (222.09,60.09) -- (209.75,63.35) -- (199.89,28.55) ;
\draw [line width=0.75]    (275.95,134.81) -- (307.83,134.81) -- (308.17,127.34) -- (307.5,113.22) -- (304.64,104.3) -- (328.4,97.24) ;
\draw [line width=0.75]    (259.38,226.09) -- (255.12,214.01) -- (268.15,210.07) -- (274.21,205.69) -- (252.6,168) ;

\draw (215.52,115.96) node [anchor=north west][inner sep=0.75pt]  [font=\scriptsize]  {$a$};
\draw (254.6,138.21) node [anchor=north west][inner sep=0.75pt]  [font=\scriptsize]  {$b$};

\end{tikzpicture}

\caption{An example of a construction to modify the centres of the disks in Lemma~\ref{lem:dial} to align with points of a coarse cover, while also letting us `rotate' the edge paths so that they arrive at prescribed angles. }\label{fig:circles}

\end{figure}
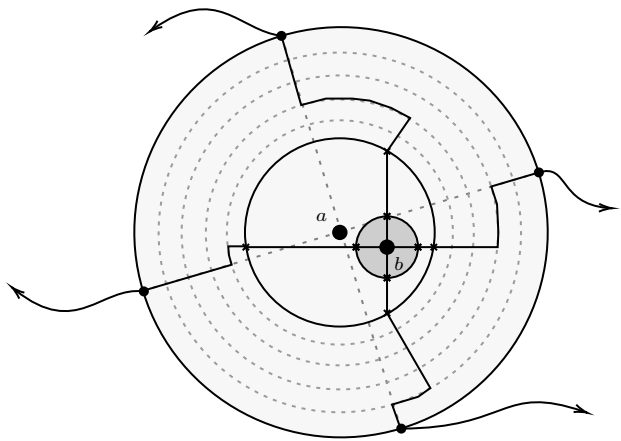

\subsection{Building fat minors using surface subgroups}

We now apply Lemma~\ref{lem:dial} and prove Theorem~\ref{thm:surface-subgroup} of the introduction.

\begin{theorem}\label{thm:surface-subgroup}
	Let $G$ be a one-ended finitely generated group. Suppose $G$ contains an infinite-index surface subgroup $H$. Then $G$ is not asymptotically minor-excluded. 
\end{theorem}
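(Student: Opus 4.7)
The plan is to show, following the outline in the section introduction, that every finite graph $F$ is an asymptotic minor of a Cayley graph $X$ of $G$. Given $F$ and a target fatness $K$, I would first fix a drawing of $F$ in the plane with transversal edge crossings and replace each crossing by a degree-$4$ \emph{marked} vertex; this produces a planar simplicial graph $P$ of maximum degree $4$ in which the four edges at a marked vertex split into two opposite ``through'' pairs encoding the two crossing edges of $F$.

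Let $Y \subset X$ be a tubular neighborhood of $H$ on which $H$ acts freely and cocompactly; since $H$ is a surface subgroup, $Y$ is quasi-isometric to the Euclidean or hyperbolic plane. I would apply Lemma~\ref{lem:dial} with a very large fatness $K' \gg K$, taking the coarse cover in its hypothesis to be the image of an $H$-orbit under the quasi-isometry to the plane. This produces a $K'$-fat $P$-minor in the plane whose branch sets are uniform disks centered at orbit points, with edge paths meeting each disk at prescribed angles in a prescribed cyclic order. Transferring through a quasi-inverse of the quasi-isometry yields a (still very fat) $P$-minor $M_0 \subset Y$ whose branch sets $U_v$ are centered near prescribed group elements $h_v \in H$.

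Using Lemma~\ref{lem:normal-ray-subgroup}, I next fix a single infinite normal ray $\rho$ to $Y$ based near some point $y_0 \in H$; by equivariance, $h \cdot \rho$ is a normal ray based near $h y_0$ for every $h \in H$. To convert $M_0$ into a fat $F$-minor, I ``uncross'' each marked vertex $v$ as follows: writing $P_1, P_2, P_3, P_4$ for the four edge paths meeting $U_v$ in cyclic order, I connect $P_1$ to $P_3$ by a short path through the interior of $U_v$ (staying in $Y$), and connect $P_2$ to $P_4$ by a \emph{detour} that leaves $Y$ along a pair of normal rays to some depth $T \gg K$ and closes up deep inside $X \setminus N_{T/2}(Y)$. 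Concatenating these uncrossings along each edge of $F$ gives edge paths for an $F$-minor, whose branch sets are inherited from $M_0$ at the non-marked vertices of $P$. With $K'$ and $T$ chosen large enough compared to $K$, the two through-connections at each marked vertex are at distance $\gtrsim T$ from each other, so the resulting minor is $K$-fat.

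The main obstacle is executing the detour consistently, since a priori $X \setminus N_T(Y)$ could have several unbounded components, and the two ray tips at depth $T$ need to lie in a common one without the connecting arc colliding with other parts of the minor. My plan is to exploit $H$-equivariance throughout: construct a single ``model'' detour near $y_0$ by hand, using the one-endedness of $G$ together with $|G:H|=\infty$ to produce a connecting arc far from $Y$, and then realize every other detour as the corresponding $H$-translate of the model. The same equivariance, combined with taking $K'$ (and hence the spacing of the chosen $H$-orbit in the plane) large enough, should keep detours at distinct marked vertices pairwise far apart and far from the branch sets and edge paths inherited from $M_0$, completing the verification.
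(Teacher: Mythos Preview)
Your overall strategy matches the paper's: draw $F$ with crossings, form the planar graph $P$, use Lemma~\ref{lem:dial} to get a controlled fat $P$-minor in the plane, push it to $Y$ via the quasi-inverse, and resolve each marked vertex by joining one opposite pair of edge paths through $Y$ and the other pair via a ``bridge'' that leaves $Y$ and returns. Building a single model bridge and using its $H$-translates is also exactly what the paper does.

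The gap is in the alignment of the bridges with the planar picture. You take the coarse cover $N$ to be the image of an $H$-orbit, so at a marked vertex $v$ the branch disk is centred at $\varphi(h_v y_0)$ and the four edge paths meet it at four prescribed boundary points. Your translated model detour, on the other hand, has its feet at $h_v a,\, h_v b$ for some fixed $a,b$ near $y_0$. Since $\varphi$ is not $H$-equivariant, you have no control over where $\varphi(h_v a),\varphi(h_v b)$ sit inside the disk relative to those four prescribed points; in particular you cannot ensure they stay far from the through-segment joining the $P_1,P_3$ endpoints. The short connectors you would need from the $P_2,P_4$ endpoints to the detour feet could then come arbitrarily close to the through-path, destroying the fatness. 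The paper fixes this by constructing the bridge $p$ \emph{first}, with endpoints at a prescribed large distance $D$, and then taking the coarse cover $N$ to consist of the \emph{midpoints} $m_h$ of the segments $[\varphi(h\cdot p(0)),\,\varphi(h\cdot p(s))]$. This guarantees that at each marked disk the two bridge feet lie on a known diameter, and the through-segment $Q_v$ is drawn along the perpendicular diameter; the separation between them is then controlled by $D$ and can be made as large as required before pushing back to $Y$.
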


\begin{proof}

Let $X$ be a Cayley graph of $G$. 
Let $F$ be an arbitrary connected finite simplicial graph. We assume without loss of generality that $\Delta(F) \leq 4$, as any graph certainly embeds as a minor into a graph with this property. 
Fix $K > 0$. We will construct a $K$-fat $F$-minor in $X$. Throughout our construction, note that we will make no attempt to optimise our choice of constants.

    Let $Y \subset X$ be a connected tubular neighbourhood of $H$, so that $H \actson Y$ freely and cocompactly. Recall that the inclusion of $Y \into X$ is a coarse embedding. Therefore, we can fix a choice of function $\xi_- : [0, \infty) \to [0,\infty)$ such that 
    $$
        \xi_-(\dist_Y(x,y)) \leq \dist_X(x,y),
    $$
    for all $x,y \in Y$ (we will not need the upper bound function $\xi_+$ here). 
    We assume without loss of generality that $\xi_-$ is increasing, surjective, and every value in $[0,\infty)$ except for 0 has exactly one preimage under $\xi_-$. 
    Since $H$ acts on $Y$ properly discontinuously and cocompactly, we have by the Švarc–Milnor lemma that $Y$ is quasi-isometric to $H$, and thus to either the Euclidean plane $\R^2$ or the hyperbolic plane $\mathbb H^2$ as $H$ is a surface group. Let us assume without loss of generality that we have a quasi-isometry $\varphi : Y \to \R^2$ with quasi-inverse $\psi : \R^2 \to Y$, as the argument is identical in the hyperbolic case.
    Let $\lambda \geq 1 $ now be fixed so that $\varphi$ and $\psi$ are both $\lambda$-quasi-isometries, and that $\psi$ is a $\lambda$-quasi-inverse to $\varphi$ (and vice-versa).
    
    Our first goal is to construct a certain gadget which will allow us to build `coarse crossings'. 
    Using Lemma~\ref{lem:normal-ray-subgroup}, let $\rho : [0, \infty) \to X$ be a one-way infinite geodesic ray based in $Y$ such that $\dist_X(\rho(t), Y) = t$ for every $t \geq 0$. That is, $\rho$ is normal to $Y$.
    Let $C = 10K$. 
	Fix $D > 0$ to be some large constant compared to $C$, $\lambda$. For example, it is sufficient to take 
    $$
    D = 2\lambda^2(10\lambda + \xi_-^{-1}(C))
    $$
		
	Since $X$ is one-ended, it is clear that we can find a path $\alpha :[0,\ell] \to X$ with the following properties:
	\begin{enumerate}
		\item $\alpha(\ell) \in \rho$ and $\alpha(0) \in Y$.
            \item $\alpha$ is disjoint from $N_D(\rho(0))$.
		\item The initial segment $\alpha|_{[0,C]}$ is a geodesic in $X$ which is normal to $Y$.
		\item Otherwise, the terminal segment $\alpha|_{[C+1, \ell]}$ is disjoint from $N_{C}(Y)$.
	\end{enumerate}
	Indeed, this can be achieved by taking a large neighbourhood $N$ of $\rho(0)$ and taking a shortest path from some $\rho(t)$ to $Y$ through $X\setminus N$, then replacing the final segment of this path with a normal. 
	In particular, the union $\alpha \cup \rho$ contains a path $p : [0,s] \to X$ such that
	\begin{enumerate}
		\item $\dist_X(p(0), p(s)) > D$,
		\item The initial and terminal segments $p|_{[0,C]}$ and $p|_{[s-C , s]}$ are disjoint geodesics which are normal to $Y$, and $p$ is otherwise disjoint from the $C$-neighbourhood of $Y$. 
	\end{enumerate}	
    Let $R = \diam_X(p)$.
	Intuitively, the goal now is to use copies of $p$ to build `bridges' which allow us to take fat `planar drawings' of non-planar graphs in $Y$ and turn them into fat minors in $X$. Indeed, from now on we shall refer to each translate of $p$ as a \emph{bridge}.

	For each $h \in H$ write $p_h := h \cdot p$. 
	Let 
	$$
	x_h = \varphi(p_h(0)), \ \ y_h = \varphi(p_h(s)).
	$$
	Note that $\tfrac 1 \lambda D - \lambda < \dist_{\R^2}(x_h, y_h) < \lambda D + \lambda$ for all $h \in H$. 
	Let $L_h$ the line segment in $\R^2$ connecting $x_h$ to $y_h$, and let $m_h \in L_h$ denote its midpoint. Since the action of $H$ on $Y$ is cobounded, it is clear that $N := \{m_h : h \in H\}$ is a coarse cover of $\R^2$.

	Consider some planar, straight-line drawing of $F$, where edges are allowed to cross other edges. We view this drawing itself as a finite simplicial planar graph $P$, where `crossings' in the aforementioned drawing become degree-four vertices of $P$. Note that $\Delta(P) \leq 4$, and $P$ comes naturally equipped with a fixed drawing. 
	If $v \in VP$ corresponds to a crossing in $F$ then we call $v$ a \textit{marked vertex}, and denote the set of marked vertices  by $\widehat VP \subset VP$. 

    Let $r = 3(\lambda D + \lambda)$. Let $K' > 0$ be some fixed constant which is chosen to be sufficiently large compared to $C$, $\lambda$, $r$, and $R$. For example, it is sufficient to take 
    $$
    K' = 4\lambda (10\lambda + \xi_-^{-1}(C + 4R)) + 10r.
    $$
    Using the carefully-drawn lemma (\ref{lem:dial}), let $M \subset \R^2$ be a $K'$-fat $P$-minor where
    \begin{enumerate}

         \item For every vertex $v \in VP$, the branch set $U_v$ is a solid circular disk of radius $r$ based at a point in $N$. 
 
		\item The cyclic order in which the edge paths leave the branch sets agrees with the cyclic  order with which the edges leave the vertices in the given drawing of $P$.

        \item Let $v \in \widehat VP$ be a marked vertex of $P$, and say $U_v$ is centred on $m_h \in N$. Then the edge paths which meet $U_v$ do so at four evenly spaced points. Moreover, if we extend $L_h$ in both directions so that it forms a diameter of $U_v$, then we also assume that two of edge paths in $M$ which meet $U_v$ do so at exactly where this diameter intersects $\partial U_v$.

     \end{enumerate}

     We now need to set up some more notation. If $x_h, y_h, m_h \in U_v$ then let us write $x_v := x_h$, and so on and also write $L_v := L_h$. We also relabel the corresponding bridge $p_v := p_h$. Given $e = uv \in EP$, let $P_e = P_{uv}$ denote the corresponding edge path in $M$. 
     For each marked vertex $v \in \widehat VP$, let $a_v^{1}, \ldots, a_v^4 \in \partial U_v$ be where the edge paths of $M$ meet $U_v$, cyclically ordered so that $a_v^1$, $x_v$, $y_v$, and $a_v^3$ appear on a common diameter in this given order.
     If $P_{uw}$ is an edge path such that that $u$ is a marked vertex, and this path terminates at either $a_u^1$ (or $a_u^3$), then we extend $P_{uw}$ by adjoining the line segment connecting $a_u^1$ to $x_u$ (resp. $a_u^3$ to $y_u$). If $w$ is also a marked vertex then we extend $P_{uw}$ similarly at the other end. Call the resulting extended path $P'_{uw}$. If neither $u$ nor $w$ are marked then just set $P'_{uw} = P_{uw}$. Finally, given marked $v \in \widehat VP$ we let $Q_v$ denote the line segment connecting $a_v^2$ to $a_v^4$. 
     
     We now consider the figure
     $$
     M' = \bigcup_{v \in VP \setminus \widehat VP} U_v \cup \bigcup_{v \in \widehat VP} Q_v \cup \bigcup_{e\in EP} P_e'. 
     $$
     In other words, we delete each $U_v$ where $v \in \widehat VP$ is a marked vertex, then add back line segments connecting $a_v^2$ to $a_v^4$, $a_v^1$ to $x_v$, and $a_v^3$ to $y_v$.
          See Figure~\ref{fig:minor-in-plane} for a cartoon of the construction of $M'$.
     
     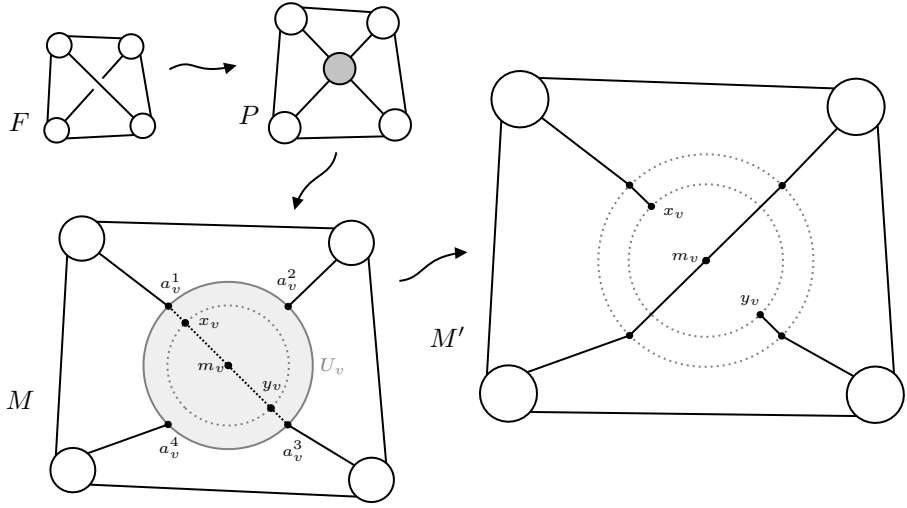
\begin{figure}
     	\centering

\tikzset{every picture/.style={line width=0.75pt}} 

\begin{tikzpicture}[x=0.75pt,y=0.75pt,yscale=-1,xscale=1]

\draw    (119.32,83.37) -- (153.61,81.62) ;
\draw   (110.62,83.87) .. controls (108.08,81.6) and (107.85,77.7) .. (110.12,75.16) .. controls (112.38,72.62) and (116.28,72.39) .. (118.82,74.66) .. controls (121.37,76.93) and (121.59,80.82) .. (119.32,83.37) .. controls (117.06,85.91) and (113.16,86.13) .. (110.62,83.87) -- cycle ;
\draw   (111.92,41.11) .. controls (109.38,38.85) and (109.15,34.95) .. (111.42,32.41) .. controls (113.69,29.87) and (117.58,29.64) .. (120.13,31.91) .. controls (122.67,34.17) and (122.89,38.07) .. (120.63,40.61) .. controls (118.36,43.16) and (114.46,43.38) .. (111.92,41.11) -- cycle ;
\draw   (147.78,41.71) .. controls (145.23,39.44) and (145.01,35.54) .. (147.28,33) .. controls (149.54,30.46) and (153.44,30.24) .. (155.98,32.5) .. controls (158.52,34.77) and (158.75,38.66) .. (156.48,41.21) .. controls (154.22,43.75) and (150.32,43.97) .. (147.78,41.71) -- cycle ;
\draw   (153.61,81.62) .. controls (151.07,79.35) and (150.84,75.45) .. (153.11,72.91) .. controls (155.37,70.37) and (159.27,70.14) .. (161.81,72.41) .. controls (164.36,74.68) and (164.58,78.57) .. (162.31,81.12) .. controls (160.05,83.66) and (156.15,83.88) .. (153.61,81.62) -- cycle ;
\draw    (156.48,41.21) -- (161.81,72.41) ;
\draw    (110.12,75.16) -- (111.92,41.11) ;
\draw    (120.13,31.91) -- (147.28,33) ;
\draw    (120.63,40.61) -- (153.11,72.91) ;
\draw    (118.82,74.66) -- (133.16,58.58) ;
\draw    (137.55,53.19) -- (147.78,41.71) ;
\draw    (234.31,83.35) -- (278.29,82.32) ;
\draw   (223.15,83.69) .. controls (219.97,80.7) and (219.82,75.7) .. (222.81,72.52) .. controls (225.8,69.34) and (230.8,69.19) .. (233.98,72.19) .. controls (237.16,75.18) and (237.31,80.18) .. (234.31,83.35) .. controls (231.32,86.53) and (226.32,86.68) .. (223.15,83.69) -- cycle ;
\draw   (226.32,28.98) .. controls (223.14,25.99) and (222.99,20.99) .. (225.98,17.81) .. controls (228.97,14.64) and (233.97,14.49) .. (237.15,17.48) .. controls (240.32,20.47) and (240.47,25.47) .. (237.48,28.64) .. controls (234.49,31.82) and (229.49,31.97) .. (226.32,28.98) -- cycle ;
\draw   (272.22,31) .. controls (269.04,28.01) and (268.89,23.01) .. (271.88,19.83) .. controls (274.87,16.65) and (279.87,16.5) .. (283.05,19.49) .. controls (286.23,22.49) and (286.38,27.49) .. (283.38,30.66) .. controls (280.39,33.84) and (275.39,33.99) .. (272.22,31) -- cycle ;
\draw   (278.29,82.32) .. controls (275.11,79.32) and (274.96,74.32) .. (277.95,71.15) .. controls (280.94,67.97) and (285.94,67.82) .. (289.12,70.81) .. controls (292.29,73.8) and (292.44,78.8) .. (289.45,81.98) .. controls (286.46,85.16) and (281.46,85.31) .. (278.29,82.32) -- cycle ;
\draw    (283.38,30.66) -- (289.12,70.81) ;
\draw    (222.81,72.52) -- (226.32,28.98) ;
\draw    (237.15,17.48) -- (271.88,19.83) ;
\draw    (237.48,28.64) -- (277.95,71.15) ;
\draw    (233.98,72.19) -- (252.9,52.09) ;
\draw    (258.71,45.34) -- (272.22,31) ;
\draw  [fill={rgb, 255:red, 195; green, 195; blue, 195 }  ,fill opacity=1 ] (250.55,54) .. controls (247.38,51.01) and (247.23,46.01) .. (250.22,42.84) .. controls (253.21,39.66) and (258.21,39.51) .. (261.39,42.5) .. controls (264.56,45.49) and (264.71,50.49) .. (261.72,53.67) .. controls (258.73,56.84) and (253.73,56.99) .. (250.55,54) -- cycle ;
\draw    (171,48.6) .. controls (184.06,41.69) and (181.63,51.73) .. (201.94,47.24) ;
\draw [shift={(204.6,46.6)}, rotate = 165.5] [fill={rgb, 255:red, 0; green, 0; blue, 0 }  ][line width=0.08]  [draw opacity=0] (6.25,-3) -- (0,0) -- (6.25,3) -- cycle    ;
\draw    (254.34,90.34) .. controls (251.55,103.89) and (240.22,100.23) .. (234.29,116.64) ;
\draw [shift={(233.4,119.4)}, rotate = 285.95] [fill={rgb, 255:red, 0; green, 0; blue, 0 }  ][line width=0.08]  [draw opacity=0] (6.25,-3) -- (0,0) -- (6.25,3) -- cycle    ;
\draw  [color={rgb, 255:red, 128; green, 128; blue, 128 }  ,draw opacity=1 ][fill={rgb, 255:red, 239; green, 239; blue, 239 }  ,fill opacity=1 ] (170.34,227.23) .. controls (153.89,210.74) and (153.98,184.1) .. (170.53,167.72) .. controls (187.08,151.34) and (213.84,151.42) .. (230.29,167.91) .. controls (246.74,184.39) and (246.66,211.03) .. (230.1,227.41) .. controls (213.55,243.8) and (186.79,243.71) .. (170.34,227.23) -- cycle ;
\draw   (119.98,142.14) .. controls (115.3,138.11) and (114.79,131.06) .. (118.83,126.4) .. controls (122.88,121.74) and (129.96,121.23) .. (134.64,125.26) .. controls (139.32,129.29) and (139.83,136.34) .. (135.78,141) .. controls (131.73,145.66) and (124.66,146.17) .. (119.98,142.14) -- cycle ;
\draw   (254.54,144.34) .. controls (249.86,140.3) and (249.35,133.26) .. (253.4,128.6) .. controls (257.45,123.94) and (264.52,123.43) .. (269.2,127.46) .. controls (273.88,131.49) and (274.39,138.54) .. (270.35,143.2) .. controls (266.3,147.86) and (259.22,148.37) .. (254.54,144.34) -- cycle ;
\draw   (264.54,263.54) .. controls (259.86,259.5) and (259.35,252.46) .. (263.4,247.8) .. controls (267.45,243.14) and (274.52,242.63) .. (279.2,246.66) .. controls (283.88,250.69) and (284.39,257.74) .. (280.35,262.4) .. controls (276.3,267.06) and (269.22,267.57) .. (264.54,263.54) -- cycle ;
\draw   (115.47,258.64) .. controls (110.79,254.61) and (110.28,247.56) .. (114.32,242.9) .. controls (118.37,238.24) and (125.45,237.73) .. (130.13,241.76) .. controls (134.8,245.79) and (135.32,252.84) .. (131.27,257.5) .. controls (127.22,262.16) and (120.15,262.67) .. (115.47,258.64) -- cycle ;
\draw    (119.98,142.14) -- (114.32,242.9) ;
\draw    (134.64,125.26) -- (253.4,128.6) ;
\draw    (279.2,246.66) -- (270.35,143.2) ;
\draw    (264.54,263.54) -- (131.27,257.5) ;
\draw  [color={rgb, 255:red, 128; green, 128; blue, 128 }  ,draw opacity=1 ][dash pattern={on 0.75pt off 1.5pt}] (178.8,218.85) .. controls (167,207.02) and (167.06,187.9) .. (178.94,176.14) .. controls (190.82,164.39) and (210.02,164.45) .. (221.83,176.28) .. controls (233.64,188.11) and (233.57,207.23) .. (221.69,218.99) .. controls (209.81,230.75) and (190.61,230.69) .. (178.8,218.85) -- cycle ;
\draw  [dash pattern={on 0.75pt off 0.75pt}]  (170.53,167.72) -- (230.1,227.41) ;
\draw [shift={(230.1,227.41)}, rotate = 45.06] [color={rgb, 255:red, 0; green, 0; blue, 0 }  ][fill={rgb, 255:red, 0; green, 0; blue, 0 }  ][line width=0.75]      (0, 0) circle [x radius= 1.34, y radius= 1.34]   ;
\draw [shift={(170.53,167.72)}, rotate = 45.06] [color={rgb, 255:red, 0; green, 0; blue, 0 }  ][fill={rgb, 255:red, 0; green, 0; blue, 0 }  ][line width=0.75]      (0, 0) circle [x radius= 1.34, y radius= 1.34]   ;
\draw    (130.13,241.76) -- (170.34,227.23) ;
\draw [shift={(170.34,227.23)}, rotate = 340.13] [color={rgb, 255:red, 0; green, 0; blue, 0 }  ][fill={rgb, 255:red, 0; green, 0; blue, 0 }  ][line width=0.75]      (0, 0) circle [x radius= 1.34, y radius= 1.34]   ;
\draw    (135.78,141) -- (170.34,167.72) ;
\draw    (230.1,227.41) -- (263.4,247.8) ;
\draw    (221.69,218.99) -- (221.69,218.99) ;
\draw [shift={(221.69,218.99)}, rotate = 270.26] [color={rgb, 255:red, 0; green, 0; blue, 0 }  ][fill={rgb, 255:red, 0; green, 0; blue, 0 }  ][line width=0.75]      (0, 0) circle [x radius= 1.34, y radius= 1.34]   ;
\draw [shift={(221.69,218.99)}, rotate = 270.26] [color={rgb, 255:red, 0; green, 0; blue, 0 }  ][fill={rgb, 255:red, 0; green, 0; blue, 0 }  ][line width=0.75]      (0, 0) circle [x radius= 1.34, y radius= 1.34]   ;
\draw    (178.94,176.14) -- (178.94,176.14) ;
\draw [shift={(178.94,176.14)}, rotate = 270.26] [color={rgb, 255:red, 0; green, 0; blue, 0 }  ][fill={rgb, 255:red, 0; green, 0; blue, 0 }  ][line width=0.75]      (0, 0) circle [x radius= 1.34, y radius= 1.34]   ;
\draw [shift={(178.94,176.14)}, rotate = 270.26] [color={rgb, 255:red, 0; green, 0; blue, 0 }  ][fill={rgb, 255:red, 0; green, 0; blue, 0 }  ][line width=0.75]      (0, 0) circle [x radius= 1.34, y radius= 1.34]   ;
\draw    (254.54,144.34) -- (230.29,167.91) ;
\draw [shift={(230.29,167.91)}, rotate = 135.82] [color={rgb, 255:red, 0; green, 0; blue, 0 }  ][fill={rgb, 255:red, 0; green, 0; blue, 0 }  ][line width=0.75]      (0, 0) circle [x radius= 1.34, y radius= 1.34]   ;
\draw  [dash pattern={on 0.75pt off 0.75pt}]  (200.32,197.57) -- (200.5,197.57) ;
\draw [shift={(200.5,197.57)}, rotate = 360] [color={rgb, 255:red, 0; green, 0; blue, 0 }  ][fill={rgb, 255:red, 0; green, 0; blue, 0 }  ][line width=0.75]      (0, 0) circle [x radius= 1.34, y radius= 1.34]   ;
\draw [shift={(200.32,197.57)}, rotate = 360] [color={rgb, 255:red, 0; green, 0; blue, 0 }  ][fill={rgb, 255:red, 0; green, 0; blue, 0 }  ][line width=0.75]      (0, 0) circle [x radius= 1.34, y radius= 1.34]   ;
\draw  [color={rgb, 255:red, 128; green, 128; blue, 128 }  ,draw opacity=1 ][dash pattern={on 0.75pt off 1.5pt}] (400.68,182.48) .. controls (379.78,161.53) and (379.88,127.67) .. (400.92,106.86) .. controls (421.95,86.04) and (455.95,86.15) .. (476.86,107.09) .. controls (497.76,128.04) and (497.65,161.9) .. (476.62,182.71) .. controls (455.58,203.53) and (421.58,203.42) .. (400.68,182.48) -- cycle ;
\draw   (336.68,74.35) .. controls (330.73,69.23) and (330.08,60.28) .. (335.23,54.35) .. controls (340.37,48.43) and (349.36,47.79) .. (355.31,52.91) .. controls (361.25,58.03) and (361.9,66.98) .. (356.76,72.9) .. controls (351.62,78.82) and (342.63,79.47) .. (336.68,74.35) -- cycle ;
\draw   (504.45,78.2) .. controls (498.51,73.07) and (497.86,64.12) .. (503,58.2) .. controls (508.14,52.28) and (517.13,51.63) .. (523.08,56.75) .. controls (529.03,61.87) and (529.68,70.83) .. (524.53,76.75) .. controls (519.39,82.67) and (510.4,83.32) .. (504.45,78.2) -- cycle ;
\draw   (514.05,223) .. controls (508.11,217.87) and (507.46,208.92) .. (512.6,203) .. controls (517.74,197.08) and (526.73,196.43) .. (532.68,201.55) .. controls (538.63,206.67) and (539.28,215.63) .. (534.13,221.55) .. controls (528.99,227.47) and (520,228.12) .. (514.05,223) -- cycle ;
\draw   (330.95,222.39) .. controls (325,217.27) and (324.35,208.32) .. (329.49,202.4) .. controls (334.64,196.47) and (343.63,195.83) .. (349.57,200.95) .. controls (355.52,206.07) and (356.17,215.02) .. (351.03,220.94) .. controls (345.88,226.87) and (336.89,227.51) .. (330.95,222.39) -- cycle ;
\draw    (336.68,74.35) -- (329.49,202.4) ;
\draw    (355.31,52.91) -- (503,58.2) ;
\draw    (532.68,201.55) -- (524.53,76.75) ;
\draw    (514.05,223) -- (351.03,220.94) ;
\draw  [color={rgb, 255:red, 128; green, 128; blue, 128 }  ,draw opacity=1 ][dash pattern={on 0.75pt off 1.5pt}] (411.43,171.84) .. controls (396.43,156.8) and (396.5,132.5) .. (411.6,117.56) .. controls (426.7,102.62) and (451.1,102.7) .. (466.1,117.73) .. controls (481.11,132.77) and (481.03,157.07) .. (465.93,172.01) .. controls (450.84,186.95) and (426.43,186.87) .. (411.43,171.84) -- cycle ;
\draw    (349.57,200.95) -- (400.68,182.48) ;
\draw    (356.76,72.9) -- (400.68,106.86) ;
\draw    (476.62,182.71) -- (512.6,203) ;
\draw    (476.62,182.71) -- (465.93,172.01) ;
\draw [shift={(465.93,172.01)}, rotate = 225.06] [color={rgb, 255:red, 0; green, 0; blue, 0 }  ][fill={rgb, 255:red, 0; green, 0; blue, 0 }  ][line width=0.75]      (0, 0) circle [x radius= 1.34, y radius= 1.34]   ;
\draw [shift={(476.62,182.71)}, rotate = 225.06] [color={rgb, 255:red, 0; green, 0; blue, 0 }  ][fill={rgb, 255:red, 0; green, 0; blue, 0 }  ][line width=0.75]      (0, 0) circle [x radius= 1.34, y radius= 1.34]   ;
\draw    (411.6,117.56) -- (400.68,106.86) ;
\draw [shift={(400.68,106.86)}, rotate = 224.43] [color={rgb, 255:red, 0; green, 0; blue, 0 }  ][fill={rgb, 255:red, 0; green, 0; blue, 0 }  ][line width=0.75]      (0, 0) circle [x radius= 1.34, y radius= 1.34]   ;
\draw [shift={(411.6,117.56)}, rotate = 224.43] [color={rgb, 255:red, 0; green, 0; blue, 0 }  ][fill={rgb, 255:red, 0; green, 0; blue, 0 }  ][line width=0.75]      (0, 0) circle [x radius= 1.34, y radius= 1.34]   ;
\draw    (504.45,78.2) -- (476.86,107.1) ;
\draw [color={rgb, 255:red, 0; green, 0; blue, 0 }  ,draw opacity=1 ]   (476.86,107.1) -- (400.68,182.48) ;
\draw [shift={(400.68,182.48)}, rotate = 135.3] [color={rgb, 255:red, 0; green, 0; blue, 0 }  ,draw opacity=1 ][fill={rgb, 255:red, 0; green, 0; blue, 0 }  ,fill opacity=1 ][line width=0.75]      (0, 0) circle [x radius= 1.34, y radius= 1.34]   ;
\draw [shift={(476.86,107.1)}, rotate = 135.3] [color={rgb, 255:red, 0; green, 0; blue, 0 }  ,draw opacity=1 ][fill={rgb, 255:red, 0; green, 0; blue, 0 }  ,fill opacity=1 ][line width=0.75]      (0, 0) circle [x radius= 1.34, y radius= 1.34]   ;
\draw  [dash pattern={on 0.75pt off 0.75pt}]  (438.77,144.79) -- (439.01,144.79) ;
\draw [shift={(439.01,144.79)}, rotate = 360] [color={rgb, 255:red, 0; green, 0; blue, 0 }  ][fill={rgb, 255:red, 0; green, 0; blue, 0 }  ][line width=0.75]      (0, 0) circle [x radius= 1.34, y radius= 1.34]   ;
\draw [shift={(438.77,144.79)}, rotate = 360] [color={rgb, 255:red, 0; green, 0; blue, 0 }  ][fill={rgb, 255:red, 0; green, 0; blue, 0 }  ][line width=0.75]      (0, 0) circle [x radius= 1.34, y radius= 1.34]   ;
\draw    (285.8,155) .. controls (298.41,153.85) and (296.78,143.22) .. (316.82,140.87) ;
\draw [shift={(319.8,140.6)}, rotate = 176.05] [fill={rgb, 255:red, 0; green, 0; blue, 0 }  ][line width=0.08]  [draw opacity=0] (6.25,-3) -- (0,0) -- (6.25,3) -- cycle    ;

\draw (204,65.4) node [anchor=north west][inner sep=0.75pt]    {$P$};
\draw (89.36,69) node [anchor=north west][inner sep=0.75pt]    {$F$};
\draw (183.4,194.8) node [anchor=north west][inner sep=0.75pt]  [font=\tiny]  {$m_{v}$};
\draw (184.07,172.42) node [anchor=north west][inner sep=0.75pt]  [font=\tiny]  {$x_{v}$};
\draw (215.56,203.76) node [anchor=north west][inner sep=0.75pt]  [font=\tiny]  {$y_{v}$};
\draw (244.65,192.42) node [anchor=north west][inner sep=0.75pt]  [font=\scriptsize,color={rgb, 255:red, 128; green, 128; blue, 128 }  ,opacity=1 ]  {$U_{v}$};
\draw (87.84,208.96) node [anchor=north west][inner sep=0.75pt]    {$M$};
\draw (164.9,149.53) node [anchor=north west][inner sep=0.75pt]  [font=\tiny]  {$a_{v}^{1}$};
\draw (222.91,147.85) node [anchor=north west][inner sep=0.75pt]  [font=\tiny]  {$a_{v}^{2}$};
\draw (225.9,233.84) node [anchor=north west][inner sep=0.75pt]  [font=\tiny]  {$a_{v}^{3}$};
\draw (164.24,232.36) node [anchor=north west][inner sep=0.75pt]  [font=\tiny]  {$a_{v}^{4}$};
\draw (420.1,139.32) node [anchor=north west][inner sep=0.75pt]  [font=\tiny]  {$m_{v}$};
\draw (415.94,117.06) node [anchor=north west][inner sep=0.75pt]  [font=\tiny]  {$x_{v}$};
\draw (454.16,160.7) node [anchor=north west][inner sep=0.75pt]  [font=\tiny]  {$y_{v}$};
\draw (299.25,176.5) node [anchor=north west][inner sep=0.75pt]    {$M'$};

\end{tikzpicture}

     	\caption{The construction of $M' \subset \R^2$. In this example, $F = K_4$ and $P$ is constructed from the shown non-planar drawing of $F$.}
     	\label{fig:minor-in-plane} 
     \end{figure} 
     
     Our goal now is to push $M'$ through the quasi-isometry $\psi : \R^2 \to Y$ and use this to construct a fat copy of $F$ in $X$. 
     
     For each $e \in EP$ let $C_e \subset P'_e$ be a 1-coarse cover of $P'_e$ which contains the endpoints of $P_e'$. For a fixed $e$, enumerate $C_e = \{b_1, \ldots, b_l\}$ in the order that they appear. By joining $\psi(b_i)$ with a geodesic in $Y$ to $\psi(b_{i+1})$ for each $i$, we form a path $A_e$ in $Y$ connecting the $\psi$-images of $P_e'$, which is contained in the $2\lambda$-neighbourhood of $\psi(C_e)$, with respect to the intrinsic metric $\dist_Y$ of $Y$. We similarly repeat this construction with each $Q_v$ where $v \in \widehat VP$, and obtain a similar path $B_v$. 
     
     Given $v \in \widehat VP$, let us write $a_v = p_v(0)$, $b_v = p_v(s)$, $a'_v = \psi(x_v)$ $b'_v = \psi(y_v)$. Note that 
     $$
     \dist_Y(a_v, a'_v) \leq 2\lambda, \ \ \dist_Y(b_v, b'_v) \leq 2\lambda.
     $$
     Thus, we extend any path $A_{uv}$ $u \in VP$ terminating at $a_v'$ (resp. $b_v'$) with a geodesic of length at most $2\lambda$ so that it terminates at $a_v$ (resp. $b_v$). 
     If $v \in VP\setminus \widehat VP$ is not marked, then let $U_v'$ denote the $2\lambda$-neighbourhood of $\psi(U_v)$ in $Y$, so $U_v'$ is connected and contains the endpoints of all the paths $A_{e}$ where $e$ abuts $v$ in $P$. 
     
     We now consider the subset of $X$ given by 
     $$
     M'' = \bigcup_{v \in VP \setminus \widehat VP} U_v' \cup \bigcup_{e\in EP} A_e \cup \bigcup_{v \in \widehat VP} B_v  \cup \bigcup_{v \in \widehat VP} p_v.  
     $$
     
     By comparing $M''$ with the non-planar drawing of $F$ we started with, it is now straightforward to verify that $M''$ decomposes as a $K$-fat $F$-minor. We need only verify a few inequalities. For example, consider $v,u \in VF$. These correspond to unmarked vertices in $VP \setminus \widehat VP$, and the corresponding branch sets are thus $U_v', U_u' \subset M''$.
     We may lower bound the infimal distance $\dist_X(U_v', U_u')$ via:
     \begin{align*}
         \dist_X(U_v', U_u') &\geq \xi_-(\dist_Y(U_v', U_u')) \\
         &\geq  \xi_-(  \dist_Y(\psi(U_v), \psi(U_u))  -4\lambda)  \\
         &\geq  \xi_-(  \tfrac 1 \lambda \dist_{\R^2}(U_v, U_u) - \lambda -4\lambda)\\ 
         &\geq  \xi_-(  \tfrac 1 \lambda K' - \lambda -4\lambda) \\
         &=  \xi_-(  \tfrac 1 \lambda (10r + 4\lambda (10\lambda + \xi_-^{-1}(C+4R))) - \lambda -4\lambda) \\
         &>  \xi_-(  \xi_-^{-1}(C+4R)) \\
         &=  C+4R \\
         &> K.
     \end{align*}
    Thus, our branch sets are all sufficiently spaced-out. 
    We should also check that our bridges stay far away from everything they are supposed to. For example, let $v \in \widehat VP$ be a marked vertex. One must verify that $B_v$ and $p_v$ are sufficiently far apart. 

    Recall that the bridge $p_v$ has the form of two geodesics which are normal to $Y$, based at $a_v$ and $b_v$, connected at the top by a path which lies at distance at least $C$ from $Y$ everywhere. 
    We thus may assume without loss of generality that $\dist_X(b_v, B_v)\geq \dist_X(a_v, B_v)$, as if a vertex in $Y$ is close to the bridge $B_v$ then its closest point is either $a_v$, $b_v$, or a point on the `top' of the bridge. We then bound $\dist_X(B_v, p_v)$ as follows:
    \begin{align*}
        \dist_X(a_v, B_v) &\geq \xi_-(\dist_Y(a_v, B_v)) \\
        & \geq \xi_-(\dist_Y(\psi(x_h), \psi(Q_v)) - 4\lambda) \\
        & \geq \xi_-(\tfrac 1 \lambda \dist_{\R^2}(x_h), Q_v) - \lambda - 4\lambda) \\
        & \geq \xi_-(\tfrac 1 \lambda (\tfrac 1 \lambda \tfrac D 2 - \lambda) - \lambda - 4\lambda) \\
        & = \xi_-( \tfrac {D} {2\lambda^2} - 1 - 5\lambda) \\
        & > \xi_-(\xi_-^{-1}(C)) \\
        &> K.
    \end{align*}
    This implies that $\dist_X(p_v, B_v) > K$. 
    
    There are, of course, several other inequalities to check here, all of which follow similarly to the above. For example, one can check that the bridges $p_v$ are pairwise far apart from each other, due to the dependence of $K'$ on $R$. One should also check that the $A_e$ are pairwise far apart from each other, and so on. We leave these as an exercise to the keen reader.
    
    See Figure~\ref{fig:surface-final} for a cartoon of this completed construction. 
\end{proof}

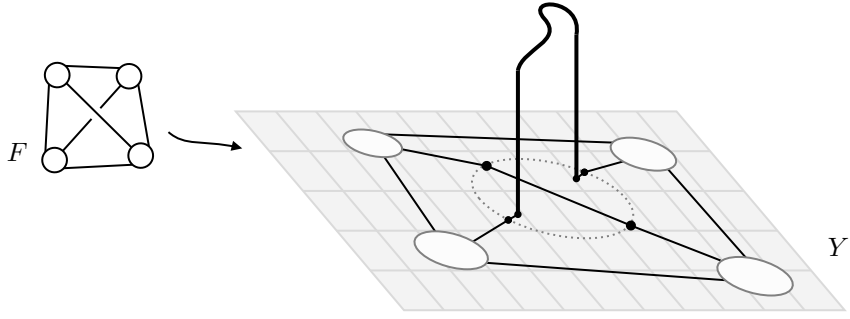
\begin{figure}
	\centering

\tikzset{every picture/.style={line width=0.75pt}} 

\begin{tikzpicture}[x=0.75pt,y=0.75pt,yscale=-1,xscale=1]

\draw  [draw opacity=0][fill={rgb, 255:red, 243; green, 243; blue, 243 }  ,fill opacity=1 ] (188.56,103.2) -- (408.56,103.2) -- (492.53,203.2) -- (272.53,203.2) -- cycle ; \draw  [color={rgb, 255:red, 218; green, 218; blue, 218 }  ,draw opacity=1 ] (208.56,103.2) -- (292.53,203.2)(228.56,103.2) -- (312.53,203.2)(248.56,103.2) -- (332.53,203.2)(268.56,103.2) -- (352.53,203.2)(288.56,103.2) -- (372.53,203.2)(308.56,103.2) -- (392.53,203.2)(328.56,103.2) -- (412.53,203.2)(348.56,103.2) -- (432.53,203.2)(368.56,103.2) -- (452.53,203.2)(388.56,103.2) -- (472.53,203.2) ; \draw  [color={rgb, 255:red, 218; green, 218; blue, 218 }  ,draw opacity=1 ] (205.35,123.2) -- (425.35,123.2)(222.15,143.2) -- (442.15,143.2)(238.94,163.2) -- (458.94,163.2)(255.73,183.2) -- (475.73,183.2) ; \draw  [color={rgb, 255:red, 218; green, 218; blue, 218 }  ,draw opacity=1 ] (188.56,103.2) -- (408.56,103.2) -- (492.53,203.2) -- (272.53,203.2) -- cycle ;
\draw    (102.72,131.77) -- (137.01,130.02) ;
\draw   (94.02,132.27) .. controls (91.48,130) and (91.25,126.1) .. (93.52,123.56) .. controls (95.78,121.02) and (99.68,120.79) .. (102.22,123.06) .. controls (104.77,125.33) and (104.99,129.22) .. (102.72,131.77) .. controls (100.46,134.31) and (96.56,134.53) .. (94.02,132.27) -- cycle ;
\draw   (95.32,89.51) .. controls (92.78,87.25) and (92.55,83.35) .. (94.82,80.81) .. controls (97.09,78.27) and (100.98,78.04) .. (103.53,80.31) .. controls (106.07,82.57) and (106.29,86.47) .. (104.03,89.01) .. controls (101.76,91.56) and (97.86,91.78) .. (95.32,89.51) -- cycle ;
\draw   (131.18,90.11) .. controls (128.63,87.84) and (128.41,83.94) .. (130.68,81.4) .. controls (132.94,78.86) and (136.84,78.64) .. (139.38,80.9) .. controls (141.92,83.17) and (142.15,87.06) .. (139.88,89.61) .. controls (137.62,92.15) and (133.72,92.37) .. (131.18,90.11) -- cycle ;
\draw   (137.01,130.02) .. controls (134.47,127.75) and (134.24,123.85) .. (136.51,121.31) .. controls (138.77,118.77) and (142.67,118.54) .. (145.21,120.81) .. controls (147.76,123.08) and (147.98,126.97) .. (145.71,129.52) .. controls (143.45,132.06) and (139.55,132.28) .. (137.01,130.02) -- cycle ;
\draw    (139.88,89.61) -- (145.21,120.81) ;
\draw    (93.52,123.56) -- (95.32,89.51) ;
\draw    (103.53,80.31) -- (130.68,81.4) ;
\draw    (104.03,89.01) -- (136.51,121.31) ;
\draw    (102.22,123.06) -- (116.56,106.98) ;
\draw    (120.95,101.59) -- (131.18,90.11) ;
\draw    (154.96,113.6) .. controls (163.36,121.01) and (171.72,117.41) .. (189,121.15) ;
\draw [shift={(191.8,121.8)}, rotate = 194.04] [fill={rgb, 255:red, 0; green, 0; blue, 0 }  ][line width=0.08]  [draw opacity=0] (5.36,-2.57) -- (0,0) -- (5.36,2.57) -- cycle    ;
\draw  [color={rgb, 255:red, 128; green, 128; blue, 128 }  ,draw opacity=1 ][dash pattern={on 0.75pt off 1.5pt}] (310.12,147.05) .. controls (300.88,136.04) and (309.77,127.11) .. (329.99,127.11) .. controls (350.2,127.11) and (374.08,136.04) .. (383.33,147.05) .. controls (392.57,158.07) and (383.68,167) .. (363.46,167) .. controls (343.24,167) and (319.36,158.07) .. (310.12,147.05) -- cycle ;
\draw    (262.73,126.2) -- (288.08,163.5) ;
\draw    (303.4,178.6) -- (439.8,188.6) ;
\draw    (401.8,129.4) -- (446.2,180.2) ;
\draw    (258.6,114.6) -- (380.2,119) ;
\draw    (271.31,122.2) -- (313.8,130.6) ;
\draw    (313.8,130.6) -- (385.8,160.6) ;
\draw [shift={(313.8,130.6)}, rotate = 22.62] [color={rgb, 255:red, 0; green, 0; blue, 0 }  ][fill={rgb, 255:red, 0; green, 0; blue, 0 }  ][line width=0.75]      (0, 0) circle [x radius= 2.01, y radius= 2.01]   ;
\draw    (385.8,160.6) -- (430.6,178.2) ;
\draw [shift={(385.8,160.6)}, rotate = 21.45] [color={rgb, 255:red, 0; green, 0; blue, 0 }  ][fill={rgb, 255:red, 0; green, 0; blue, 0 }  ][line width=0.75]      (0, 0) circle [x radius= 2.01, y radius= 2.01]   ;
\draw  [color={rgb, 255:red, 128; green, 128; blue, 128 }  ,draw opacity=1 ][fill={rgb, 255:red, 252; green, 252; blue, 252 }  ,fill opacity=1 ] (430.8,186.1) .. controls (426.3,180.74) and (430.26,176.4) .. (439.64,176.4) .. controls (449.01,176.4) and (460.25,180.74) .. (464.75,186.1) .. controls (469.25,191.46) and (465.29,195.8) .. (455.91,195.8) .. controls (446.54,195.8) and (435.3,191.46) .. (430.8,186.1) -- cycle ;
\draw  [color={rgb, 255:red, 128; green, 128; blue, 128 }  ,draw opacity=1 ][fill={rgb, 255:red, 252; green, 252; blue, 252 }  ,fill opacity=1 ] (243.43,119.25) .. controls (240.21,115.41) and (243.63,112.3) .. (251.07,112.3) .. controls (258.51,112.3) and (267.15,115.41) .. (270.37,119.25) .. controls (273.59,123.09) and (270.17,126.2) .. (262.73,126.2) .. controls (255.29,126.2) and (246.65,123.09) .. (243.43,119.25) -- cycle ;
\draw    (324.6,157.8) -- (303.8,170.6) ;
\draw [shift={(324.6,157.8)}, rotate = 148.39] [color={rgb, 255:red, 0; green, 0; blue, 0 }  ][fill={rgb, 255:red, 0; green, 0; blue, 0 }  ][line width=0.75]      (0, 0) circle [x radius= 1.34, y radius= 1.34]   ;
\draw    (362.6,133.8) -- (388.2,126.6) ;
\draw [shift={(362.6,133.8)}, rotate = 344.29] [color={rgb, 255:red, 0; green, 0; blue, 0 }  ][fill={rgb, 255:red, 0; green, 0; blue, 0 }  ][line width=0.75]      (0, 0) circle [x radius= 1.34, y radius= 1.34]   ;
\draw    (329.4,155) -- (324.6,157.8) ;
\draw [shift={(324.6,157.8)}, rotate = 149.74] [color={rgb, 255:red, 0; green, 0; blue, 0 }  ][fill={rgb, 255:red, 0; green, 0; blue, 0 }  ][line width=0.75]      (0, 0) circle [x radius= 1.34, y radius= 1.34]   ;
\draw [shift={(329.4,155)}, rotate = 149.74] [color={rgb, 255:red, 0; green, 0; blue, 0 }  ][fill={rgb, 255:red, 0; green, 0; blue, 0 }  ][line width=0.75]      (0, 0) circle [x radius= 1.34, y radius= 1.34]   ;
\draw    (362.6,133.8) -- (358.6,137) ;
\draw [shift={(358.6,137)}, rotate = 141.34] [color={rgb, 255:red, 0; green, 0; blue, 0 }  ][fill={rgb, 255:red, 0; green, 0; blue, 0 }  ][line width=0.75]      (0, 0) circle [x radius= 1.34, y radius= 1.34]   ;
\draw [shift={(362.6,133.8)}, rotate = 141.34] [color={rgb, 255:red, 0; green, 0; blue, 0 }  ][fill={rgb, 255:red, 0; green, 0; blue, 0 }  ][line width=0.75]      (0, 0) circle [x radius= 1.34, y radius= 1.34]   ;
\draw [line width=1.5]    (329.4,82.6) -- (329.4,155) ;
\draw [line width=1.5]    (358.6,64.6) -- (358.6,137) ;
\draw [line width=1.5]    (329.4,82.6) .. controls (331.4,71.8) and (352.6,67.8) .. (342.6,56.6) .. controls (332.6,45.4) and (361.8,46.6) .. (358.6,64.6) ;
\draw  [color={rgb, 255:red, 128; green, 128; blue, 128 }  ,draw opacity=1 ][fill={rgb, 255:red, 252; green, 252; blue, 252 }  ,fill opacity=1 ] (377.23,124.7) .. controls (373.38,120.12) and (376.89,116.4) .. (385.07,116.4) .. controls (393.24,116.4) and (402.99,120.12) .. (406.84,124.7) .. controls (410.68,129.28) and (407.17,133) .. (398.99,133) .. controls (390.82,133) and (381.07,129.28) .. (377.23,124.7) -- cycle ;
\draw  [color={rgb, 255:red, 128; green, 128; blue, 128 }  ,draw opacity=1 ][fill={rgb, 255:red, 252; green, 252; blue, 252 }  ,fill opacity=1 ] (279.61,173.05) .. controls (275.18,167.78) and (278.98,163.5) .. (288.08,163.5) .. controls (297.19,163.5) and (308.16,167.78) .. (312.59,173.05) .. controls (317.01,178.32) and (313.22,182.6) .. (304.11,182.6) .. controls (295,182.6) and (284.03,178.32) .. (279.61,173.05) -- cycle ;

\draw (482.57,165.2) node [anchor=north west][inner sep=0.75pt]    {$Y$};
\draw (72.76,117.4) node [anchor=north west][inner sep=0.75pt]    {$F$};

\end{tikzpicture}

	\caption{Cartoon of the fat $F$-minor $M''$. Here $F = K_4$ and the construction proceeds using the shown non-planar drawing.}\label{fig:surface-final}
\end{figure}

\section{Finitely presented groups}\label{sec:fp}

In this section we construct fat minors in finitely presented groups. Throughout this section, $G$ will denote a one-ended finitely presented group which is not a surface group. Let $X$ denote some choice of Cayley graph of $G$. 

\subsection{Idea of proof} 

Given a one-ended finitely presented group $G$, we will roughly proceed as follows. By applying known structure results for finitely presented groups together with Theorems~\ref{thm:descending-chain} and \ref{thm:surface-subgroup}, we can restrict to the case where $G$ does not split over a two-ended subgroup. Under this assumption, we proceed roughly as follows:

\begin{enumerate}
	\item By a theorem of Papasoglu \cite{papasoglu2005quasi}, we know that no neighbourhood of a bi-infinite geodesic can separate our Cayley graph into deep pieces.
	\item By the Arzela--Ascoli theorem, we see that for every $K > 0$ there exists some $R > 0$ such that the $K$-neighbourhood of any long geodesic segment cannot `nicely' separate the $R$-ball about its midpoint. 
	\item We use this to build a fat $K_m$, where the vertices are `long geodesics'. The previous observation allows us to redirect the edge paths so that they avoid any branch sets they aren't meant to meet.
\end{enumerate}
See Figure~\ref{fig:main-construction} below for a cartoon of the final construction. 

We need to be a bit careful about what we mean for a geodesic to `nicely' separate a ball. This is the topic of the next subsection. 

\subsection{Local coarse separation}

We begin with the following definition, which is useful for shorthand. 

\begin{definition}[Coarsely perpendicular]
    Let $A, B \subset X$ be  subgraphs, $x_0 \in A$, and $L \geq 1$. We say that $B$ is \textit{$L$-coarsely perpendicular to $A$ at $x_0$} if 
    $$
    \dist(b,x_0) \leq L \dist(b, A) + L,
    $$
    for all $b \in B$. 
\end{definition}

Intuitively, $B$ is coarsely perpendicular to $A$ at $x_0$ if $x_0$ is approximately the nearest point in $A$ to every $b \in B$, up to some linear error.
We now have the following key definition. 

\begin{definition}[Local coarse separation]
    Let $R >  r > K > 0$ and $L \geq 0$. Let $\rho \subset X$ be a geodesic, and let $x_0 \in \rho$ be a vertex which lies at distance at least $5R$ from the endpoints of $\rho$. 
    
    Then $\rho$ is said to be \textit{$(R, r, L, K)$-locally coarsely separating at $x_0$} if the complement
    $$
    N_{R}(x_0) \setminus N_K(\rho)
    $$
    contains two distinct connected components $U_1$, $U_2$ such that for $i =1,2$ we have:
    \begin{enumerate}
        \item the subgraph $U_i$ contains a path $p_i$ connecting $S_r(x_0)$ to $S_R(x_0)$, and 

        \item  the path $p_i$ is $L$-coarsely perpendicular to $\rho$.  
    \end{enumerate}
\end{definition}

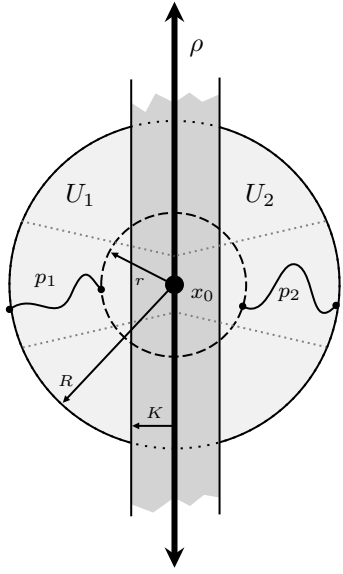
\begin{figure}[t]
    \centering

\tikzset{every picture/.style={line width=0.75pt}} 

\begin{tikzpicture}[x=0.75pt,y=0.75pt,yscale=-1,xscale=1]

\draw  [fill={rgb, 255:red, 241; green, 241; blue, 241 }  ,fill opacity=1 ] (239.57,148.4) .. controls (239.57,102.9) and (276.46,66.01) .. (321.96,66.01) .. controls (367.46,66.01) and (404.35,102.9) .. (404.35,148.4) .. controls (404.35,193.9) and (367.46,230.79) .. (321.96,230.79) .. controls (276.46,230.79) and (239.57,193.9) .. (239.57,148.4) -- cycle ;
\draw  [draw opacity=0][fill={rgb, 255:red, 211; green, 211; blue, 211 }  ,fill opacity=1 ] (329.41,51.7) -- (335.84,56.4) -- (344.31,52.64) -- (344.6,255.49) -- (336.71,254.08) -- (329.7,259.71) -- (321.82,255.49) -- (311.3,263) -- (300.2,258.77) -- (300.35,56.87) -- (307.5,61.56) -- (312.47,52.17) -- (321.82,56.87) -- cycle ;
\draw [line width=2.25]    (321.96,11) -- (321.96,285.8) ;
\draw [shift={(321.96,290.8)}, rotate = 270] [fill={rgb, 255:red, 0; green, 0; blue, 0 }  ][line width=0.08]  [draw opacity=0] (10.36,-4.98) -- (0,0) -- (10.36,4.98) -- (6.88,0) -- cycle    ;
\draw [shift={(321.96,148.4)}, rotate = 90] [color={rgb, 255:red, 0; green, 0; blue, 0 }  ][fill={rgb, 255:red, 0; green, 0; blue, 0 }  ][line width=2.25]      (0, 0) circle [x radius= 3.22, y radius= 3.22]   ;
\draw [shift={(321.96,6)}, rotate = 90] [fill={rgb, 255:red, 0; green, 0; blue, 0 }  ][line width=0.08]  [draw opacity=0] (10.36,-4.98) -- (0,0) -- (10.36,4.98) -- (6.88,0) -- cycle    ;
\draw    (300.35,45.13) -- (300.35,264.88) ;
\draw    (344.45,44.19) -- (344.45,263.94) ;
\draw  [dash pattern={on 3.75pt off 1.5pt}] (285.39,148.4) .. controls (285.39,128.42) and (301.59,112.23) .. (321.56,112.23) .. controls (341.54,112.23) and (357.73,128.42) .. (357.73,148.4) .. controls (357.73,168.38) and (341.54,184.57) .. (321.56,184.57) .. controls (301.59,184.57) and (285.39,168.38) .. (285.39,148.4) -- cycle ;
\draw [color={rgb, 255:red, 128; green, 128; blue, 128 }  ,draw opacity=1 ] [dash pattern={on 0.75pt off 1.5pt}]  (245.48,116.4) -- (321.91,133.97) ;
\draw [color={rgb, 255:red, 128; green, 128; blue, 128 }  ,draw opacity=1 ] [dash pattern={on 0.75pt off 1.5pt}]  (246.2,180) -- (320.83,162.63) ;
\draw [color={rgb, 255:red, 128; green, 128; blue, 128 }  ,draw opacity=1 ] [dash pattern={on 0.75pt off 1.5pt}]  (397.33,116.4) -- (321.91,133.97) ;
\draw [color={rgb, 255:red, 128; green, 128; blue, 128 }  ,draw opacity=1 ] [dash pattern={on 0.75pt off 1.5pt}]  (398.2,179.87) -- (322.77,162.3) ;
\draw    (239.6,160.8) .. controls (252.6,151.2) and (263.4,169.6) .. (270.6,152.8) .. controls (277.8,136) and (280.2,145.6) .. (285.4,150.8) ;
\draw [shift={(285.4,150.8)}, rotate = 45] [color={rgb, 255:red, 0; green, 0; blue, 0 }  ][fill={rgb, 255:red, 0; green, 0; blue, 0 }  ][line width=0.75]      (0, 0) circle [x radius= 1.34, y radius= 1.34]   ;
\draw [shift={(239.6,160.8)}, rotate = 323.56] [color={rgb, 255:red, 0; green, 0; blue, 0 }  ][fill={rgb, 255:red, 0; green, 0; blue, 0 }  ][line width=0.75]      (0, 0) circle [x radius= 1.34, y radius= 1.34]   ;
\draw    (356,159.2) .. controls (369.8,167.2) and (373.4,134.4) .. (383,138.4) .. controls (392.6,142.4) and (389.4,173.2) .. (402.6,158.8) ;
\draw [shift={(402.6,158.8)}, rotate = 312.51] [color={rgb, 255:red, 0; green, 0; blue, 0 }  ][fill={rgb, 255:red, 0; green, 0; blue, 0 }  ][line width=0.75]      (0, 0) circle [x radius= 1.34, y radius= 1.34]   ;
\draw [shift={(356,159.2)}, rotate = 30.1] [color={rgb, 255:red, 0; green, 0; blue, 0 }  ][fill={rgb, 255:red, 0; green, 0; blue, 0 }  ][line width=0.75]      (0, 0) circle [x radius= 1.34, y radius= 1.34]   ;
\draw  [dash pattern={on 0.84pt off 2.51pt}] (239.57,148.4) .. controls (239.57,102.9) and (276.46,66.01) .. (321.96,66.01) .. controls (367.46,66.01) and (404.35,102.9) .. (404.35,148.4) .. controls (404.35,193.9) and (367.46,230.79) .. (321.96,230.79) .. controls (276.46,230.79) and (239.57,193.9) .. (239.57,148.4) -- cycle ;
\draw    (303.8,219.4) -- (321.8,219.4) ;
\draw [shift={(300.8,219.4)}, rotate = 0] [fill={rgb, 255:red, 0; green, 0; blue, 0 }  ][line width=0.08]  [draw opacity=0] (3.57,-1.72) -- (0,0) -- (3.57,1.72) -- cycle    ;
\draw    (321.56,148.4) -- (268.25,205.61) ;
\draw [shift={(266.2,207.8)}, rotate = 312.98] [fill={rgb, 255:red, 0; green, 0; blue, 0 }  ][line width=0.08]  [draw opacity=0] (3.57,-1.72) -- (0,0) -- (3.57,1.72) -- cycle    ;
\draw    (292.87,133.58) -- (321.56,148.4) ;
\draw [shift={(290.2,132.2)}, rotate = 27.32] [fill={rgb, 255:red, 0; green, 0; blue, 0 }  ][line width=0.08]  [draw opacity=0] (3.57,-1.72) -- (0,0) -- (3.57,1.72) -- cycle    ;

\draw (328,23.6) node [anchor=north west][inner sep=0.75pt]    {$\rho $};
\draw (328.56,148.8) node [anchor=north west][inner sep=0.75pt]  [font=\footnotesize]  {$x_{0}$};
\draw (266,96.2) node [anchor=north west][inner sep=0.75pt]    {$U_{1}$};
\draw (355.8,96) node [anchor=north west][inner sep=0.75pt]    {$U_{2}$};
\draw (250.8,141.6) node [anchor=north west][inner sep=0.75pt]  [font=\footnotesize]  {$p_{1}$};
\draw (372.4,148.4) node [anchor=north west][inner sep=0.75pt]  [font=\footnotesize]  {$p_{2}$};
\draw (306.4,208.4) node [anchor=north west][inner sep=0.75pt]  [font=\tiny]  {$K$};
\draw (262.4,192.2) node [anchor=north west][inner sep=0.75pt]  [font=\tiny]  {$R$};
\draw (300.6,143.6) node [anchor=north west][inner sep=0.75pt]  [font=\tiny]  {$r$};

\end{tikzpicture}

    \caption{Local coarse separation.}
    \label{fig:lcs}
\end{figure}

See Figure~\ref{fig:lcs} for a cartoon of this definition. 

We now prove some basic lemmas about local coarse separation. 
We begin with the following lemma which says that we can decrease the value of $R$ in the above definition without destroying the local coarse separation property. 

\begin{lemma}\label{lem:shrink-R}
    Let $R >  r > K > 0$ and $L \geq 0$. Fix $x_0 \in X$. Let $\rho$ be a geodesic segment which $(R,r, L, K)$-locally coarsely separates at $x_0$. Let $R'$ be such that $r < R' \leq R$. 
    Then $\rho$ also $(R',r, L, K)$-locally coarsely separates at $x_0$. 
\end{lemma}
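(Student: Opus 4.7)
The plan is to show that the witness structure for $(R,r,L,K)$-local coarse separation restricts cleanly to the smaller ball $N_{R'}(x_0)$. Concretely, I will cut both the witness paths $p_1, p_2$ and the witness components $U_1, U_2$ down to $N_{R'}(x_0)$, and then verify that the resulting objects still satisfy the three required conditions.

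More precisely, for each $i = 1,2$, parameterise $p_i : [0, \ell_i] \to X$ at unit speed with $p_i(0) \in S_r(x_0)$ and $p_i(\ell_i) \in S_R(x_0)$. Since the function $t \mapsto \dist(p_i(t), x_0)$ is continuous and takes the value $r$ at $0$ and $R$ at $\ell_i$, with $r < R' \leq R$, there is a first time $t_i \in [0, \ell_i]$ for which $\dist(p_i(t_i), x_0) = R'$. I would set $p_i' := p_i|_{[0, t_i]}$, which is a path from $S_r(x_0)$ to $S_{R'}(x_0)$ contained in $N_{R'}(x_0)$, and moreover contained in $U_i \subset N_R(x_0) \setminus N_K(\rho)$. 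I would then let $U_i'$ be the connected component of $N_{R'}(x_0) \setminus N_K(\rho)$ containing $p_i'$.

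Both required properties transfer with essentially no work. The $L$-coarse perpendicularity of $p_i'$ to $\rho$ is inherited pointwise from the same property of $p_i$, since $p_i' \subset p_i$. For the distinctness $U_1' \neq U_2'$, suppose for contradiction that they coincide. Then there would be a path inside $N_{R'}(x_0) \setminus N_K(\rho) \subset N_R(x_0) \setminus N_K(\rho)$ joining a point of $p_1 \subset U_1$ to a point of $p_2 \subset U_2$, contradicting the assumption that $U_1$ and $U_2$ are distinct connected components of the larger set.

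Honestly, I do not expect any serious obstacle: this is a monotonicity statement, and the only point that needs any care at all is the intermediate-value argument that guarantees $p_i'$ actually reaches $S_{R'}(x_0)$ (so that its endpoints lie on the correct spheres), which is why the hypothesis $r < R'$ appears.
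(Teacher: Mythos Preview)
Your proposal is correct and follows essentially the same approach as the paper: restrict the witness paths $p_i$ to subpaths landing on $S_{R'}(x_0)$, observe that subpaths inherit $L$-coarse perpendicularity, and note that the restricted components remain distinct because any connecting path in $N_{R'}(x_0)\setminus N_K(\rho)$ would also lie in $N_R(x_0)\setminus N_K(\rho)$. If anything, you are slightly more careful than the paper in spelling out the intermediate-value argument guaranteeing that $p_i'$ actually terminates on $S_{R'}(x_0)$ and stays inside $N_{R'}(x_0)$.
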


\begin{proof}
    Let $U_1, U_2 \subset N_R(x_0) \setminus N_K(\rho)$ be connected components, such that $U_i$ contains a path $p_i$ connecting $S_r(x_0)$ to $S_R(x_0)$ which is $L$-coarsely perpendicular to $\rho$ at $x_0$.  
    Now, it follows easily from the definition that a subpath of an $L$-coarsely perpendicular path is also $L$-coarsely perpendicular. Note that 
    $$U_1' := U_1 \cap N_{R'}(x_0), \ \ U_2' := U_2 \cap N_{R'}(x_0)$$ do not intersect a common connected component of $N_{R'}(x_0) \setminus N_K(\rho)$. Also, each $U_i'$ contains a subpath $p_i'$ of $p_i$ which connects $S_r(x_0)$ to $S_{R'}(x_0)$. As noted earlier, this subpath is $L$-coarsely perpendicular to $\rho$ at $x_0$. The lemma follows. 
\end{proof}

The following two lemmas are easy exercises in the triangle inequality, but helpful to note.

\begin{lemma}\label{lem:complement-agrees}
    Let $\rho \subset X$ be a geodesic, let $x_0 \in \rho$ be a vertex which lies at distance at least $5R$ from the endpoints of $\rho$, and let $K \geq 0$. 
    Let $\rho' \subset X$ be another geodesic such that $\rho \subset \rho'$. Then 
    $
    N_R(x_0) \setminus N_K(\rho) = N_R(x_0) \setminus N_K(\rho'). 
    $
\end{lemma}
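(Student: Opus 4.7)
The plan is to prove the two inclusions separately. The inclusion $N_R(x_0) \setminus N_K(\rho') \subseteq N_R(x_0) \setminus N_K(\rho)$ is immediate from $\rho \subseteq \rho'$, which gives $N_K(\rho) \subseteq N_K(\rho')$ and hence reverses upon taking complements within $N_R(x_0)$. All of the content is in the reverse inclusion.

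For this, I would first dispose of the trivial range $K \geq R$: since $x_0 \in \rho$, every point of $N_R(x_0)$ is automatically within distance $R \leq K$ of $\rho$, so both sides are empty. So assume $K < R$. Argue by contradiction: suppose $y \in N_R(x_0) \setminus N_K(\rho)$ but $y \in N_K(\rho')$, and pick a witness $z \in \rho'$ with $\dist_X(y,z) \leq K$. Since $y \notin N_K(\rho)$ we must have $z \in \rho' \setminus \rho$. Because $\rho \subseteq \rho'$ and both are geodesic segments, $\rho$ is a subsegment of $\rho'$, so any point of $\rho' \setminus \rho$ lies past one of the endpoints $a$ of $\rho$. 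Crucially, since $\rho'$ is itself a geodesic in $X$, the intrinsic distance along $\rho'$ agrees with $\dist_X$, which gives
$$
\dist_X(x_0, z) = \dist_X(x_0, a) + \dist_X(a, z) \geq 5R.
$$
Now the triangle inequality forces $5R \leq \dist_X(x_0, y) + \dist_X(y, z) \leq R + K$, so $K \geq 4R$, contradicting $K < R$.

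I don't expect a genuine obstacle here: the only nontrivial point is remembering to use the hypothesis that $\rho'$ is a geodesic (not merely a path containing $\rho$) in order to equate intrinsic distance along $\rho'$ with $\dist_X$. Without this, one could only say $\dist_X(x_0, z) \leq 5R + \dist_X(a,z)$, and the argument collapses.
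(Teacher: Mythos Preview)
Your argument is correct and is precisely the triangle-inequality computation the paper has in mind: the paper does not actually prove this lemma, stating only that it is an ``easy exercise in the triangle inequality''. Your observation that one must use the fact that $\rho'$ is a geodesic (so that $\dist_X(x_0,z) = \dist_X(x_0,a) + \dist_X(a,z) \geq 5R$) is the only point with any content, and you handle it correctly.
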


\begin{lemma}\label{lem:trim-grow}
    Let $R >  r > K > 0$ and $L \geq 0$. Fix $x_0 \in X$. Let $\rho$ be a geodesic segment such that $x_0 \in \rho$, and $x_0$ lies at distance at least $5R$ from the endpoints of $\rho$. Let $\rho'$ be another geodesic segment such that $\rho \subset \rho'$. 
     Then $\rho$ is $(R,r, L, K)$-locally coarsely separating at $x_0$ if and only if $\rho'$ is. 
\end{lemma}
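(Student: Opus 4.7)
The plan is to show that, under the hypotheses of the lemma, neither the set $N_R(x_0) \setminus N_K(\rho)$ nor the distance-to-geodesic function $\dist(\cdot, \rho)$, restricted to $N_R(x_0)$, is affected by replacing $\rho$ with $\rho'$. The equality of complements is exactly Lemma~\ref{lem:complement-agrees}, which means that the candidate components $U_1, U_2$ in the definition of local coarse separation are the same whether one uses $\rho$ or $\rho'$. Also, $x_0$ automatically lies at distance at least $5R$ from the endpoints of $\rho'$ since $\rho \subset \rho'$, so $\rho'$ is a legitimate candidate for the definition. Given these reductions, the same witness path $p_i$ works for $\rho$ if and only if it works for $\rho'$, provided the $L$-coarse perpendicularity condition transfers between the two geodesics.

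The only content is therefore to verify the identity $\dist(b, \rho) = \dist(b, \rho')$ for every $b \in N_R(x_0)$, which would make the $L$-coarse perpendicularity conditions coincide on paths lying in $N_R(x_0)$. One inequality is immediate from $\rho \subset \rho'$. For the reverse, I would pick a nearest point $z \in \rho'$ to $b$ and assume for contradiction that $z \in \rho' \setminus \rho$. Since $\rho'$ is a geodesic containing $\rho$, $z$ must lie past one of the endpoints $y_1, y_2$ of $\rho$ when travelling along $\rho'$, so the geodesic subsegment of $\rho'$ from $x_0$ through $y_i$ to $z$ forces $\dist(x_0, z) \geq 5R$. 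Combining this with $b \in N_R(x_0)$ via the triangle inequality yields $\dist(b, z) \geq 5R - R = 4R$, which contradicts $\dist(b, \rho') \leq \dist(b, x_0) \leq R$ (valid since $x_0 \in \rho'$).

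Thus $\dist(b, \rho) = \dist(b, \rho')$ for all $b \in N_R(x_0)$, which in particular makes $L$-coarse perpendicularity to $\rho$ at $x_0$ equivalent to $L$-coarse perpendicularity to $\rho'$ at $x_0$ for any path lying inside $N_R(x_0)$. Combined with the identification of the components $U_1, U_2$ provided by Lemma~\ref{lem:complement-agrees}, this gives the biconditional in both directions. No step here is expected to be an obstacle; this is essentially a bookkeeping lemma that cleanly factors through the (previously established) coincidence of complements together with a one-line observation about nearest points on a geodesic.
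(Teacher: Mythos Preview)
Your proposal is correct and is exactly the kind of triangle-inequality verification the paper has in mind; the paper itself does not give a proof but simply flags this lemma (together with Lemma~\ref{lem:complement-agrees}) as an easy exercise in the triangle inequality. Your argument---identifying the complements via Lemma~\ref{lem:complement-agrees} and then showing $\dist(b,\rho)=\dist(b,\rho')$ on $N_R(x_0)$ by ruling out nearest points in $\rho'\setminus\rho$---is precisely the intended unpacking.
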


The latter of the previous two lemmas essentially says that if $\rho$ locally coarsely separating then we may `extend' or `trim' $\rho$ while preserving this property. 

We now state the following standard definition.

\begin{definition}[(Global) coarse separation]
    Given $K \geq 0$, a subset $A \subset X$ is said to \textit{$K$-coarsely separate $X$ into deep components} if $X \setminus N_K(A)$ contains two distinct connected components $U_1$, $U_2$ and neither of the $U_i$ is contained in $N_{K'}(A)$ for any $K' \geq 0$.

\end{definition}

We may omit mention of the constant $K$ and simply say that the subset \textit{$A$ coarsely separates $X$}.

\begin{lemma}\label{lem:coarsely-sep-geodesic}
    Let $r > K > 0$ and $L > 0$. Suppose that for all $R > r$ we have that there is a geodesic segment $\rho$ which $(R,r, L, K)$-locally coarsely separates. 
    Then $X$ contains a bi-infinite geodesic $\gamma$ which coarsely separates $X$. 
\end{lemma}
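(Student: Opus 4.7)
The plan is to extract a bi-infinite geodesic $\gamma$ as an Arzela--Ascoli limit of the given segments $\rho_n$ and then transfer the local coarse-separation data along this limit to produce two deep components of $X \setminus N_K(\gamma)$.

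First, I would fix $R_n \to \infty$ with $R_n > r$, and apply the hypothesis to get for each $n$ a geodesic $\rho_n$ which is $(R_n, r, L, K)$-locally coarsely separating at some $x_n$, with witness paths $p_1^{(n)}, p_2^{(n)}$ starting at $y_i^{(n)} \in S_r(x_n)$ in distinct components $U_i^{(n)}$ of $N_{R_n}(x_n) \setminus N_K(\rho_n)$. Because $X$ is a Cayley graph, $G$ acts transitively on $VX$, so after translating we may assume $x_n = x$ is constant. Each of the two halves of $\rho_n$ meeting at $x$ has length at least $5R_n$, so the Arzela--Ascoli argument used to prove Lemma~\ref{lem:normal-ray-subgroup} (applied to both halves and combined by a diagonal subsequence) yields a subsequence along which $\rho_n \to \gamma$ uniformly on compacts, where $\gamma$ is a bi-infinite geodesic through $x$. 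Using finiteness of $S_r(x)$, I would further subsequence so that $y_i^{(n)} = y_i$ is constant for $i = 1,2$ (and necessarily $y_1 \neq y_2$, since $U_1^{(n)} \neq U_2^{(n)}$).

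Next, I would transfer the separation data to $\gamma$ scale by scale. For each fixed $R > r$, Lemma~\ref{lem:shrink-R} tells me that $\rho_n$ is $(R, r, L, K)$-locally coarsely separating at $x$ for every $n$ with $R_n \geq R$, and that the corresponding truncations of $p_i^{(n)}$ lie in distinct components $V_i^{(n,R)}$ of $N_R(x) \setminus N_K(\rho_n)$ containing $y_i$. For $n$ sufficiently large depending on $R$, local finiteness forces $\rho_n \cap B_{2R}(x) = \gamma \cap B_{2R}(x)$, and a triangle-inequality check using $x \in \rho_n \cap \gamma$ then gives $N_K(\rho_n) \cap B_R(x) = N_K(\gamma) \cap B_R(x)$ and $d(\,\cdot\,, \rho_n) = d(\,\cdot\,, \gamma)$ on $B_R(x)$. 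Hence $V_1^{(n,R)}, V_2^{(n,R)}$ are distinct components of $N_R(x) \setminus N_K(\gamma)$. Since any path in $X \setminus N_K(\gamma)$ from $y_1$ to $y_2$ would be finite, and so lie in some ball $B_R(x)$, I conclude that $y_1$ and $y_2$ lie in distinct components $W_1, W_2$ of $X \setminus N_K(\gamma)$.

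Finally, to see that $W_i$ is deep: the truncated witness path ends at some $z_i^{(n,R)} \in S_R(x) \cap V_i^{(n,R)}$, and $L$-coarse perpendicularity gives $R = d(z_i^{(n,R)}, x) \leq L\, d(z_i^{(n,R)}, \rho_n) + L$. Combined with the distance agreement on $B_R(x)$, this yields $d(z_i^{(n,R)}, \gamma) \geq (R - L)/L$, so $W_i \supseteq V_i^{(n,R)}$ contains points arbitrarily far from $\gamma$ as $R \to \infty$. The main obstacle throughout is the order of quantifiers: no single $n$ is large enough to transfer the local separation at every scale $R$ simultaneously, so $W_1, W_2$ must be assembled scale by scale rather than directly in the Arzela--Ascoli limit; once $n$ is chosen after $R$ has been fixed, the rest reduces to routine triangle-inequality bookkeeping.
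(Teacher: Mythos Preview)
Your argument is correct and follows the same overall arc as the paper's proof (translate to a fixed basepoint, take an Arzel\`a--Ascoli limit, then transfer the separation data), but it handles the bookkeeping more economically. The paper, after passing to the limit $\gamma$, trims the $\rho_k$ so that $\rho_1 \subset \rho_2 \subset \ldots \subset \gamma$ and then spends most of its effort arranging a \emph{nested} system of components $U_i^{(1)} \subset U_i^{(2)} \subset \ldots$; this requires an auxiliary construction $W_i^{(k,m)}$ and a further diagonal subsequence to force the components at each scale to line up, after which the deep components are obtained as $\bigcup_k U_i^{(k)}$. Your device of pinning down the fixed basepoints $y_1, y_2 \in S_r(x)$ at the outset replaces all of this: the components $W_i$ are simply declared to be the components of $X \setminus N_K(\gamma)$ containing $y_i$, and the separation and depth are then checked scale by scale against a suitably large $n$. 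This is a genuine simplification, at the cost of being slightly less explicit about how the local witnesses at different scales fit together inside $W_i$. Both approaches rely on the same lemmas (\ref{lem:shrink-R} and the triangle-inequality fact that $N_R(x) \setminus N_K(\rho_n) = N_R(x) \setminus N_K(\gamma)$ once $\rho_n$ and $\gamma$ agree on $B_{2R}(x)$), so the difference is purely organisational.
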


\begin{proof}
    Fix $x_0 \in X$. Since $X$ is a transitive graph, the hypotheses imply that there exists a sequence $(\rho_k)_{k \geq 1}$ of geodesic segments and a sequence of integers $(R_k)_{k\geq 1}$ with the following properties:
    \begin{enumerate}
        \item we have that $R_k \to \infty$ as $k \to \infty$, 
    
        \item for all $k \geq 1$, we have that $x_0 \in \rho_k$ and lies at distance at least $5R_k$ from the endpoints of $\rho_k$, and  

        \item the segment $\rho_k$ is $(R_k,r, L, K)$-locally  coarsely separating at $x_0$. 
    \end{enumerate}
    Contrary to our usual convention in this paper, we will parametrise each $\rho_k$ at unit speed on a \emph{symmetric} interval, so $\rho_k : [-\tfrac {\ell_k} 2, \tfrac {\ell_k} 2] \to X$ where $\ell_k = \length (\rho_k)$. We also assume that  $\rho_k(0) = x_0$ for all $k > 0$. 
    
    By the Arzela--Ascoli theorem, the sequence $(\rho_k)$ contains a subsequence $(\rho_{n_k})$ which converges uniformly on compact subsets to a bi-infinite geodesic $\gamma$. 
    In particular, for all $N \geq 1$ there exists $M \geq 1$ such that for all $k \geq M$ we have 
    $$
    \gamma|_{[-N,N]} = \rho_{n_k}|_{[-N,N]}.
    $$ 
    We will abuse notation and simply write $\rho_k$ and $R_k$ for $\rho_{n_k}$ and $R_{n_k}$. 
    
    By Lemmas~\ref{lem:shrink-R} and \ref{lem:trim-grow} we may again pass to a subsequence, decrease the values of $R_k$, and `trim' our geodesics so that we may assume without loss of generality that 
    $$
    \rho_{1} \subset \rho_{2} \subset \rho_3 \subset \ldots \subset  \bigcup_{k \geq 1} \rho_k = \gamma. 
    $$
    By Lemma~\ref{lem:complement-agrees}, we have that 
    $$
    N_{R_k}(x_0) \setminus N_K(\rho_k) = N_{R_k}(x_0) \setminus N_K(\gamma). 
    $$
    For each $k \geq 1$, let $U_1^{(k)}$, $U_2^{(k)}$ denote two distinct connected components of $N_{R_k}(x_0) \setminus N_K(\rho_k)$, and for each $i =1,2$ let $p_i^{(k)} \subset U_i^{(k)}$ be a path which connects $N_r(x_0)$ to $S_{R_k}(x_0)$, and diverges $L$-linearly from $\rho_k$. By Lemma~\ref{lem:trim-grow}, $p_i^{(k)}$ also $L$-linearly diverges from $\gamma$. 
    In particular, each $U_i^{(k)}$ contains vertices which are arbitrarily far from $\gamma$ as $k \to \infty$.

    We would now like to say that $U_i^{(k)} \subset U_i^{(m)}$ for all $k < m$, $i = 1,2$. As it stands, this is not currently true as there are potentially many choices for the $U_i^{(k)}$. In order to ensure this, we will need to modify our choices of the $U_i^{(k)}$. Given $m \geq k$, let $q_i^{(k,m)}$ be a subpath of $p_i^{(m)}$ contained in $N_{R_k}(x_0) \setminus N_K(\gamma)$ connecting $N_r(x_0)$ to $S_{R_k}(x_0)$, and let 
    $W_i^{(k,m)}$  be the connected component of $ N_{R_k}(x_0) \setminus N_K(\gamma)$ which contains $q_i^{(k,m)}$. 
    It is clear that the $q_i^{(k,m)}$ and $W_i^{(k,m)}$ also witness the fact that $\rho_k$ is $(R_k,r,L,K)$-locally coarsely separating at $x_0$, for any $m \geq k$. In particular, the previous claims about the $U_i^{(k)}$ also apply to the $W_i^{(k,m)}$. Note that 
    $$
    W_i^{(k,m)} \subset W_i^{(k',m)}
    $$
    for all $k \leq k' \leq m$, by construction. 
    
    Now, if we fix $k$ and vary $m > k$, then $W_i^{(k,m)}$ can only take a bounded number of possibilities, as every $W_i^{(k,m)}$ is a connected component of $N_{R_k}(x_0) \setminus N_K(\gamma)$ which intersects $N_r(x_0)$. In particular, there are only at most $|N_r(x_0)|$ many possibilities. Fix $k = 1$, $i =1$, and consider the sequence 
    $$
    W_1^{(1,2)}, W_1^{(1,3)},  W_1^{(1,4)}, \ldots. 
    $$
    By passing to a subsequence and relabelling our indices, we may assume that 
    $$
    W_1^{(1,2)} = W_1^{(1,3)}  = W_1^{(1,4)} =  \ldots. 
    $$
    Similarly, now considering $i = 2$ we may also assume that 
    $$
    W_2^{(1,2)} = W_2^{(1,3)}  = W_2^{(1,4)} =  \ldots
    $$
    by passing to another subsequence and again relabelling. We now define $V_i^{(1)} := W_i^{(1,2)}$ as above. Note that the $V_i^{(1)}$ still witness the fact that $\rho_1$ is $(R_1, r, L, K)$-locally coarsely separating.

    We now choose $V_i^{(k)}$ for $k > 1$. Here, the choice is basically made for us. Indeed, let $k > 1$ and $i \in \{1,2\}$ and consider the sequence 
    $
    W_i^{(k,k)}, W_i^{(k,k+1)},  W_i^{(k,k+2)}, \ldots. 
    $ 
    This sequence is now already constant, as by construction we have that each $W_i^{(k,m)}$ is a connected component of $N_{R_k}(x_0) \setminus N_K(\gamma)$. Moreover, we have for any $m \geq k$ that
    $$
    V_i^{(1)} = W_i^{(1,2)} = W_i^{(1,m)} \subset W_i^{(k,m)},
    $$
    for $i = 1,2$. Since $V_i^{(1)}$ is connected subgraph of $N_{R_k}(x_0) \setminus N_K(\gamma)$, this completely determines $W_i^{(k,m)}$. We thus define  $V_i^{(k)} = W_i^{(k,k)}$ inductively for all $k \in \N$, $i = 1,2$. 
    
    By replacing the $U_i^{(k)}$ with the newly chosen $V_i^{(k)}$, we have the new property that 
    $$
    U_i^{(1)} \subset U_i^{(2)} \subset U_i^{(3)} \subset \ldots \subset \bigcup_{n \geq 1} U_i^{(n)} =: U_i  
    $$
    for each $i = 1,2$. Now, we certainly have that $U_1$ and $U_2$ are disjoint and connected. Moreover, each $U_i$ contains points arbitrarily far away from $\gamma$. We claim that there is no path $p$ from $U_1$ to $U_2$ avoiding $N_K(\gamma)$. Indeed, if one existed then there would exist $R \geq 0$ such that  $p \subset N_R(x_0) \setminus N_K(\gamma)$. Choose $k \geq 1$ such that $R_k \geq R$. Then we have that $U_1^{(k)}$ and $U_2^{(k)}$ are in the same connected component of $N_R(x_0) \setminus N_K(\rho_k)$. This is a contradiction. It follows that $\gamma$ coarsely separates $X$. 
\end{proof}

We will need the following theorem of Papasoglu. In what follows, a \textit{quasi-line} in $X$ is the image of a uniformly proper map $\R \to X$. See \cite{papasoglu2005quasi} for a precise definition. Examples of uniformly proper maps include quasi-isometries and coarse embeddings, and this is sufficient for our purposes. 

\begin{theorem}[Papasoglu \cite{papasoglu2005quasi}]\label{thm:papasoglu}
    Suppose that there is some quasi-line $\gamma$ which coarsely separates $X$.  Then $G$ splits non-trivially over a two-ended subgroup. 
\end{theorem}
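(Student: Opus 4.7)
The strategy is to convert the coarsely separating quasi-line $\gamma$ into a non-trivial almost invariant subset of $G$ with two-ended coboundary, and then extract a splitting via the Dunwoody--Sageev--Bowditch machinery for almost invariant sets.

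First, I would identify $G$ with the vertex orbit of a basepoint $x_0 \in \gamma$ in $X$, and fix $K \geq 0$ so that $X \setminus N_K(\gamma)$ contains two distinct deep components $U_1, U_2$. Setting $A := U_1 \cap G$ produces a subset of $G$ with both $A$ and $G \setminus A$ unbounded. The coboundary $\delta A := \{g \in A : \dist_X(g, G \setminus A) \leq 1\}$ lies in a bounded neighbourhood of $\gamma$, so by the quasi-line hypothesis $\delta A$ is itself quasi-isometric to $\R$; in particular, it is two-ended. Next, I would verify that $A$ is \emph{almost invariant}, meaning that for each $g \in G$ the symmetric difference $gA \triangle A$ has finite coboundary. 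Since $G$ acts by isometries and the coboundary of $A$ is concentrated near $\gamma$, the coboundary of $gA \triangle A$ is concentrated near the Hausdorff-symmetric difference of $\gamma$ and $g\gamma$, which is finite after passing to a suitable neighbourhood.

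With this $A$ in hand, the heart of the proof applies the theory of splittings from almost invariant sets. The $G$-translates of $A$ yield a $G$-invariant collection of ``cuts'' of $X$. Using finite presentability of $G$, one realises these cuts as Dunwoody tracks in a finite presentation complex of $G$, and appeals to Dunwoody's accessibility machinery (or equivalently, Sageev's cube complex construction applied to a suitable family of walls) to organise them into a minimal nested system and extract a $G$-tree $T$. The edge stabilisers of $T$ coincide with the stabilisers of the coboundaries $\delta A$, each of which is a quasi-line, and stabilisers of quasi-lines in a locally finite Cayley graph are two-ended subgroups of $G$. Since both $U_1$ and $U_2$ are deep, no vertex of $T$ can be globally $G$-fixed, so the action is non-trivial, and Bass--Serre theory produces the desired splitting over a two-ended subgroup.

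The main obstacle is extracting a genuinely nested (or ``disjoint up to commensurability'') collection of translates of $A$ that can feed into Sageev's construction or Dunwoody's tracks -- this is where finite presentability enters essentially, via accessibility. A secondary technical difficulty is ensuring that the resulting edge stabilisers truly are two-ended subgroups rather than subgroups which merely stabilise a quasi-line coarsely; handling this requires careful bookkeeping with commensurators of the coboundary quasi-lines.
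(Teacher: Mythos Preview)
The paper does not prove this theorem; it is quoted as a black box from Papasoglu's paper \cite{papasoglu2005quasi} and used as input to the diversion lemma. So there is no ``paper's own proof'' to compare against, only Papasoglu's original argument.

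That said, your outline has a genuine gap. The quasi-line $\gamma$ is \emph{completely arbitrary}: it carries no equivariance whatsoever, its stabiliser may well be trivial, and there is no reason for $g\gamma$ to be Hausdorff-close to $\gamma$ for any nontrivial $g$. Consequently the claim that ``the coboundary of $gA \triangle A$ is concentrated near the Hausdorff-symmetric difference of $\gamma$ and $g\gamma$, which is finite'' is simply false in general: $\gamma$ and $g\gamma$ are two unrelated bi-infinite quasi-lines, and their symmetric difference is typically the whole of both. So $A$ is not almost invariant in any useful sense, and the Dunwoody--Sageev input you need is not available. For the same reason, the assertion that ``stabilisers of quasi-lines in a locally finite Cayley graph are two-ended subgroups'' is unfounded --- the stabiliser of $\gamma$ can be trivial.

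This is precisely the difficulty that makes Papasoglu's theorem hard: one must manufacture equivariance out of nothing. His actual argument does not take a single half-space and push it through Sageev. Instead he studies the entire $G$-orbit of separating quasi-lines, analyses how they cross one another, and runs a delicate minimisation to locate a two-ended subgroup $H$ with $e(G,H) \geq 2$; only then does the splitting theory apply. You correctly flag ``extracting a genuinely nested collection'' as the main obstacle, but invoking accessibility or tracks does not resolve it --- the nesting has to be \emph{produced} by the geometric argument, not assumed.
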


In all that follows, \textbf{we now additionally assume that $G$ does not split over a two-ended subgroup}. 
In particular, the above theorem tells us that no bi-infinite geodesic coarsely separates $X$ into multiple deep components. 
Combining Theorem~\ref{thm:papasoglu} with Lemma~\ref{lem:coarsely-sep-geodesic}, we get the following corollary. 

\begin{corollary}
    For all $r > K > 0$, $L > 0$, there exists $R > 0$ such that there is no geodesic segment in $X$ which is $(R, r, L , K)$-locally coarsely separating. 
\end{corollary}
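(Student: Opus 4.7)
The plan is to prove this by contradiction, directly combining Lemma~\ref{lem:coarsely-sep-geodesic} with Papasoglu's theorem (Theorem~\ref{thm:papasoglu}) and the standing assumption that $G$ does not split over a two-ended subgroup. There is essentially no new geometric content to add beyond what has been set up, so this should be a very short deduction.

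Concretely, I would fix $r > K > 0$ and $L > 0$ and suppose for contradiction that for every $R > r$ there exists a geodesic segment $\rho_R \subset X$ which is $(R, r, L, K)$-locally coarsely separating (say at some vertex $x_R \in \rho_R$). This is precisely the hypothesis of Lemma~\ref{lem:coarsely-sep-geodesic}, so we obtain a bi-infinite geodesic $\gamma \subset X$ which coarsely separates $X$ into deep components.

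Next, I would observe that any bi-infinite geodesic is automatically a quasi-line in the sense of \cite{papasoglu2005quasi}: parametrising $\gamma$ at unit speed gives an isometric embedding $\R \to X$, which is in particular uniformly proper. Hence Theorem~\ref{thm:papasoglu} applies and tells us that $G$ splits non-trivially over a two-ended subgroup. This contradicts the standing assumption stated just before the corollary that $G$ does not split over any two-ended subgroup, completing the proof.

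The main (and only) potential obstacle is simply verifying that a bi-infinite geodesic qualifies as a quasi-line under Papasoglu's definition, but this is immediate from the fact that such a geodesic is an isometric embedding of $\R$. Everything else is a direct syllogism between the two previously established results.
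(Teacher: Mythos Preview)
Your proposal is correct and matches the paper's approach exactly: the corollary is stated immediately after the sentence ``Combining Theorem~\ref{thm:papasoglu} with Lemma~\ref{lem:coarsely-sep-geodesic}, we get the following corollary,'' and the paper has already noted that a bi-infinite geodesic is a quasi-line. There is nothing to add.
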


By simply rephrasing this corollary in more detail, we arrive at what we call the `diversion lemma'. 

\begin{lemma}[Diversion lemma]\label{lem:diversion}
    Given $K, L > 0$ there exists $R = R(L , K) > K$ such that the following holds: 
    
    Let $\rho \subset X$ be a geodesic, and fix $x_0 \in \rho$ which is at distance at least $5R$ from the endpoints of $\rho$. Let $q_1, q_2 \subset N_R(x_0)$ be  paths which are $L$-coarsely perpendicular to $\rho$, originating in $S_R(x_0)$ and terminating in $N_K(\rho)$. Then there exists a path from a vertex in $q_1$ to a vertex in $q_2$ contained entirely in $N_R(x_0) \setminus N_K(\rho)$. 
\end{lemma}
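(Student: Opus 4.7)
The plan is to treat the diversion lemma as a contrapositive repackaging of the corollary that immediately precedes it, so the strategy is mostly bookkeeping. First I would fix the given $K, L > 0$ and choose an auxiliary intermediate radius $r := \lceil LK + L \rceil + 2$, chosen precisely so that $r > K$ and $r > LK + L + 1$. Applying the corollary with these values of $r, L, K$ produces some $R = R(L, K) > r$ such that no geodesic segment in $X$ is $(R, r, L, K)$-locally coarsely separating at any interior vertex. This is the $R$ I claim witnesses the diversion lemma.

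To verify the conclusion I would proceed by contradiction. Suppose $\rho, x_0, q_1, q_2$ satisfy the hypotheses of the lemma with this $R$, yet no path in $N_R(x_0) \setminus N_K(\rho)$ connects a vertex of $q_1$ to a vertex of $q_2$. From this data I plan to manufacture explicit witnesses to local coarse separation of $\rho$ at $x_0$, contradicting the choice of $R$.

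The construction: parameterize each $q_i$ starting from its endpoint $a_i \in S_R(x_0)$, and let $q_i'$ be the maximal initial subpath of $q_i$ that avoids $N_K(\rho)$. By definition $q_i' \subset N_R(x_0) \setminus N_K(\rho)$. Let $b_i$ be the terminal vertex of $q_i$; by hypothesis $b_i \in N_K(\rho)$, and $L$-coarse perpendicularity gives
$$\dist(b_i, x_0) \leq L\,\dist(b_i, \rho) + L \leq LK + L.$$
Hence the endpoint of $q_i'$ lies within one edge of $b_i$, and in particular strictly inside $N_r(x_0)$. Since $q_i'$ starts on $S_R(x_0)$ and ends inside $S_r(x_0)$, it contains a subpath $p_i$ connecting $S_r(x_0)$ to $S_R(x_0)$ which inherits $L$-coarse perpendicularity to $\rho$. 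Let $U_i$ be the connected component of $N_R(x_0) \setminus N_K(\rho)$ containing $p_i$. The failure of the lemma forces $U_1 \neq U_2$, and together with the $p_i$ these provide the forbidden witnesses to $(R, r, L, K)$-local coarse separation of $\rho$ at $x_0$.

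There is no serious obstacle here: all the geometric content (ruling out locally coarsely separating geodesics) has already been absorbed into Lemma~\ref{lem:coarsely-sep-geodesic} and Theorem~\ref{thm:papasoglu}. The only delicate point is the choice of $r$, which must be large enough that the endpoint of the truncated path $q_i'$ is guaranteed to fall inside $S_r(x_0)$; this is exactly what $r > LK + L + 1$ ensures.
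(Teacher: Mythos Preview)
Your approach is exactly what the paper intends: it states the diversion lemma as ``simply rephrasing this corollary in more detail'' without further proof, and your contrapositive argument---choosing $r$ from $K,L$, invoking the corollary to get $R$, then extracting locally coarsely separating data from a putative counterexample---is precisely that rephrasing spelled out.

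One small slip to clean up: you write that the endpoint of $q_i'$ lies within one edge of $b_i$, the \emph{terminal} vertex of $q_i$. That is false in general, since $q_i$ may enter $N_K(\rho)$ long before reaching $b_i$. What you actually need is that the vertex of $q_i$ immediately following the endpoint of $q_i'$ lies in $N_K(\rho)$ (by maximality of $q_i'$), and $L$-coarse perpendicularity applied to \emph{that} vertex gives the same bound $LK+L$, so the endpoint of $q_i'$ still lands in $N_r(x_0)$. The conclusion survives; only the sentence identifying which vertex you apply perpendicularity to needs correcting.
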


\begin{remark}
	Note that a similar statement does not hold for arbitrary finitely generated groups. Indeed, the lamplighter group contains a separating quasi-line but does not split over a two-ended subgroup \cite{papasoglu2012splittings}. 
\end{remark}

\subsection{Constructing minors in groups which don't split}

We now begin constructing fat minors in $X$. 
First, note the following standard result about vertex transitive graphs.

\begin{proposition}
    Let $X$ be a infinite, locally finite, vertex-transitive graph. Then $X$ contains a bi-infinite geodesic. 
\end{proposition}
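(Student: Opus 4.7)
The plan is to combine vertex-transitivity with a standard Arzelà--Ascoli compactness argument, essentially the same technique used in the proof of Lemma~\ref{lem:normal-ray-subgroup}.

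First I would fix a base vertex $x_0 \in VX$. Since $X$ is infinite, connected and locally finite, every ball $N_R(x_0)$ is finite, and hence $X$ is unbounded. Therefore, for every $n \geq 1$ there exist vertices at distance at least $2n$ from each other in $X$, and so there is a geodesic segment $\gamma_n : [-n, n] \to X$ of length $2n$, parameterised at unit speed. Let $m_n := \gamma_n(0)$ denote its midpoint.

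Next I would use vertex-transitivity. For each $n \geq 1$, choose an automorphism $\varphi_n \in \Aut(X)$ with $\varphi_n(m_n) = x_0$, and replace $\gamma_n$ by $\varphi_n \circ \gamma_n$. The result is a sequence of geodesic segments, still denoted $\gamma_n : [-n,n] \to X$, all of which satisfy $\gamma_n(0) = x_0$. Since $X$ is locally finite, for every $N \geq 1$ the set of paths $[-N, N] \to X$ based at $x_0$ is finite, so by a standard diagonal extraction (the Arzelà--Ascoli theorem) we can pass to a subsequence which converges uniformly on compact subsets to a map $\gamma : \R \to X$. Concretely, for every $N \geq 1$ there exists $M \geq N$ such that for all $k \geq M$ we have $\gamma|_{[-N,N]} = \gamma_k|_{[-N,N]}$.

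Finally, I would observe that $\gamma$ is a bi-infinite geodesic. Indeed, given $s, t \in \R$, choose $N > \max(|s|, |t|)$ and $k \geq M$ as above. Then $\gamma|_{[-N,N]}$ agrees with $\gamma_k|_{[-N,N]}$, which is an isometric embedding of $[-N,N]$ into $X$, so $\dist_X(\gamma(s), \gamma(t)) = |s - t|$. This shows $\gamma : \R \to X$ is a bi-infinite geodesic, as required. There is no real obstacle here; the argument is wholly standard and the only two ingredients needed are local finiteness (to apply Arzelà--Ascoli) and vertex-transitivity (to recentre the midpoints at a common base vertex).
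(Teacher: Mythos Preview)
Your argument is correct and follows exactly the approach indicated in the paper: use vertex-transitivity to recentre the midpoints of arbitrarily long geodesic segments at a fixed vertex, then apply the Arzel\`a--Ascoli theorem to extract a bi-infinite geodesic limit. The paper's proof is terser but identical in substance.
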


\begin{proof}
	This follows easily from the Arzela--Ascoli theorem, together the fact that $X$ contains arbitrarily long geodesic segments whose midpoints all coincide thanks to vertex-transitivity. 
\end{proof}

For the remainder of this section, let $\gamma \subset X$ denote some fixed bi-infinite geodesic.

\begin{lemma}[Constant height paths]\label{lem:constant-height-paths}
There exists $\varepsilon_0 > 0$ depending only on $X$ such that the following holds:   

Let $r \geq 0$. Let $a, b \in X$ be such that 
$
\dist(a, \gamma) = \dist(b, \gamma) = r 
$. 
Assume further that there exists a path connecting $a$ to $b$ disjoint from $N_{r - 1}(\gamma)$. Then there exists a path $p$ from $a$ to $b$ contained in $A_{r\pm\varepsilon_0}(\gamma)$.
\end{lemma}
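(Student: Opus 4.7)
The plan is to exploit the finite presentability of $G$ in an essential way. Fix a finite presentation $\langle S \mid R \rangle$ of $G$ with maximal relator length $N := \max\{|r| : r \in R\}$, and let $\tilde X$ denote the associated Cayley $2$-complex, which is simply connected. I would aim to prove the lemma with $\varepsilon_0$ a constant of order $N$, say $\varepsilon_0 = 2N + 1$. Write $f(\cdot) = \dist(\cdot, \gamma)$; this is $1$-Lipschitz on $X$, so after extending linearly on each $2$-cell of $\tilde X$ we obtain a continuous function $\tilde f$ whose values on any single $2$-cell span an interval of length at most $N$.

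First I would build a reference ``low'' path $p_0$ from $a$ to $b$ by descending along a fixed geodesic from $a$ to its closest point $\pi(a) \in \gamma$, traversing $\gamma$ from $\pi(a)$ to $\pi(b)$, and ascending to $b$. By construction $f \leq r$ throughout $p_0$, while $f \geq r$ throughout $q$ by hypothesis. The loop $\ell := q \cdot \overline{p_0}$ is therefore null-homotopic in $\tilde X$, so bounds a van Kampen diagram $D$: a planar $2$-complex homeomorphic to a disk with $\partial D = \ell$, built from relator $2$-cells.

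The heart of the argument is a topological analysis of the collar $C := \tilde f^{-1}([r - N, \, r + N + 1])$ inside $D$. Both $a$ and $b$ lie in $C \cap \partial D$, and I would show they lie in the same path-component of $C$. The complement $D \setminus C$ decomposes into ``above'' regions where $\tilde f > r + N + 1$ (bounded by arcs and/or circles of the level set at height $r + N + 1$) and ``below'' regions where $\tilde f < r - N$. Because $p_0 \subset \partial D$ has $f \leq r$ and $q \subset \partial D$ has $f \geq r$, each above-region meeting $\partial D$ does so along a subarc of $q$, and each below-region meeting $\partial D$ does so along a subarc of $p_0$. One then constructs a continuous path in $C$ by travelling along $q$ and, at each point where $q$ would exit into an above-region, diverting along the boundary level-set arc at height $r + N + 1$ to the point where $q$ re-enters $C$. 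Converting this continuous path in $D$ to an edge path in the $1$-skeleton $X$, by tracing through the $2$-cells it meets, incurs at most an additional $N$ in height, yielding the required path in $A_{r\pm\varepsilon_0}(\gamma)$.

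The main obstacle will be rigorously establishing the path-connectedness of $C$, and in particular ruling out the possibility that an interior peak of $\tilde f$ above height $r + N + 1$, enclosed by a circle in the level set rather than an arc ending on $\partial D$, separates $a$ from $b$ inside $C$. The standard way to handle this is to take $D$ to be of minimal area, so that any such interior peak (or analogous interior pit) could be excised and replaced with a disc filling its boundary circle at smaller area, contradicting minimality. Care is also needed to verify that the ``diversion'' construction can be performed consistently along all above-regions simultaneously, which again reduces to a planar topology argument about arcs cutting a disk.
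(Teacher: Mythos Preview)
Your approach is essentially the standard argument that the paper is invoking. The paper's entire proof is the one sentence ``Take $\varepsilon_0$ to be the length of the longest defining relator in a presentation corresponding to $X$''; your van Kampen diagram plus collar analysis is exactly how one unpacks that sentence, and your choice $\varepsilon_0 \sim N$ matches theirs.

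Your stated main obstacle, however, is not an obstacle, and the minimal-area fix you propose for it is both unnecessary and ineffective. You are walking along $q \subset \partial D$; the only above-regions you ever need to divert around are those that $q$ actually enters, and such a region necessarily meets $\partial D$. An interior peak is by definition disjoint from $\partial D$ and hence from $q$: you never attempt to pass through it, and the level-set arcs you divert along sit in $\tilde f^{-1}(r+N+1) \subset C$ regardless of whether they happen to encircle such a peak. So your own diversion construction already produces a path in $C$ from $a$ to $b$, and there is nothing to rule out. Minimal area would not help in any case: the function $\tilde f$ is pulled back from $X$, and reducing the combinatorial area of $D$ imposes no constraint on the $f$-values of interior vertices.

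The one place that does require a moment of care is checking that when $q$ enters a high region $H$ at $x_0$ and last exits at $x_1$, there really is a level arc in $\partial H$ joining them; this is a routine planar-separation fact (both points lie on the frontier component of $H$ that borders the component of $D\setminus H$ containing $p_0$). If you prefer to avoid continuous level sets altogether, there is a purely combinatorial variant: call a $2$-cell of $D$ \emph{mixed} if it has a vertex with $f \leq r$ and a vertex with $f \geq r$; all vertices of a mixed cell then lie in $[r-N/2,\,r+N/2]$. Cells with all vertices $>r$ touch $\partial D$ only along $q$, cells with all vertices $<r$ only along $p_0$, and these two families share no vertex; a planar duality argument then gives an edge path from $a$ to $b$ through mixed cells only, yielding $\varepsilon_0 = N/2$ and sidestepping the collar picture.
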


\begin{proof}
    This is an easy consequence of $G$ being finitely presented. Take $\varepsilon_0 > 0$ to be the length of the longest defining relator in a presentation corresponding to $X$.
\end{proof}

Recall the definition of a geodesic which is normal to a given subgraph (Definition~\ref{def:normal}). From now on, we will refer to a geodesic $\rho$ which is normal to $\gamma$ as simply a \textit{normal}. 
The following lemma is an easy consequence of the fact that $G$ is one-ended.

\begin{lemma}\label{lem:normals-exist}
    Fix $m > r \geq 0$.  
    Let $U \subset X \setminus N_r(\gamma)$ denote the unique deep component. Then there exists infinitely many distinct normals of length $m$ which terminate in $U$. 
\end{lemma}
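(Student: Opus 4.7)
The plan is to reduce the statement to showing that the "sphere intersection" $T_m := U \cap S_m(\gamma)$ is infinite. Indeed, given any $v \in T_m$, pick a closest point $y \in \gamma$ and any geodesic $\rho : [0,m] \to X$ from $y$ to $v$. Then $\rho$ is automatically normal to $\gamma$: if $\dist(\rho(t), \gamma) < t$ for some intermediate $t$, concatenating a shortcut from $\rho(t)$ back to $\gamma$ with the tail $\rho|_{[t,m]}$ would yield a path from $\gamma$ to $v$ of length strictly less than $m$, contradicting $\dist(v, \gamma) = m$. Since distinct $v \in T_m$ yield normals with distinct terminal vertices, infinitely many such $v$ produce infinitely many distinct normals of length $m$ ending in $U$.

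Suppose for contradiction that $T_m$ is finite. Since $U$ is the deep component, $U_{>m} := \{v \in U : \dist(v, \gamma) > m\}$ is infinite. Removing the finite set $T_m$ from the locally finite connected graph $U$ produces only finitely many components, since every component of $U \setminus T_m$ must touch the finite set $\partial T_m$. Moreover, a discrete intermediate value argument along edges (whose endpoints' heights differ by at most one) shows that each component of $U \setminus T_m$ lies entirely in $U_{>m}$ or entirely in $\{v \in U : \dist(v, \gamma) < m\}$. Hence some component $C_\infty \subset U_{>m}$ is infinite.

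I now claim $C_\infty$ is a connected component of $X \setminus T_m$ with external boundary contained in $T_m$. Every $v \in C_\infty$ has height at least $m+1 > r$, so each of its $X$-neighbours has height at least $m > r$ and thus lies in $X \setminus N_r(\gamma)$; being adjacent to $U$, such a neighbour lies in $U$. If the neighbour has height exactly $m$ it belongs to $T_m$; otherwise it belongs to $U \setminus T_m$, and adjacency to $v$ in $U \setminus T_m$ forces it into $C_\infty$. Thus the external boundary of $C_\infty$ sits in $T_m$, and $C_\infty$ is a component of $X \setminus T_m$. On the other hand, $\gamma$ lies entirely in $X \setminus T_m$ (since $\gamma$ has height $0$ and $T_m$ height $m$) and lies in a different component from $C_\infty$: any path joining them must cross the height-$m$ sphere, and any such crossing vertex in the path lies in $T_m$ by the boundary analysis above. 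We have produced two infinite components of $X \setminus T_m$ separated by the finite set $T_m$, contradicting the one-endedness of $G$.

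The main obstacle, and the one point that requires care, is the worry that a path might sneak from $\gamma$ to $C_\infty$ through a shallow (bounded) component of $X \setminus N_r(\gamma)$ that happens to reach height $m$; this is precisely ruled out by the boundary analysis, which shows that the only way to enter $C_\infty$ from outside is through a vertex of $T_m$.
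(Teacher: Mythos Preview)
Your argument is correct. The paper itself does not supply a proof of this lemma, remarking only that it ``is an easy consequence of the fact that $G$ is one-ended''; your write-up fills in those details in a natural way. The reduction to showing that $T_m = U \cap S_m(\gamma)$ is infinite, the observation that a geodesic from any $v \in T_m$ back to a nearest point of $\gamma$ is automatically a normal, and the finite-separator contradiction with one-endedness are all sound. The boundary analysis showing that every $X$-neighbour of a vertex in $C_\infty$ lies in $C_\infty \cup T_m$ is exactly the right way to handle the worry you flag about shallow components. One cosmetic point: the argument implicitly treats $m$ as an integer so that $S_m(\gamma)$ contains vertices; this is harmless (replace $m$ by $\lceil m \rceil$ and truncate the resulting normals), but is worth a word.
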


In particular, we can find normals which are arbitrarily far apart from each other. 
Next, we note that almost constant-height paths are coarsely perpendicular to normals, with very controlled constants.

\begin{lemma}\label{lem:constant-height-coarse-perp}
    Let $r, \varepsilon > 0$. Let $\rho \subset X$ be a normal to $\gamma$ and let $x_0 \in \rho$ be such that $\dist(x_0, \gamma) = r$. Given $q \subset A_{r\pm \varepsilon}(\gamma)$, we have that $q$ is $L$-coarsely perpendicular to $\rho$ at $x_0$, where $L = \max\{2,\varepsilon\}$. 
\end{lemma}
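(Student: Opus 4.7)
The plan is to exploit the defining property of the normal $\rho$, namely that heights above $\gamma$ along $\rho$ are exact, in order to convert the height constraint on $q$ into a coarse perpendicularity bound. Fix an arbitrary $b \in q$. Since $\rho$ is a normal to $\gamma$, parametrising at unit speed from the basepoint on $\gamma$, every point of $\rho$ can be written as $\rho(t)$ with $\dist(\rho(t),\gamma) = t$. In particular $x_0 = \rho(r)$. Pick a nearest point $y = \rho(t) \in \rho$ to $b$, so that $\dist(b,y) = \dist(b,\rho)$.

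Next I would use the triangle inequality twice, together with the hypothesis $r - \varepsilon \leq \dist(b,\gamma) \leq r + \varepsilon$, to show $|t - r| \leq \dist(b,\rho) + \varepsilon$. Indeed, from $t = \dist(y,\gamma) \leq \dist(y,b) + \dist(b,\gamma)$ I get $t - r \leq \dist(b,\rho) + \varepsilon$, and from $\dist(b,\gamma) \leq \dist(b,y) + \dist(y,\gamma) = \dist(b,\rho) + t$ I get $r - t \leq \dist(b,\rho) + \varepsilon$.

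Because $\rho$ is a geodesic and both $y$ and $x_0$ lie on $\rho$ at parameters $t$ and $r$ respectively, $\dist(y,x_0) = |t - r|$. Then one more triangle inequality gives
\[
\dist(b, x_0) \leq \dist(b, y) + \dist(y, x_0) \leq 2\dist(b,\rho) + \varepsilon.
\]
Setting $L = \max\{2,\varepsilon\}$, we have $2 \leq L$ and $\varepsilon \leq L$, so $\dist(b,x_0) \leq L\dist(b,\rho) + L$, which is precisely the $L$-coarse perpendicularity of $q$ to $\rho$ at $x_0$.

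There is no real obstacle here; the statement is essentially a two-line triangle inequality exercise once one unwinds the definitions. The only mild subtlety is remembering that $\rho$ being normal to $\gamma$ is a stronger condition than merely being a geodesic meeting $\gamma$ -- it is what lets us identify arc length along $\rho$ with distance from $\gamma$ and thereby equate $\dist(y,x_0)$ with $|\dist(y,\gamma) - \dist(x_0,\gamma)|$.
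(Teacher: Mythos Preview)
Your argument is correct and is precisely the triangle-inequality computation the paper has in mind; the paper's own proof is the single sentence ``This follows immediately from the triangle inequality,'' and you have simply spelled out the details.
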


\begin{proof}
    This follows immediately from the triangle inequality. 
\end{proof}

We now proceed with our general construction.

\begin{lemma}\label{lem:no-split}
    Let $G$ be a one-ended finitely presented group which is not a virtual surface group, and suppose that $G$ does not split over a two-ended subgroup. Then $G$ is not asymptotically minor-excluded. 
\end{lemma}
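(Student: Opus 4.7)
The plan is to show that for each $K > 0$ and $m \geq 2$, $X$ contains a $K$-fat $K_m$-minor; this suffices, since every finite graph is a subgraph of $K_m$ for some $m$, and a $K$-fat $K_m$-minor of $X$ yields a $K$-fat $H$-minor for each $H \subseteq K_m$ by discarding unwanted edge paths. The branch sets $U_i$ will be $m$ pairwise-disjoint normals $\rho_1, \ldots, \rho_m$ to the fixed bi-infinite geodesic $\gamma$, and the edge path $P_{\{i,j\}}$ will be a ``horizontal'' path at a prescribed height $h_{\{i,j\}}$, obtained by joining $\rho_i$ to $\rho_j$ via Lemma~\ref{lem:constant-height-paths} and then rerouting around every intervening $\rho_k$ using the diversion lemma (Lemma~\ref{lem:diversion}).

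Fix $K > 0$ and $m \geq 2$. Let $\varepsilon_0$ be the constant of Lemma~\ref{lem:constant-height-paths}, $L := \max\{2, \varepsilon_0\}$, and $R := R(L,K)$ from Lemma~\ref{lem:diversion}. Choose a gap constant $D$ much larger than $R + K + \varepsilon_0$, and heights $5R < h_1 < \cdots < h_N$ (with $N = \binom{m}{2}$) whose consecutive differences all exceed $D$; assign a distinct height $h_e$ to each edge $e$ of $K_m$. Using Lemma~\ref{lem:normals-exist} (which produces infinitely many normals, hence in particular normals whose basepoints on $\gamma$ are arbitrarily far apart), pick $m$ normals $\rho_i$ of length $h_N + 5R$ terminating in the unique deep component of $X \setminus N_{h_N + 5R - 1}(\gamma)$, with basepoints pairwise at distance greater than $2(h_N + 5R) + D$. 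This forces $\dist(\rho_i, \rho_j) > D$ for all $i \neq j$. Set $U_i := \rho_i$.

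For each edge $e = \{i,j\}$, we construct $P_e$ as follows. Since $G$ is one-ended, the points $\rho_i(h_e), \rho_j(h_e)$ both lie in the unique deep component of $X \setminus N_{h_e - 1}(\gamma)$, so Lemma~\ref{lem:constant-height-paths} produces a path $q_e \subset A_{h_e \pm \varepsilon_0}(\gamma)$ connecting them. By Lemma~\ref{lem:constant-height-coarse-perp}, $q_e$ is $L$-coarsely perpendicular to every normal $\rho_k$ at the point $\rho_k(h_e)$. For each $k \neq i, j$ such that $q_e$ enters $N_K(\rho_k)$, apply Lemma~\ref{lem:diversion} at $x_0 := \rho_k(h_e)$ (which lies at distance $\geq 5R$ from both endpoints of $\rho_k$ by our length choice) to replace the offending portion of $q_e$ with a path in $N_R(\rho_k(h_e)) \setminus N_K(\rho_k)$. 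The resulting path $P_e$ runs from $\rho_i(h_e)$ to $\rho_j(h_e)$ while avoiding $N_K(\rho_k)$ for all $k \neq i, j$.

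Verification of the $K$-fat minor conditions is largely routine bookkeeping: the $\rho_i$ are pairwise $>K$-apart by $D > K$; each $P_e$ is contained in $A_{h_e \pm (R + \varepsilon_0)}(\gamma)$, and distinct heights differ by more than $D \gg 2(R + \varepsilon_0) + K$, so edge paths are pairwise $>K$-apart; each $P_e$ avoids $N_K(\rho_k)$ for $k \neq i,j$ by design, and meets $U_i, U_j$ precisely at $\rho_i(h_e), \rho_j(h_e)$. The main obstacle is making sure the diversions interact cleanly: a diversion around $\rho_k$ stays inside $N_R(\rho_k(h_e))$, and this ball is disjoint from $N_K(\rho_{k'})$ for $k' \neq k$ provided $\dist(\rho_k, \rho_{k'}) > R + K$ (true since $D > R + K$), and disjoint from $A_{h_{e'} \pm (R + \varepsilon_0)}(\gamma)$ for $h_{e'} \neq h_e$ provided $|h_e - h_{e'}| > 2R + \varepsilon_0$ (true since $D$ is large). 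A minor subtlety is that $q_e$ could cross $N_K(\rho_k)$ multiple times inside $N_R(\rho_k(h_e))$; this is handled by applying Lemma~\ref{lem:diversion} to the \emph{first}-entry and \emph{last}-exit portions and replacing the entire intervening segment with the diverted path.
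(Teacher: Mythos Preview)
Your proposal is correct and follows essentially the same approach as the paper: branch sets are normals to a fixed bi-infinite geodesic $\gamma$, edge paths are almost-constant-height paths obtained from Lemma~\ref{lem:constant-height-paths} at distinct heights, and the diversion lemma reroutes each edge path around the normals it is not supposed to meet. Your constants and verification are slightly more explicit (e.g.\ tracking that the modified path stays in $A_{h_e \pm (R+\varepsilon_0)}(\gamma)$, and that diversion balls at different normals are disjoint), but the construction is identical in substance.
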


\begin{proof}
    Fix integers, $n, K > 0$.
    Fix $\varepsilon_0 > 0$ as in Lemma~\ref{lem:constant-height-paths}. Let $L = \max\{2, \varepsilon_0\}$. Choose $R = R(L , K)> 0$ as in the Diversion Lemma (\ref{lem:diversion}). Let $\gamma \subset X$ be some choice of bi-infinite geodesic. 

    Write $m = \tbinom n 2$. Let $h = 3mR$. Given $i \in \{1, \ldots, m\}$, let 
    $
    h_i = (3i - \tfrac3 2)R
    $. 
    Note that $R < h_i < h-R$ for all $i$, and $|h_i - h_j| > 2R$ for all $i \neq j$. 
    Let $U$ denote the unique deep component of $X \setminus N_h(\gamma)$. By Lemma~\ref{lem:normals-exist}, let $\rho_1, \ldots, \rho_n$ be a collection of normals of $\gamma$ of length $h$, all terminating in $U$. Since we can choose the $\rho_i$ to be arbitrarily far apart, we may ensure that 
    $
    N_{2R}(\rho_i) \cap N_{2R}(\rho_j) = \emptyset
    $ 
    for all $i \neq j$. Without loss of generality, assume that $\rho_i(0) \in \gamma$ for all $i$. 
    
    Choose some ordering of the set of pairs 
    $$
    S = \{(i, j ) : 1\leq i < j \leq m\},
    $$
    say $S = \{P_1, \ldots, P_m\}$, where $P_i = (x_i, y_i)$. 
    
    We will now attempt to construct a $K_n$-minor na\"ively, before modifying it into a $K$-fat minor. For each $i \in \{1, \ldots, m\}$, apply Lemma~\ref{lem:constant-height-paths} and let $q_i$ be a path connecting $\rho_{x_i}(h_i)$ to $\rho_{y_i}(h_i)$ such that 
    $$
    q_i \subset A_{h_i\pm\varepsilon_0}(\gamma). 
    $$
    We will modify each $q_i$ such that it avoids the $K$-neighbourhoods of all $\rho_j$ where $j \not\in\{x_i,y_i\}$. Once this is achieved, the union of the $\rho_i$ and the $q_j$ will form a $K$-fat $K_n$-minor, where the $\rho_i$ are the branch sets and the $q_j$ are the edge paths. 

    To ease notation, let us fix $i$ and write $q = q_i$. Write $\rho_+ = \rho_{x_i}$, and $\rho_- = \rho_{y_i}$.
    Suppose there exists some $\rho_j$ distinct from $\rho_+$, $\rho_-$ such that $q$ intersects $N_K(\rho_j)$. Write $x_0 = \rho_j(h_i)$. Parameterise $q$ such that it begins at $\rho_+$ and terminates at $\rho_-$. Let $q_+ \subset q$ denote the very first contiguous subpath of $q$ which is contained in $N_R(x_0)$, begins in $S_R(x_0)$ and terminates in $N_K(\rho_j)$. Similarly, let $q_- \subset q$ be the very last contiguous subpath of $q$ which is contained in $N_R(x_0)$, begins in $N_K(\rho_j)$ and terminates in $S_R(x_0)$. Let $a \in S_R(x_0)$ be the initial vertex of $q_+$ and $b \in S_R(x_0)$ be the terminal vertex of $q_-$. By the diversion lemma (\ref{lem:diversion}), there is a path through $N_R(x_0)$ connecting $a$ to $b$ which avoids $N_K(\rho_j)$. We replace the midsection of $q$ with this new path. 

    If we repeat this process at every one of the `bad intersections', it is clear that the resulting figure is a $K$-fat $K_n$-minor. See Figure~\ref{fig:main-construction} for a cartoon of this construction. The proposition follows. 
\end{proof}

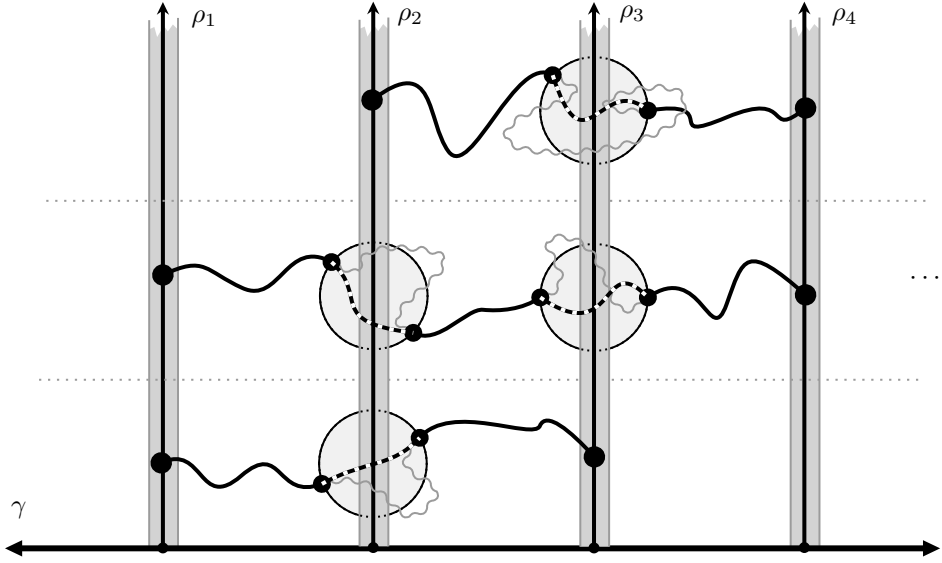
\begin{figure}
    \centering

\tikzset{every picture/.style={line width=0.75pt}} 

\begin{tikzpicture}[x=0.75pt,y=0.75pt,yscale=-1,xscale=1]

\draw  [draw opacity=0][fill={rgb, 255:red, 211; green, 211; blue, 211 }  ,fill opacity=1 ] (181.5,25) -- (184,21.5) -- (186.5,24) -- (186.56,284.2) -- (172.18,284.2) -- (172,24) -- (174.75,22.75) -- (176.13,27.82) -- (179.5,22.25) -- cycle ;
\draw [line width=1.5]    (179.02,14.2) -- (179.02,284.8) ;
\draw [shift={(179.02,284.8)}, rotate = 90] [color={rgb, 255:red, 0; green, 0; blue, 0 }  ][fill={rgb, 255:red, 0; green, 0; blue, 0 }  ][line width=1.5]      (0, 0) circle [x radius= 1.74, y radius= 1.74]   ;
\draw [shift={(179.02,10.2)}, rotate = 90] [fill={rgb, 255:red, 0; green, 0; blue, 0 }  ][line width=0.08]  [draw opacity=0] (6.43,-3.09) -- (0,0) -- (6.43,3.09) -- (4.27,0) -- cycle    ;
\draw [color={rgb, 255:red, 155; green, 155; blue, 155 }  ,draw opacity=1 ]   (172.18,19.33) -- (172.18,284.2) ;
\draw [color={rgb, 255:red, 155; green, 155; blue, 155 }  ,draw opacity=1 ]   (186.56,18.2) -- (186.56,284.2) ;
\draw  [draw opacity=0][fill={rgb, 255:red, 211; green, 211; blue, 211 }  ,fill opacity=1 ] (501.7,24.8) -- (504.2,21.3) -- (506.7,23.8) -- (506.76,284) -- (492.38,284) -- (492.2,23.8) -- (494.95,22.55) -- (496.33,27.62) -- (499.7,22.05) -- cycle ;
\draw [line width=1.5]    (499.22,14) -- (499.22,284.6) ;
\draw [shift={(499.22,284.6)}, rotate = 90] [color={rgb, 255:red, 0; green, 0; blue, 0 }  ][fill={rgb, 255:red, 0; green, 0; blue, 0 }  ][line width=1.5]      (0, 0) circle [x radius= 1.74, y radius= 1.74]   ;
\draw [shift={(499.22,10)}, rotate = 90] [fill={rgb, 255:red, 0; green, 0; blue, 0 }  ][line width=0.08]  [draw opacity=0] (6.43,-3.09) -- (0,0) -- (6.43,3.09) -- (4.27,0) -- cycle    ;
\draw [color={rgb, 255:red, 155; green, 155; blue, 155 }  ,draw opacity=1 ]   (492.38,19.13) -- (492.38,284) ;
\draw [color={rgb, 255:red, 155; green, 155; blue, 155 }  ,draw opacity=1 ]   (506.76,18) -- (506.76,284) ;
\draw [color={rgb, 255:red, 155; green, 155; blue, 155 }  ,draw opacity=1 ] [dash pattern={on 0.84pt off 2.51pt}]  (117.2,200.2) -- (557.2,200.2) ;
\draw [color={rgb, 255:red, 155; green, 155; blue, 155 }  ,draw opacity=1 ] [dash pattern={on 0.84pt off 2.51pt}]  (120.8,110.2) -- (560.8,110.2) ;
\draw  [fill={rgb, 255:red, 241; green, 241; blue, 241 }  ,fill opacity=1 ] (257.42,157.99) .. controls (257.42,143.16) and (269.45,131.13) .. (284.28,131.13) .. controls (299.11,131.13) and (311.13,143.16) .. (311.13,157.99) .. controls (311.13,172.82) and (299.11,184.84) .. (284.28,184.84) .. controls (269.45,184.84) and (257.42,172.82) .. (257.42,157.99) -- cycle ;
\draw  [fill={rgb, 255:red, 241; green, 241; blue, 241 }  ,fill opacity=1 ] (367.4,158.79) .. controls (367.4,143.96) and (379.42,131.93) .. (394.25,131.93) .. controls (409.08,131.93) and (421.11,143.96) .. (421.11,158.79) .. controls (421.11,173.62) and (409.08,185.64) .. (394.25,185.64) .. controls (379.42,185.64) and (367.4,173.62) .. (367.4,158.79) -- cycle ;
\draw [line width=2.25]    (105.2,285) -- (562,285) ;
\draw [shift={(567,285)}, rotate = 180] [fill={rgb, 255:red, 0; green, 0; blue, 0 }  ][line width=0.08]  [draw opacity=0] (8.57,-4.12) -- (0,0) -- (8.57,4.12) -- cycle    ;
\draw [shift={(100.2,285)}, rotate = 0] [fill={rgb, 255:red, 0; green, 0; blue, 0 }  ][line width=0.08]  [draw opacity=0] (8.57,-4.12) -- (0,0) -- (8.57,4.12) -- cycle    ;
\draw  [fill={rgb, 255:red, 241; green, 241; blue, 241 }  ,fill opacity=1 ] (257,242.39) .. controls (257,227.56) and (269.02,215.53) .. (283.85,215.53) .. controls (298.68,215.53) and (310.71,227.56) .. (310.71,242.39) .. controls (310.71,257.22) and (298.68,269.24) .. (283.85,269.24) .. controls (269.02,269.24) and (257,257.22) .. (257,242.39) -- cycle ;
\draw  [fill={rgb, 255:red, 241; green, 241; blue, 241 }  ,fill opacity=1 ] (367.4,64.79) .. controls (367.4,49.96) and (379.42,37.93) .. (394.25,37.93) .. controls (409.08,37.93) and (421.11,49.96) .. (421.11,64.79) .. controls (421.11,79.62) and (409.08,91.64) .. (394.25,91.64) .. controls (379.42,91.64) and (367.4,79.62) .. (367.4,64.79) -- cycle ;
\draw  [draw opacity=0][fill={rgb, 255:red, 211; green, 211; blue, 211 }  ,fill opacity=1 ] (286.5,24.8) -- (289,21.3) -- (291.5,23.8) -- (291.56,284) -- (277.18,284) -- (277,23.8) -- (279.75,22.55) -- (281.13,27.62) -- (284.5,22.05) -- cycle ;
\draw [line width=1.5]    (284.02,14) -- (284.02,284.6) ;
\draw [shift={(284.02,284.6)}, rotate = 90] [color={rgb, 255:red, 0; green, 0; blue, 0 }  ][fill={rgb, 255:red, 0; green, 0; blue, 0 }  ][line width=1.5]      (0, 0) circle [x radius= 1.74, y radius= 1.74]   ;
\draw [shift={(284.02,10)}, rotate = 90] [fill={rgb, 255:red, 0; green, 0; blue, 0 }  ][line width=0.08]  [draw opacity=0] (6.43,-3.09) -- (0,0) -- (6.43,3.09) -- (4.27,0) -- cycle    ;
\draw [color={rgb, 255:red, 155; green, 155; blue, 155 }  ,draw opacity=1 ]   (277.18,19.13) -- (277.18,284) ;
\draw [color={rgb, 255:red, 155; green, 155; blue, 155 }  ,draw opacity=1 ]   (291.56,18) -- (291.56,284) ;
\draw  [draw opacity=0][fill={rgb, 255:red, 211; green, 211; blue, 211 }  ,fill opacity=1 ] (396.7,25) -- (399.2,21.5) -- (401.7,24) -- (401.76,284.2) -- (387.38,284.2) -- (387.2,24) -- (389.95,22.75) -- (391.33,27.82) -- (394.7,22.25) -- cycle ;
\draw [line width=1.5]    (394.22,14.2) -- (394.22,284.8) ;
\draw [shift={(394.22,284.8)}, rotate = 90] [color={rgb, 255:red, 0; green, 0; blue, 0 }  ][fill={rgb, 255:red, 0; green, 0; blue, 0 }  ][line width=1.5]      (0, 0) circle [x radius= 1.74, y radius= 1.74]   ;
\draw [shift={(394.22,10.2)}, rotate = 90] [fill={rgb, 255:red, 0; green, 0; blue, 0 }  ][line width=0.08]  [draw opacity=0] (6.43,-3.09) -- (0,0) -- (6.43,3.09) -- (4.27,0) -- cycle    ;
\draw [color={rgb, 255:red, 155; green, 155; blue, 155 }  ,draw opacity=1 ]   (387.38,19.33) -- (387.38,284.2) ;
\draw [color={rgb, 255:red, 155; green, 155; blue, 155 }  ,draw opacity=1 ]   (401.76,18.2) -- (401.76,284.2) ;
\draw [line width=1.5]    (178.2,242) .. controls (199.6,231.8) and (196.8,267) .. (218,248.6) .. controls (239.2,230.2) and (235.2,261.4) .. (258.4,252.6) ;
\draw [shift={(178.2,242)}, rotate = 334.52] [color={rgb, 255:red, 0; green, 0; blue, 0 }  ][fill={rgb, 255:red, 0; green, 0; blue, 0 }  ][line width=1.5]      (0, 0) circle [x radius= 4.36, y radius= 4.36]   ;
\draw [line width=1.5]    (394.4,239) .. controls (382,225.4) and (370.4,215.8) .. (367.2,223.4) .. controls (364,231) and (322.8,211) .. (307.2,229.4) ;
\draw [shift={(394.4,239)}, rotate = 227.64] [color={rgb, 255:red, 0; green, 0; blue, 0 }  ][fill={rgb, 255:red, 0; green, 0; blue, 0 }  ][line width=1.5]      (0, 0) circle [x radius= 4.36, y radius= 4.36]   ;
\draw  [dash pattern={on 0.75pt off 1.5pt}] (257.42,157.99) .. controls (257.42,143.16) and (269.45,131.13) .. (284.28,131.13) .. controls (299.11,131.13) and (311.13,143.16) .. (311.13,157.99) .. controls (311.13,172.82) and (299.11,184.84) .. (284.28,184.84) .. controls (269.45,184.84) and (257.42,172.82) .. (257.42,157.99) -- cycle ;
\draw  [dash pattern={on 0.75pt off 1.5pt}] (367.4,158.79) .. controls (367.4,143.96) and (379.42,131.93) .. (394.25,131.93) .. controls (409.08,131.93) and (421.11,143.96) .. (421.11,158.79) .. controls (421.11,173.62) and (409.08,185.64) .. (394.25,185.64) .. controls (379.42,185.64) and (367.4,173.62) .. (367.4,158.79) -- cycle ;
\draw  [dash pattern={on 0.75pt off 1.5pt}] (257,242.39) .. controls (257,227.56) and (269.02,215.53) .. (283.85,215.53) .. controls (298.68,215.53) and (310.71,227.56) .. (310.71,242.39) .. controls (310.71,257.22) and (298.68,269.24) .. (283.85,269.24) .. controls (269.02,269.24) and (257,257.22) .. (257,242.39) -- cycle ;
\draw  [dash pattern={on 0.75pt off 1.5pt}] (367.4,64.79) .. controls (367.4,49.96) and (379.42,37.93) .. (394.25,37.93) .. controls (409.08,37.93) and (421.11,49.96) .. (421.11,64.79) .. controls (421.11,79.62) and (409.08,91.64) .. (394.25,91.64) .. controls (379.42,91.64) and (367.4,79.62) .. (367.4,64.79) -- cycle ;
\draw [line width=1.5]    (179.02,147.5) .. controls (200.42,137.3) and (200.04,145.6) .. (218.82,154.1) .. controls (237.6,162.6) and (240.4,128.6) .. (263.2,141) ;
\draw [shift={(179.02,147.5)}, rotate = 334.52] [color={rgb, 255:red, 0; green, 0; blue, 0 }  ][fill={rgb, 255:red, 0; green, 0; blue, 0 }  ][line width=1.5]      (0, 0) circle [x radius= 4.36, y radius= 4.36]   ;
\draw [line width=1.5]    (499.82,157.5) .. controls (469.6,130.6) and (468.8,138.6) .. (460.4,162.2) .. controls (452,185.8) and (448,139.8) .. (421.11,158.79) ;
\draw [shift={(499.82,157.5)}, rotate = 221.67] [color={rgb, 255:red, 0; green, 0; blue, 0 }  ][fill={rgb, 255:red, 0; green, 0; blue, 0 }  ][line width=1.5]      (0, 0) circle [x radius= 4.36, y radius= 4.36]   ;
\draw [line width=1.5]    (499.82,63.5) .. controls (478.8,80.6) and (494.8,53.8) .. (460.4,68.2) .. controls (426,82.6) and (460,58.6) .. (421.11,64.79) ;
\draw [shift={(499.82,63.5)}, rotate = 140.87] [color={rgb, 255:red, 0; green, 0; blue, 0 }  ][fill={rgb, 255:red, 0; green, 0; blue, 0 }  ][line width=1.5]      (0, 0) circle [x radius= 4.36, y radius= 4.36]   ;
\draw [line width=1.5]    (283.42,59.5) .. controls (315.6,35) and (310.4,70.2) .. (321.6,85.4) .. controls (332.8,100.6) and (355.2,35.4) .. (373.6,47) ;
\draw [shift={(283.42,59.5)}, rotate = 322.72] [color={rgb, 255:red, 0; green, 0; blue, 0 }  ][fill={rgb, 255:red, 0; green, 0; blue, 0 }  ][line width=1.5]      (0, 0) circle [x radius= 4.36, y radius= 4.36]   ;
\draw [line width=1.5]    (304,176.6) .. controls (316.4,183.4) and (330.4,164.2) .. (338.8,165) .. controls (347.2,165.8) and (358.4,165) .. (367.4,158.79) ;
\draw [color={rgb, 255:red, 155; green, 155; blue, 155 }  ,draw opacity=1 ]   (263.2,141) .. controls (265.62,140.45) and (267.13,141.28) .. (267.74,143.5) .. controls (268.41,145.71) and (269.89,146.41) .. (272.2,145.61) .. controls (273.95,144.4) and (275.41,144.28) .. (276.59,145.25) .. controls (278.58,143.98) and (279,142.36) .. (277.83,140.4) .. controls (277.5,137.85) and (278.61,136.6) .. (281.15,136.65) .. controls (283.21,137.49) and (284.79,136.87) .. (285.89,134.79) .. controls (287.26,132.82) and (288.89,132.57) .. (290.76,134.02) .. controls (292.36,135.58) and (293.96,135.5) .. (295.57,133.79) .. controls (297.48,132.15) and (299.23,132.19) .. (300.8,133.92) .. controls (302.42,135.69) and (304.21,135.83) .. (306.17,134.33) .. controls (307.65,132.82) and (309.22,133.04) .. (310.89,134.98) .. controls (312.07,136.94) and (313.64,137.36) .. (315.6,136.23) .. controls (317.84,135.48) and (319.11,136.42) .. (319.41,139.04) .. controls (317.98,140.29) and (317.75,141.86) .. (318.73,143.76) .. controls (319,146.23) and (317.97,147.51) .. (315.66,147.59) .. controls (313.32,147.56) and (312.07,148.76) .. (311.9,151.18) .. controls (311.81,153.49) and (310.6,154.56) .. (308.27,154.37) .. controls (305.94,154.2) and (304.61,155.36) .. (304.3,157.87) .. controls (304.29,160.16) and (303.16,161.23) .. (300.93,161.09) .. controls (298.54,161.22) and (297.43,162.52) .. (297.62,164.99) .. controls (298.3,167.14) and (297.68,168.67) .. (295.75,169.58) .. controls (294.66,171.99) and (295.36,173.38) .. (297.86,173.76) .. controls (300.13,173.21) and (301.71,174.01) .. (302.58,176.16) -- (304,176.6) ;
\draw [color={rgb, 255:red, 155; green, 155; blue, 155 }  ,draw opacity=1 ]   (367.4,158.79) .. controls (368.47,156.74) and (370.07,156.18) .. (372.18,157.13) .. controls (374.55,157.68) and (375.88,156.7) .. (376.19,154.2) .. controls (375.46,152.16) and (376.02,150.59) .. (377.85,149.5) .. controls (379.28,147.57) and (378.94,145.99) .. (376.82,144.77) .. controls (374.62,143.9) and (373.89,142.39) .. (374.64,140.22) .. controls (375.25,137.89) and (374.44,136.42) .. (372.21,135.81) .. controls (369.94,134.86) and (369.43,133.31) .. (370.67,131.15) .. controls (372.33,131.25) and (373.54,130.35) .. (374.29,128.46) .. controls (375.98,126.73) and (377.75,126.77) .. (379.6,128.56) .. controls (380.83,130.43) and (382.34,130.75) .. (384.14,129.52) .. controls (386.45,128.7) and (388,129.52) .. (388.8,131.98) .. controls (388.87,134.22) and (390.03,135.32) .. (392.28,135.29) .. controls (394.63,135.62) and (395.57,136.96) .. (395.08,139.29) .. controls (394.39,141.5) and (395.14,142.99) .. (397.35,143.74) .. controls (399.54,144.66) and (400.17,146.21) .. (399.23,148.38) .. controls (398.3,150.7) and (398.89,152.32) .. (401,153.25) .. controls (403.09,154.1) and (403.69,155.68) .. (402.82,157.99) .. controls (401.96,160.12) and (402.7,161.55) .. (405.04,162.29) .. controls (407.21,162.28) and (408.48,163.31) .. (408.85,165.38) .. controls (410.83,167.05) and (412.48,166.77) .. (413.8,164.54) .. controls (414.17,162.38) and (415.55,161.43) .. (417.94,161.7) -- (421.11,158.79) ;
\draw [color={rgb, 255:red, 155; green, 155; blue, 155 }  ,draw opacity=1 ]   (373.6,47) .. controls (375.94,46.25) and (377.46,46.98) .. (378.15,49.19) .. controls (378.75,51.42) and (380.22,52.3) .. (382.57,51.81) .. controls (385.04,51.72) and (386.05,52.92) .. (385.6,55.42) .. controls (383.81,55.53) and (382.68,56.67) .. (382.21,58.85) .. controls (381.28,61.02) and (379.76,61.58) .. (377.63,60.54) .. controls (375.41,59.49) and (373.83,59.97) .. (372.88,62) .. controls (371.68,64.12) and (370.07,64.63) .. (368.05,63.53) .. controls (365.96,62.5) and (364.34,63.1) .. (363.19,65.33) .. controls (362.43,67.48) and (361,68.15) .. (358.89,67.34) .. controls (356.48,66.83) and (355.06,67.77) .. (354.62,70.15) .. controls (354.66,72.39) and (353.53,73.63) .. (351.23,73.88) .. controls (348.88,74.93) and (348.49,76.49) .. (350.04,78.54) .. controls (352.05,78.72) and (353.17,79.91) .. (353.4,82.11) .. controls (354.3,84.34) and (355.8,84.92) .. (357.91,83.85) .. controls (359.98,82.63) and (361.64,82.98) .. (362.89,84.89) .. controls (364.4,86.78) and (366.16,86.98) .. (368.16,85.49) .. controls (369.77,83.92) and (371.31,84) .. (372.8,85.73) .. controls (374.41,87.43) and (376.05,87.44) .. (377.73,85.77) .. controls (379.38,84.06) and (381.1,84.01) .. (382.89,85.6) .. controls (384.77,87.15) and (386.54,87.03) .. (388.21,85.23) .. controls (389.38,83.44) and (390.96,83.28) .. (392.93,84.73) .. controls (394.96,86.14) and (396.54,85.93) .. (397.67,84.08) .. controls (399.18,82.15) and (400.98,81.84) .. (403.05,83.15) .. controls (404.72,84.51) and (406.26,84.18) .. (407.67,82.17) .. controls (408.98,80.14) and (410.68,79.7) .. (412.78,80.86) .. controls (414.92,81.97) and (416.55,81.46) .. (417.66,79.35) .. controls (418.63,77.24) and (420.15,76.67) .. (422.22,77.65) .. controls (424.34,78.55) and (425.73,77.92) .. (426.4,75.75) .. controls (427.16,73.45) and (428.69,72.57) .. (430.98,73.11) .. controls (433.33,73.46) and (434.58,72.48) .. (434.72,70.16) .. controls (434.61,67.83) and (435.66,66.52) .. (437.89,66.23) .. controls (440.12,65.36) and (440.64,63.81) .. (439.45,61.58) .. controls (437.76,60.29) and (437.39,58.72) .. (438.36,56.89) .. controls (438.08,54.38) and (436.66,53.44) .. (434.1,54.05) .. controls (432.26,55.3) and (430.64,54.99) .. (429.24,53.13) .. controls (427.68,51.39) and (426.06,51.37) .. (424.39,53.07) .. controls (422.85,54.86) and (421.13,55.03) .. (419.24,53.58) .. controls (417.21,52.23) and (415.63,52.52) .. (414.51,54.45) .. controls (413.27,56.46) and (411.65,56.89) .. (409.64,55.72) .. controls (407.35,54.73) and (405.71,55.32) .. (404.72,57.5) .. controls (404.16,59.61) and (402.76,60.39) .. (400.53,59.83) .. controls (397.95,60.78) and (397.99,62.02) .. (400.64,63.57) .. controls (402.64,62.42) and (404.29,62.79) .. (405.58,64.66) .. controls (407.15,66.47) and (408.76,66.57) .. (410.42,64.97) .. controls (412.18,63.32) and (414.05,63.32) .. (416.03,64.97) .. controls (417.72,66.59) and (419.41,66.53) .. (421.11,64.79) -- (421.11,64.79) ;
\draw [color={rgb, 255:red, 155; green, 155; blue, 155 }  ,draw opacity=1 ]   (258.4,252.6) .. controls (259.86,250.49) and (261.49,250.09) .. (263.28,251.4) .. controls (265.17,252.77) and (266.95,252.54) .. (268.6,250.7) .. controls (270.08,248.99) and (271.64,249.01) .. (273.28,250.75) .. controls (274.75,252.64) and (276.46,252.96) .. (278.41,251.72) .. controls (280.55,250.71) and (282.08,251.35) .. (283.01,253.64) .. controls (283.5,255.89) and (284.85,256.81) .. (287.08,256.39) .. controls (289.39,256.18) and (290.59,257.29) .. (290.68,259.71) .. controls (290.57,262.02) and (291.72,263.26) .. (294.11,263.41) .. controls (296.4,263.42) and (297.55,264.56) .. (297.55,266.84) .. controls (298.22,269.29) and (299.75,270) .. (302.14,268.98) .. controls (303.23,267.21) and (304.81,266.78) .. (306.86,267.67) .. controls (309.18,268.09) and (310.53,267.02) .. (310.92,264.47) .. controls (310.73,262.28) and (311.78,261.11) .. (314.06,260.94) .. controls (316.4,260.17) and (316.94,258.65) .. (315.68,256.38) .. controls (313.69,255.88) and (312.72,254.56) .. (312.76,252.41) .. controls (312.05,250.06) and (310.61,249.28) .. (308.43,250.05) .. controls (306.1,250.74) and (304.59,249.93) .. (303.9,247.61) .. controls (303.77,245.42) and (302.61,244.27) .. (300.42,244.15) .. controls (298.34,242.72) and (298.25,241.1) .. (300.14,239.3) .. controls (302.25,238.59) and (303.05,237.08) .. (302.53,234.79) .. controls (302.21,232.47) and (303.3,231.16) .. (305.79,230.86) -- (307.2,229.4) ;
\draw [color={rgb, 255:red, 0; green, 0; blue, 0 }  ,draw opacity=1 ][line width=1.5]    (263.2,141) .. controls (282.4,158.6) and (259.6,171) .. (304,176.6) ;
\draw [shift={(304,176.6)}, rotate = 7.19] [color={rgb, 255:red, 0; green, 0; blue, 0 }  ,draw opacity=1 ][fill={rgb, 255:red, 0; green, 0; blue, 0 }  ,fill opacity=1 ][line width=1.5]      (0, 0) circle [x radius= 3.48, y radius= 3.48]   ;
\draw [shift={(263.2,141)}, rotate = 42.51] [color={rgb, 255:red, 0; green, 0; blue, 0 }  ,draw opacity=1 ][fill={rgb, 255:red, 0; green, 0; blue, 0 }  ,fill opacity=1 ][line width=1.5]      (0, 0) circle [x radius= 3.48, y radius= 3.48]   ;
\draw [color={rgb, 255:red, 0; green, 0; blue, 0 }  ,draw opacity=1 ][line width=1.5]    (258.4,252.6) .. controls (280.4,240.2) and (292.8,245) .. (307.2,229.4) ;
\draw [shift={(307.2,229.4)}, rotate = 312.71] [color={rgb, 255:red, 0; green, 0; blue, 0 }  ,draw opacity=1 ][fill={rgb, 255:red, 0; green, 0; blue, 0 }  ,fill opacity=1 ][line width=1.5]      (0, 0) circle [x radius= 3.48, y radius= 3.48]   ;
\draw [shift={(258.4,252.6)}, rotate = 330.59] [color={rgb, 255:red, 0; green, 0; blue, 0 }  ,draw opacity=1 ][fill={rgb, 255:red, 0; green, 0; blue, 0 }  ,fill opacity=1 ][line width=1.5]      (0, 0) circle [x radius= 3.48, y radius= 3.48]   ;
\draw [color={rgb, 255:red, 0; green, 0; blue, 0 }  ,draw opacity=1 ][line width=1.5]    (367.4,158.79) .. controls (406,185.4) and (397.2,133.8) .. (421.11,158.79) ;
\draw [shift={(421.11,158.79)}, rotate = 46.27] [color={rgb, 255:red, 0; green, 0; blue, 0 }  ,draw opacity=1 ][fill={rgb, 255:red, 0; green, 0; blue, 0 }  ,fill opacity=1 ][line width=1.5]      (0, 0) circle [x radius= 3.48, y radius= 3.48]   ;
\draw [shift={(367.4,158.79)}, rotate = 34.59] [color={rgb, 255:red, 0; green, 0; blue, 0 }  ,draw opacity=1 ][fill={rgb, 255:red, 0; green, 0; blue, 0 }  ,fill opacity=1 ][line width=1.5]      (0, 0) circle [x radius= 3.48, y radius= 3.48]   ;
\draw [color={rgb, 255:red, 0; green, 0; blue, 0 }  ,draw opacity=1 ][line width=1.5]    (373.6,47) .. controls (389.6,97.4) and (398,44.6) .. (421.11,64.79) ;
\draw [shift={(421.11,64.79)}, rotate = 41.14] [color={rgb, 255:red, 0; green, 0; blue, 0 }  ,draw opacity=1 ][fill={rgb, 255:red, 0; green, 0; blue, 0 }  ,fill opacity=1 ][line width=1.5]      (0, 0) circle [x radius= 3.48, y radius= 3.48]   ;
\draw [shift={(373.6,47)}, rotate = 72.39] [color={rgb, 255:red, 0; green, 0; blue, 0 }  ,draw opacity=1 ][fill={rgb, 255:red, 0; green, 0; blue, 0 }  ,fill opacity=1 ][line width=1.5]      (0, 0) circle [x radius= 3.48, y radius= 3.48]   ;
\draw [color={rgb, 255:red, 255; green, 255; blue, 255 }  ,draw opacity=1 ][line width=1.5]  [dash pattern={on 1.69pt off 2.76pt}]  (263.2,141) .. controls (282.4,158.6) and (259.6,171) .. (304,176.6) ;
\draw [color={rgb, 255:red, 255; green, 255; blue, 255 }  ,draw opacity=1 ][line width=1.5]  [dash pattern={on 1.69pt off 2.76pt}]  (258.4,252.6) .. controls (280.4,240.2) and (292.8,245) .. (307.2,229.4) ;
\draw [color={rgb, 255:red, 255; green, 255; blue, 255 }  ,draw opacity=1 ][line width=1.5]  [dash pattern={on 1.69pt off 2.76pt}]  (367.4,158.79) .. controls (406,185.4) and (397.2,133.8) .. (421.11,158.79) ;
\draw [color={rgb, 255:red, 255; green, 255; blue, 255 }  ,draw opacity=1 ][line width=1.5]  [dash pattern={on 1.69pt off 2.76pt}]  (373.6,47) .. controls (389.6,97.4) and (398,44.6) .. (421.11,64.79) ;

\draw (101.8,259.8) node [anchor=north west][inner sep=0.75pt]    {$\gamma $};
\draw (191.8,13) node [anchor=north west][inner sep=0.75pt]    {$\rho _{1}$};
\draw (294.6,12.2) node [anchor=north west][inner sep=0.75pt]    {$\rho _{2}$};
\draw (405.4,10.6) node [anchor=north west][inner sep=0.75pt]    {$\rho _{3}$};
\draw (511.4,11.8) node [anchor=north west][inner sep=0.75pt]    {$\rho _{4}$};
\draw (550.2,144.6) node [anchor=north west][inner sep=0.75pt]    {$\cdots $};

\end{tikzpicture}

    \caption{Constructing a fat $K_n$ in a group which doesn't split. Using the diversion lemma (\ref{lem:diversion}), the `wiggly' paths which break the fatness of the minor are replaced with the striped paths which avoid the $K$-neighbourhoods of the $\rho_i$.}
    \label{fig:main-construction}
\end{figure}

Finally, we can deduce our main result. 

\fp*

\begin{proof}
    If a finite index subgroup of $G$ admits a planar Cayley graph, then clearly $G$ is quasi-isometric to a planar graph. Since asymptotic minors are invariant under quasi-isometry, we conclude that $G$ is asymptotically minor-excluded. 

    Now, suppose that no finite index subgroup of $G$ admits a planar Cayley graph. 
    Let $X$ be a Cayley graph of $G$. 
    We first assume that $G$ is one-ended, and so $G$ is not a virtual surface group. If $G$ does not split over a two-ended subgroup, then by Lemma~\ref{lem:no-split} we are done. Thus, let us assume that $G$ splits as an amalgam $G \cong A \ast_C B$ or an HNN extension $G \cong A \ast _C$ where $C$ is two-ended. For the sake of clarity, we will assume that $G$ is an HNN extension; the amalgam case is identical. 

    First, note that $A$ is itself necessarily finitely presented \cite{haglund2021note}. By Dunwoody's accessibility theorem \cite{dunwoody1985accessibility}, either $A$ is virtually free or $A$ contains a one-ended, finitely presented subgroup $H$. If $H$ is a virtual surface group then we are done by Theorem~\ref{thm:surface-subgroup}. Otherwise, we may repeat the preceding argument with $H$ in place of $G$. We will return to this inductive step shortly.

    Suppose now that $A$ is virtually free. By a corollary of the combination theorem of Bestvina--Feighn \cite{bestvina1992combination, bestvina1996addendum}, if $G$ is not hyperbolic then $G$ contains some Baumslag--Solitar subgroup. Since $G$ is not virtually $\Z^2$, it follows from Proposition~\ref{lem:bs} and Theorem~\ref{thm:surface-subgroup} that $G$ is not asymptotically minor-excluded. Thus we assume that $G$ is hyperbolic. By a theorem of Wise \cite[Thm.~4.19]{wise2000subgroup} we have that $G$ is residually finite and virtually torsion-free, and thus $G$ contains a finite index subgroup $K$ isomorphic to a one-ended, hyperbolic graph of free groups with cyclic edge groups. By a theorem of Wilton \cite{wilton2012one}, we have that $K$ contains a one-ended subgroup $H$ of  infinite index. A theorem of Karrass and Solitar \cite{karrass1970subgroups} implies that $K$ is coherent\footnote{Recall that a finitely presented group is said to be \textit{coherent} if every finitely generated subgroup is finitely presented.} and so $H$ is finitely presented. If $H$ a virtual surface group, then we are once again done by Theorem~\ref{thm:surface-subgroup}, otherwise we repeat this argument with $H$ in place of $G$. 

    In each of the above cases considered, we have either concluded that $G$ is not asymptotically minor-excluded, or we have found an infinite-index, one-ended, finitely presented subgroup $H$ which is not a virtual surface group. We now repeat this process by iteratively passing to this subgroup at each stage. If this process never terminates then we find a descending chain 
    $$
    G \geq H_1 \geq H_{2} \geq \ldots \geq H_i \geq \ldots
    $$
    of one-ended subgroups, where $|H_i : H_{i+1}| = \infty$ for every $i \geq 1$. By Theorem~\ref{thm:descending-chain} we deduce that $G$ is not asymptotically minor-excluded. 

    Now, let us assume that $G$ is infinite-ended and does not virtually admit a planar Cayley graph.  
    By Dunwoody's accessibility theorem \cite{dunwoody1985accessibility}, $G$ splits as a graph of groups with finite edge groups and vertex groups with at most one end. Let $H_1, \ldots, H_n$ be the one-ended vertex groups of this splitting. There must be at least one such vertex group lest $G$ is virtually free. Since $G$ is not virtually free we have that $n \geq 1$. 
    If each $H_i$ contains a finite index subgroup $K_i$ with a planar Cayley graph, then each $H_i$ is residually finite and virtually torsion-free. It is an easy exercise to deduce that $G$ itself is also virtually torsion-free and so $G$ is virtually a free product of free and surface groups. It follows that $G$ virtually admits a planar Cayley graph, a contradiction. Thus, some $H_i$ must not virtually admit a planar Cayley graph, and the result follows from the one-ended case discussed above. 
\end{proof}

\subsection*{Acknowledgements}

I am grateful to Panos Papasoglu for many stimulating conversations on this topic, and to Misha Schmalian for highlighting the simple idea behind the construction in Figure~\ref{fig:circles}. Finally, I thank the referees for the careful reading and detailed feedback. This work was supported by the Heilbronn Institute for Mathematical Research.

\bibliographystyle{abbrv}
\bibliography{references}

\end{document}